\newtheorem{theorem}{Theorem}
\newtheorem{proposition}[theorem]{Proposition}
\newtheorem{lemma}[theorem]{Lemma}
\newtheorem{corollary}[theorem]{Corollary}
\theoremstyle{definition}
\newtheorem{remark}[theorem]{Remark}
\newcommand{\ls}{\leqslant}
\newcommand{\gs}{\geqslant}
\renewcommand{\Re}{\operatorname{Re}}
\renewcommand{\Im}{\operatorname{Im}}
\begin{document}

\title{\bf On the thin-shell conjecture for the Schatten classes}

\medskip

\author{Jordan Radke and Beatrice-Helen Vritsiou}

\date{}

\maketitle

\begin{abstract}
\small We study the thin-shell conjecture for the Schatten classes. In particular, we establish the conjecture
for the operator norm; we also improve on the best known bound for the Schatten classes,
due to Barthe and Cordero-Erausquin \cite{Barthe-Cordero-2013}, for a few more cases. 
Moreover, we show that a necessary condition for the conjecture to be true for any of the Schatten classes 
is a rather strong negative correlation property: this implies of course that, for the cases for which 
we already have the conjecture  (as for example for the operator norm),
but in fact also for all the cases for which we can get a better estimate 
than the one in \cite{Barthe-Cordero-2013}, this negative correlation property follows.
For the proofs, our starting point is 
techniques that were employed for the Schatten classes 
with regard to other problems in \cite{Konig-Meyer-Pajor-1998} and \cite{Guedon-Paouris-2007}.
\end{abstract}


\section{Introduction}

We work in real, finite-dimensional vector spaces (that can be identified with ${\mathbb R}^m$ for some $m\gs 1$)
which are equipped with a fixed Euclidean structure (or inner product $\langle\cdot, \cdot\rangle$). The Euclidean norm
in all these spaces is denoted by $\|\cdot\|_2$, and the unit Euclidean ball by $B_2^m$.
More generally, we denote by $\|\cdot\|_p$, $p\gs 1$, the $\ell_p$ norm of a vector in ${\mathbb R}^m$,
and by $B_p^m$ the corresponding unit ball.
A convex body $K$ is a compact, convex subset of the space with non-empty interior.
It is called symmetric if $x\in K$ implies $-x\in K$, and it is called centred if 
\begin{equation*}\int_K\langle x, y\rangle \,dx = 0\qquad \hbox{for all} \ y\in {\mathbb R}^m, \end{equation*}
where $dx$ denotes integration with respect to the Lebesgue measure. A convex body $K$ in ${\mathbb R}^m$ is called \emph{isotropic}
if: (i) it has Lebesgue volume 1, (ii) it is centred, and (iii) its covariance matrix is a multiple of the identity, or, in other words,
\begin{equation*} \int_K\langle x, y\rangle^2 \,dx = L_K^2 \|y\|_2^2\qquad \hbox{for all} \ y\in {\mathbb R}^m;\end{equation*}
the number $L_K$ appearing here is an invariant of the affine class of $K$ (that is, of the family of all images of $K$ under an invertible linear, or affine, transformation of ${\mathbb R}^m$), and is called the \emph{isotropic constant} of $K$. 

A famous open question in the asymptotic theory 
of convex bodies is the \emph{hyperplane conjecture} or \emph{isotropic constant conjecture}: 
in one of its many equivalent formulations, it asks whether, for all $m\gs 1$, 
the isotropic constants of all convex bodies in ${\mathbb R}^m$ can be bounded from above by a number which is independent of $m$
(it is already known that, for every convex body $K$ in ${\mathbb R}^m$, $L_K\gs L_{B_2^m} \simeq 1$).  
Bourgain \cite{Bourgain-1991} has shown that,
if $K \subset {\mathbb R}^m$, then 
$L_K\ls C_1\sqrt[4]{m}\log m$,
and Klartag \cite{Klartag-2006} has improved that to $L_K\ls C_2 \sqrt[4]{m}$,
where $C_1, C_2$ are absolute constants, independent of $m$ or $K$ 
(see also \cite{Klartag-EMilman-2012} for an alternative proof of the latter bound).

In \cite{Bobkov-Koldobsky-2003} another quantity for symmetric (or centred) convex bodies $K$ in ${\mathbb R}^m$ was introduced: 
\begin{equation*} \sigma_K^2 = m\,\frac{{\rm Var}_K\bigl(\|{\cal X}\|_2^2\bigr)}{\bigl[{\mathbb E}_K\bigl(\|{\cal X}\|_2^2\bigr)\bigr]^2},\end{equation*}
where ${\cal X}$ is a random vector distributed uniformly in 
$K$. The thin-shell conjecture is the question whether the quantity $\sigma_K$ is bounded from above by 
an absolute constant (namely a constant independent of the dimension $m$)
for all isotropic convex bodies $K$ in ${\mathbb R}^m$. 
The initial interest in this quantity stems from a long-understood principle going back to Sudakov \cite{Sudakov-1978}
and to Diaconis and Freedman \cite{Diaconis-Freedman-1984} that connects strong concentration of mass
with respect to an isotropic probability distribution on ${\mathbb R}^m$ to the existence of many almost-gaussian 
1-dimensional marginals of that distribution. In the case of convex bodies, a sharp quantitative version of this principle
was put forth by Anttila, Ball and Perissinaki \cite{Anttila-Ball-Perissinaki-2003}. In \cite{Klartag-2007a}
Klartag resolved the question of whether a uniform distribution over an isotropic convex body has 
gaussian marginals by employing exactly this principle 
after managing to estimate $\sigma_K^2$ as $o(m)$ for every isotropic convex body $K$ in ${\mathbb R}^m$ (see
also \cite{Fleury-Guedon-Paouris-2007}, and see the introduction of \cite{Klartag-2007a} and the references therein
for more background on the central limit problem for convex bodies). Shortly after, polynomially better (in the dimension) 
estimates for the quantities $\sigma_K$ appeared in \cite{Klartag-2007b} and in \cite{Fleury-2010b}. The best known
general estimate is now due to Guédon and Milman \cite{Guedon-Milman-2011} who showed that, for every
isotropic convex body $K$ in ${\mathbb R}^m$, $\sigma_K^2 \ls Cm^{2/3}$ for some absolute constant $C$.

The thin-shell conjecture is of course interesting in its own right, and it implies more about isotropic convex bodies
than merely the existence of almost gaussian marginals. It is closely related to two other central conjectures in the 
theory of isotropic convex bodies, the abovementioned hyperplane conjecture and the Poincaré 
(or Kannan-Lovász-Simonovits) conjecture: Eldan and Klartag \cite{Eldan-Klartag-2011} have proved
that the worst estimate we have for $\sigma_K$ for an isotropic convex body $K$ in ${\mathbb R}^m$
also serves as an upper bound for the isotropic constants of all convex bodies in ${\mathbb R}^m$, and hence that the thin-shell conjecture
would imply the hyperplane conjecture. On the other hand, 
the thin-shell conjecture is merely a special case of the Poincaré conjecture. 
It has thus been a rather surprising and extremely interesting development that, up to a logarithmic term at least, the thin-shell conjecture
is also equivalent to the Poincaré conjecture; this breakthrough result is due to Eldan \cite{Eldan-2013}.
We refer the reader to the book \cite{BGVV-book} e.g. for more details about these conjectures and the links between them.

Although, as we saw, the general estimates we have for the thin-shell conjecture are far from the conjectured ones, 
there are a few cases of special families of convex bodies for which the conjecture has been resolved optimally. The first 
such case is the family of $\ell_p$ balls: the thin-shell conjecture in this case follows from a subindependence property
established by Ball and Perissinaki \cite{Ball-Perissinaki-1998}, which implies that the cross terms we get 
when we expand the variance of the Euclidean norm are non-positive and hence can be ignored when
we are trying to bound $\sigma_{B_p^m}$ (see also \cite{Anttila-Ball-Perissinaki-2003} for an alternative and more simple 
proof of this subindependence property). 
For the same reason, that is, a subindependence property (see \cite{Pilipczuk-Wojtaszczyk-2008}),
the thin-shell conjecture follows in the broader class of generalised Orlicz balls as well.
Two more important cases are the family of unconditional isotropic convex bodies,
and the simplex, which Klartag has shown (\cite{Klartag-2009} and \cite{Klartag-2013} respectively) 
both satisfy the thin-shell conjecture (see also \cite{Barthe-Cordero-2013} for the case of the simplex).

\medskip

In this paper we study one more special family of convex bodies with respect to the thin-shell conjecture.
Let ${\cal M}_n({\mathbb C})$ denote the space of all $n\times n$ matrices with complex entries (viewed as a real vector space,
that is, so that ${\rm dim}({\cal M}_n({\mathbb C})) = 2n^2$). 
For $T\in {\cal M}_n({\mathbb C})$ and $p\gs 1$, one defines the Schatten $p$-norm of $T$ by
\begin{equation*} \|T\|_{S_p^n} := \|s(T)\|_p = \left(\sum_{i=1}^n s_i(T)^p\right)^{1/p},\end{equation*}
where $s(T) = (s_1(T),\ldots, s_n(T))$ is the non-increasing rearrangement of the singular values of $T$, that is, of the eigenvalues of $(T^\ast T)^{1/2}$.
As usual, by $\|s(T)\|_\infty$ we mean just the maximum of these singular values, namely $s_1(T)$,
and we set $\|T\|_{S_\infty^n} : = \|s(T)\|_\infty$ to be the operator or spectral norm of $T$.
Let $B(S_p^n)$ denote the unit ball of ${\cal M}_n({\mathbb C})$ equipped with the Schatten $p$-norm, and let $E$ be either the whole space ${\cal M}_n({\mathbb C})$,
or the subspace ${\cal M}_n({\mathbb R})$ of all $n\times n$ matrices with real entries, or one of the following classical subspaces: 
of real self-adjoint (or, more simply, symmetric) matrices, of complex self-adjoint (or Hermitian) matrices, of anti-symmetric Hermitian matrices, 
or of complex symmetric matrices. We can also consider the more general space ${\cal M}_n({\mathbb H})$ of all $n\times n$ matrices with 
quaternion entries (viewed again as a real vector space, that is, so that ${\rm dim}({\cal M}_n({\mathbb H})) = 4n^2$), and its subspace of 
Hermitian quaternionic matrices; the Schatten $p$-norm on those spaces is defined in the same way as above.

Let $K_{p,E} \equiv B(S_p^n) \cap E$ stand for the unit ball of the Schatten $p$-norm in one of the above subspaces $E$. Of course 
$K_{p,E}$ is a convex body which, when $E= {\cal M}_n({\mathbb R})$, ${\cal M}_n({\mathbb C})$ or ${\cal M}_n({\mathbb H})$, we also know is isotropic
(more precisely, its homothetic copy $\overline{K}_{p,E}$ that has volume 1 is isotropic),
so it is natural to ask whether it satisfies the thin-shell conjecture. In fact, we can ask the same question for all of the abovementioned Schatten classes $S_p^n\cap E$,
even if we do not know whether $\overline{K}_{p,E}$ is in isotropic position 
(for example, it is known that this may not happen in the subspace of real self-adjoint matrices 
as indicated in the paper \cite{Aubrun-Szarek-2006}).
In all cases it will turn out that bounding 
$\sigma_{K_{p,E}}$ by an absolute constant is equivalent to bounding the variance ${\rm Var}_{K_{p,E}}\bigl(\|{\cal T}\|_2^2\bigr)$ of the Euclidean norm
by an absolute constant times the dimension $d_n := {\rm dim}(E)$ of the space, which for all cases of $E$ that we consider is $\simeq n^2$:
this is due to the fact that, by the methods referred to in the next paragraph, we have for all these Schatten classes that
\begin{equation}\label{eq:KMP-GP-bounds} {\mathbb E}_{K_{p,E}}\bigl(\|{\cal T}\|_2^2\bigr) \simeq d_n \simeq n^2.\end{equation}
The best known bound for ${\rm Var}_{K_{p,E}}\bigl(\|{\cal T}\|_2^2\bigr)$ is due to Barthe and Cordero-Erausquin \cite{Barthe-Cordero-2013},
who generalised Klartag's method in \cite{Klartag-2009}
in a way that allows one to obtain useful estimates on variances of various Lipschitz functions by taking advantage of certain symmetries the 
function and the underlying probability measure (in our case, the uniform measure on ${\overline K}_{p,E}$) possess: 
for example, in the case of the isotropic unit balls of $S_p^n\cap {\cal M}_n({\mathbb F})$, where $p\in [1,\infty]$ 
and ${\mathbb F} = {\mathbb R}$, or ${\mathbb C}$ or ${\mathbb H}$, there are enough symmetries to lead to the bound
\begin{equation}\label{eq:Barthe-Cordero-bound} 
{\rm Var}_{K_p}\bigl(\|{\cal T}\|_2^2\bigr) \ls Cn\cdot d_n \simeq n^3,\end{equation}
or equivalently to the estimate $\sigma_{K_p}^2 = O(n) = O\bigl(\sqrt{{\rm dim}({\cal M}_n({\mathbb F}))}\bigr)$, 
which is an improvement of
what would follow from the general upper bound for the thin-shell conjecture due to Guédon and Milman \cite{Guedon-Milman-2011}.
One of the main results of this paper is an improvement of \eqref{eq:Barthe-Cordero-bound} for $p=\infty$ and for a few more cases.

The unit balls of $S_p^n$
have been studied in the past with respect to other important conjectures 
or questions in Convex Geometry as well: 
\begin{itemize}
\item
in \cite{Konig-Meyer-Pajor-1998} König, Meyer and Pajor established the hyperplane conjecture for them;
\item
in \cite{Guedon-Paouris-2007} Guédon and Paouris studied the behaviour of the Schatten classes with respect to 
concentration of volume, and showed that all but an exponentially small (in the dimension) fraction of the unit balls $K_p$ 
of $S_p^n$ is found in a Euclidean ball of radius twice the average distance of an element in $K_p$ from the origin.
Note that in many ways this question is complementary to the thin-shell conjecture.
\end{itemize}
Not long after \cite{Guedon-Paouris-2007}, Paouris \cite{Paouris-2006} resolved the latter question in the affirmative for all convex bodies in isotropic position 
(as are the unit balls of $S_p^n\cap {\cal M}_n({\mathbb F})$);
however, as should be expected perhaps, the method he used was quite different
from the methods of \cite{Guedon-Paouris-2007} and of \cite{Konig-Meyer-Pajor-1998}, which are very specific to the Schatten classes. 
We use a refinement of the latter methods, a key ingredient of which is the following fundamental fact from Random Matrix Theory 
(see for example \cite{Mehta-2004} or \cite[Proposition 4.1.3]{Anderson-Guionnet-Zeitouni-2010}; see also Section
\ref{sec:Notation} for corresponding results for the other subspaces):

\bigskip

\noindent {\bf Fact 1.} {\it Let $E = {\cal M}_n({\mathbb R})$, or $E= {\cal M}_n({\mathbb C})$, or $ E = {\cal M}_n({\mathbb H})$,
and let $\beta = 1$ or $\beta=2$ or $\beta=4$ respectively. There exists a constant $c_{n,E}$, depending on $n$ and on the subspace $E$, such that, given any function $F:{\mathbb R}^n \to {\mathbb R}^+$ that is symmetric 
$($namely, invariant under permutations of the coordinates of the input$)$ and measurable, we have that
\begin{equation*} \int_E F\bigl(s(T)\bigr)\,dT = c_{n,E} \cdot \int_{{\mathbb R}^n} F\bigl(|x_1|, \ldots, |x_n|\bigr)\cdot f_{2,\beta,\beta-1}(x)\,dx,\end{equation*}
where $s(T)$ is the singular-values-vector of the matrix $T\in E$, and where, for non-negative integers $a,b, c$, we write
\begin{equation*} f_{a,b,c}(x) = \prod_{1\ls i<j\ls n}\big|x_i^a - x_j^a\big|^b\cdot \prod_{1\ls i\ls n}|x_i|^c.\end{equation*}}

\medskip

Making use of Fact 1 in the very specific cases for which we need it, which concern estimation of certain integrals over the balls $K_{p,E}$,
we arrive at the following lemma (used in both \cite{Konig-Meyer-Pajor-1998} and \cite{Guedon-Paouris-2007}).

\smallskip

\begin{lemma}\label{lem:reduction to n-integrals1}
Let $E = {\cal M}_n({\mathbb R})$, or $E= {\cal M}_n({\mathbb C})$, or $ E = {\cal M}_n({\mathbb H})$,
and let $F:{\mathbb R}^n \to {\mathbb R}^+$
be a measurable, symmetric function. Then
\begin{equation}\label{eq1:reduction to n-integrals1} 
\int_{K_{p,E}} F\bigl(s_1(T),\ldots, s_n(T)\bigr)\, dT = c_{n,E}\int_{B_p^n} F\bigl(|x_1|,\ldots,|x_n|\bigr) f_{2,\beta,\beta-1}(x)\,dx,\end{equation}
where $\beta\in \{1,2,4\}$ and $f_{a,b,c}$ are as above. 

If, in addition, $F$ is positively homogeneous of degree $k$ for some real number $k$
$($that is, $F\bigl(rx_1, \ldots, rx_n\bigr) = r^k\cdot F(x_1,\ldots,x_n)$ for all $r>0)$, then $\eqref{eq1:reduction to n-integrals1}$ can also take the following form:
\begin{equation} \label{eq2:reduction to n-integrals1}
\int_{K_{p,E}} F\bigl(s(T)\bigr)\, dT  
= \frac{c_{n,E}}{\Gamma\!\left(1+\frac{d_n+k}{p}\right)}\int_{{\mathbb R}^n} F\bigl(|x_1|,\ldots,|x_n|\bigr) e^{-\|x\|_p^p}f_{2,\beta,\beta-1}(x)\,dx,
\end{equation}
where $d_n = \beta n^2$ is the dimension of $E$.
\end{lemma}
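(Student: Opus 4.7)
The plan is to prove \eqref{eq1:reduction to n-integrals1} by a direct application of Fact~1, then to derive \eqref{eq2:reduction to n-integrals1} from it through a homogeneity/layer-cake argument. For the first identity, I observe that since $\|T\|_{S_p^n} = \|s(T)\|_p$ depends symmetrically on $s(T)$, the indicator $\mathbf{1}_{K_{p,E}}(T)$ is itself a symmetric function of $s(T)$, so $F\bigl(s(T)\bigr)\,\mathbf{1}_{K_{p,E}}(T) = \widetilde F\bigl(s(T)\bigr)$ for the non-negative, symmetric, measurable function $\widetilde F(y) := F(y)\,\mathbf{1}_{\|y\|_p\ls 1}$ on ${\mathbb R}^n$. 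Applying Fact~1 to $\widetilde F$ and using $\|(|x_1|,\ldots,|x_n|)\|_p = \|x\|_p$ to rewrite the indicator as $\mathbf{1}_{B_p^n}(x)$ then yields \eqref{eq1:reduction to n-integrals1}.

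For the homogeneous case, I would first compute the degree of $f_{2,\beta,\beta-1}$: the Vandermonde-type factor $\prod_{i<j}|x_i^2 - x_j^2|^\beta$ is positively homogeneous of degree $\beta n(n-1)$, and $\prod_i |x_i|^{\beta-1}$ of degree $(\beta-1)n$, giving a total of $\beta n^2 - n = d_n - n$. Together with $F$ of degree $k$, the integrand $G(x) := F(|x_1|,\ldots,|x_n|)\,f_{2,\beta,\beta-1}(x)$ is non-negative and positively homogeneous of degree $d_n + k - n$; in particular, changing variables $x = ry$ yields $\int_{rB_p^n} G(x)\,dx = r^{d_n+k}\int_{B_p^n} G(x)\,dx$ for every $r>0$.

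The final step is to write
\begin{equation*}
\int_{{\mathbb R}^n} G(x)\,e^{-\|x\|_p^p}\,dx
= \int_{{\mathbb R}^n} G(x)\int_{\|x\|_p^p}^\infty e^{-t}\,dt\,dx
= \int_0^\infty e^{-t}\!\int_{t^{1/p}B_p^n}\!G(x)\,dx\,dt
\end{equation*}
by Fubini (permissible since $G\gs 0$), which, combined with the scaling identity above and $\int_0^\infty t^{(d_n+k)/p}e^{-t}\,dt = \Gamma\!\bigl(1+(d_n+k)/p\bigr)$, gives
\begin{equation*}
\int_{{\mathbb R}^n} G(x)\,e^{-\|x\|_p^p}\,dx = \Gamma\!\left(1+\tfrac{d_n+k}{p}\right)\int_{B_p^n} G(x)\,dx.
\end{equation*}
Solving for the $B_p^n$ integral and substituting into \eqref{eq1:reduction to n-integrals1} produces \eqref{eq2:reduction to n-integrals1}. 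The argument is essentially routine once Fact~1 is in hand; the only accounting to watch is correctly combining the $r^n$ Jacobian from the change of variables with the homogeneity degree of $G$, so that the gamma parameter comes out as $(d_n+k)/p$ rather than, say, $(d_n + k - n)/p$.
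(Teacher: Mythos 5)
Your proof is correct. The paper itself does not give a proof of this lemma (it cites König--Meyer--Pajor and Guédon--Paouris for it), but your derivation is the standard one: apply Fact~1 to $F\cdot\mathbf{1}_{\|\cdot\|_p\ls 1}$ to get \eqref{eq1:reduction to n-integrals1}, then use the homogeneity of $F\cdot f_{2,\beta,\beta-1}$ together with the layer-cake identity $e^{-\|x\|_p^p}=\int_{\|x\|_p^p}^{\infty}e^{-t}\,dt$ and Tonelli to absorb the Gaussian-type weight into a $\Gamma$-factor. The degree count ($f_{2,\beta,\beta-1}$ has degree $d_n-n$, so with the $r^n$ Jacobian the scaling is $r^{d_n+k}$) and the resulting parameter $(d_n+k)/p$ are both right.
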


\smallskip

Note that, by using \eqref{eq1:reduction to n-integrals1}, or more generally 
\begin{equation*} \int_{K_{p,E}} F\bigl(s_1(T),\ldots, s_n(T)\bigr)\, dT = c_{n,E}\int_{B_p^n} F(x) f_{a,b,c}(x)\,dx,\end{equation*}
with $F = {\bf 1}_{sB_p^n}$ for different values of $s\in (0,1)$, we readily see that we must have $d_n = {\rm dim}(E) = abn(n-1)/2 + (c+1)n$
(which is in accordance with the values of $a, b,c$ that appear above). Note also that in the sequel we may write $f_{a,b,c,p}$ for the density
\begin{equation*} \exp\bigl(-\|x\|_p^p\bigr)\cdot f_{a,b,c}(x) =\exp\bigl(-\|x\|_p^p\bigr)\cdot \prod_{1\ls i<j\ls n}\big|x_i^a - x_j^a\big|^b\cdot \prod_{1\ls i\ls n}|x_i|^c.\end{equation*}

\medskip

By taking advantage of this type of reduction of estimation of certain integrals over the balls $K_{p,E}$ to 
estimation of integrals over ${\mathbb R}^n$ with respect to the densities $f_{a,b,c,p}$, and also by exploiting
certain symmetry properties of these densities, we manage to establish the thin-shell conjecture for the operator norm
on each one of the three main matrix spaces, ${\cal M}_n({\mathbb R})$, ${\cal M}_n({\mathbb C})$ or ${\cal M}_n({\mathbb H})$.
In fact, the same arguments also work and allow us to obtain the same result for the subspaces of Hermitian matrices,
of anti-symmetric Hermitian matrices, and of complex symmetric matrices as well (even though we do not know whether
the normalised unit ball of $S_p^n$ in these subspaces is in isotropic position); these cases may be of independent interest
although we are not aware right now of any applications of this result to more classical questions concerning these subspaces of random matrices.

A further result we establish is a necessary condition for the thin-shell conjecture to be true on any of the Schatten classes $S_p^n\cap E$,
where $E$ is any of the subspaces mentioned in the previous paragraph:
this necessary condition is a certain type of negative correlation property for the densities $f_{a,b,c,p}$,
as well as for the uniform densities on the balls $K_{p,E}$ under certain conditions.

\smallskip

We move on to giving the exact technical statements of our main results.

\subsection{Outline of the present paper}

Let us briefly describe the main ideas behind our arguments by focusing on the case of $E = {\cal M}_n({\mathbb F})$
where ${\mathbb F} = {\mathbb R}$, or ${\mathbb C}$ or ${\mathbb H}$. Recall that then 
$d_n= {\rm dim}(E) = \beta n^2$, where $\beta =1, 2$ or $4$ respectively, and that for every $p\gs 1$ the ball $\overline{K}_{p,{\cal M}_n({\mathbb F})} = \overline{K}_p$,
normalised so that it has volume 1, is in isotropic position (we will recall the reasons for this in Section \ref{sec:neg-cor-prop-revisited}, where we will need to study
symmetry (or invariance) properties of the balls $K_{p,E}$ in more detail). 
By \eqref{eq:KMP-GP-bounds} we wish to bound the variance ${\rm Var}_{\overline{K}_p}\bigl(\|{\cal T}\|_2^2\bigr)$
by $C\cdot d_n$, where $C$ is an absolute constant.

Given the equality $\|T\|_2 \equiv \|T\|_{HS} = \|T\|_{S_2^n}$ of the Hilbert-Schmidt norm and the Schatten 2-norm of any matrix $T\in {\cal M}_n({\mathbb H})$,
we can expand the quantity ${\rm Var}_{\overline{K}_p}\bigl(\|{\cal T}\|_2^2\bigr)$ in two different ways:
\begin{gather} 
\nonumber
{\rm Var}_{\overline{K}_p}\bigl(\|{\cal T}\|_2^2\bigr) = \int_{\overline{K}_p}\|T\|_2^4\,dT - \left(\int_{\overline{K}_p}\|T\|_2^2\,dT\right)^2
\\ \nonumber
= \sum_{i,j=1}^n\int_{\overline{K}_p}|a_{i,j}(T)|^4\, dT + \sum_{\substack{i,j,k,l=1\\(i,j)\neq (k,l)}}^n \int_{\overline{K}_p} |a_{i,j}(T)|^2|a_{k,l}(T)|^2\,dT - 
\left(\sum_{i,j=1}^n\int_{\overline{K}_p}|a_{i,j}(T)|^2\, dT\right)^2
\\ 
\intertext{where $a_{i,j}(T)$ is the $(i,j)$-th entry of $T$,}
\nonumber
= \int_{\overline{K}_p}\|(s_1(T),\ldots, s_n(T))\|_2^4\,dT - \left(\int_{\overline{K}_p}\|(s_1(T),\ldots, s_n(T))\|_2^2\,dT\right)^2
\\ \label{eq:Var-expanded2}
= \int_{\overline{K}_p}\|(s_1(T),\ldots, s_n(T))\|_4^4\,dT + \sum_{\substack{i,j=1\\i\neq j}}^n \int_{\overline{K}_p} s_i(T)^2s_j(T)^2\,dT - 
\left(\int_{\overline{K}_p}\|(s_1(T),\ldots, s_n(T))\|_2^2\,dT\right)^2.
\end{gather}
Focusing on the second way for now, we build on ideas and techniques from \cite{Konig-Meyer-Pajor-1998} and \cite{Guedon-Paouris-2007}, 
and try, in this context as well, to reduce the estimation of moments of the Euclidean norm (or of other functions) over the balls $K_p$
to estimation of integrals with respect to the densities $f_{a,b,c,p}$ in ${\mathbb R}^n$, which are no longer uniform, or even log-concave, but have strong symmetry properties.
In the cases of ${\cal M}_n({\mathbb F})$ we can do so because of Lemma \ref{lem:reduction to n-integrals1}, 
which we use it with $F(s(T))$ being the Euclidean norm of $T\in K_p$, that is, the $\ell_2$ norm of the singular values,
or the constant function ${\bf 1}$, or the $\ell_4$ norm of the singular values, or the sum of all cross terms in \eqref{eq:Var-expanded2}.
The core of our reduction then is the following 

\begin{proposition}\label{prop:Var Kp and Mp for MnF} 
Let ${\rm Var}_{M_{2,\beta,\beta-1,p}}\bigl(\|x\|_2^2\bigr)$ $($or more briefly ${\rm Var}_{M_p}\bigl(\|x\|_2^2\bigr))$ denote the quantity
\begin{equation*} \frac{M_{2,\beta,\beta-1,p}\bigl(\|x\|_2^4\bigr)}{M_{2,\beta,\beta-1,p}(1)} - 
\left(\frac{M_{2,\beta,\beta-1,p}\bigl(\|x\|_2^2\bigr)}{M_{2,\beta,\beta-1,p}(1)}\right)^2,\end{equation*}
that is, the variance of the Euclidean norm with respect to the density $e^{-\|x\|_p^p} f_{a,b,c}(x)$ where $a=2$ and $b=c+1=\beta$ .
The following relation is true:
\begin{equation*} {\rm Var}_{M_p}\bigl(\|x\|_2^2\bigr) \simeq \max\Bigl\{\sigma_{K_p}^2, \frac{1}{p}\Bigr\}\cdot n^{4/p}.\end{equation*}
\end{proposition}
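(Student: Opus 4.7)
The plan is to use Lemma \ref{lem:reduction to n-integrals1} to reduce everything to a single master identity linking $\sigma_{K_p}^2$ to the quantities $A_k := M_{2,\beta,\beta-1,p}\bigl(\|x\|_2^{2k}\bigr)$. Applying \eqref{eq2:reduction to n-integrals1} to the positively homogeneous functions $1$, $\|s\|_2^2$ and $\|s\|_2^4$ on $K_p$, one obtains
\[
I_k := \int_{K_p}\|T\|_2^{2k}\,dT \;=\; \frac{c_{n,E}}{\Gamma\!\bigl(1+(d_n+2k)/p\bigr)}\,A_k, \qquad k=0,1,2.
\]
Since $\sigma_{K_p}$ is scale-invariant and $\overline{K}_p$ is a homothet of $K_p$, a direct computation of the variance on the unnormalised ball gives $\sigma_{K_p}^2 = d_n\bigl(I_0 I_2/I_1^2 - 1\bigr)$. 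Substituting the relations above yields
\[
\frac{A_0 A_2}{A_1^2} \;=\; \alpha_p\!\left(1 + \frac{\sigma_{K_p}^2}{d_n}\right), \qquad \alpha_p := \frac{\Gamma\!\bigl(1+(d_n+4)/p\bigr)\,\Gamma\!\bigl(1+d_n/p\bigr)}{\Gamma\!\bigl(1+(d_n+2)/p\bigr)^2},
\]
and therefore
\[
\mathrm{Var}_{M_p}\bigl(\|x\|_2^2\bigr) \;=\; \Bigl(\tfrac{A_1}{A_0}\Bigr)^{\!2}\!\Bigl[(\alpha_p - 1) + \alpha_p\,\sigma_{K_p}^2/d_n\Bigr].
\]

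The next step is to estimate the two factors separately. For $\alpha_p$, set $g := \log\Gamma$, $x := 1+d_n/p$ and $h := 2/p$; Taylor's theorem gives $\log\alpha_p = g(x+2h) + g(x) - 2g(x+h) = h^2\psi'(\xi)$ for some $\xi\in(x,x+2h)$. Since $\psi'(t)\simeq 1/t$ uniformly for $t\gs 1$ and $\xi\simeq d_n/p$, this yields $\alpha_p - 1 \simeq 1/(p\,d_n)$ for all $p\in[1,\infty)$ (and $\alpha_\infty = 1$, consistent with $1/p = 0$). Hence $(\alpha_p - 1) + \alpha_p\,\sigma_{K_p}^2/d_n \simeq \frac{1}{d_n}\max\{\sigma_{K_p}^2,\,1/p\}$. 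For the prefactor, the reduction lemma gives
\[
\frac{A_1}{A_0} \;=\; \frac{\Gamma\!\bigl(1+(d_n+2)/p\bigr)}{\Gamma\!\bigl(1+d_n/p\bigr)}\cdot \mathbb{E}_{K_p}\|T\|_2^2 \;\simeq\; n^{4/p}\cdot \mathbb{E}_{K_p}\|T\|_2^2,
\]
the Gamma ratio being $\simeq (d_n/p)^{2/p}\simeq n^{4/p}$ (as $p^{2/p}$ stays bounded for $p\gs 1$). By \eqref{eq:KMP-GP-bounds} together with the König--Meyer--Pajor estimate $\mathrm{Vol}(K_p)^{2/d_n}\simeq n^{-1-2/p}$, $\mathbb{E}_{K_p}\|T\|_2^2 \simeq n^{1-2/p}$, and thus $(A_1/A_0)^2/d_n \simeq n^{4/p}$. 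Combining the three estimates produces the claimed equivalence.

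The main obstacle is the two-sided Gamma-ratio estimate $\alpha_p - 1 \simeq 1/(p\,d_n)$: this correction is precisely what generates the $1/p$ term in $\max\{\sigma_{K_p}^2, 1/p\}$, so it cannot be swept under the rug as a harmless remainder, and the constants need to be controlled uniformly across the whole range $p\in[1,\infty]$. The auxiliary input $\mathrm{Vol}(K_p)^{2/d_n} \simeq n^{-1-2/p}$ (equivalently, $\mathbb{E}_{K_p}\|T\|_2^2 \simeq n^{1-2/p}$) is also non-trivial but is already standard in the Schatten-class literature, being implicit in \cite{Konig-Meyer-Pajor-1998} and \cite{Guedon-Paouris-2007}.
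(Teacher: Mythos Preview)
Your approach is essentially the same as the paper's: both reduce via Lemma~\ref{lem:reduction to n-integrals1} to an identity relating $\mathrm{Var}_{M_p}(\|x\|_2^2)$, $\sigma_{K_p}^2$, and a ratio of Gamma functions, and both then estimate the Gamma correction and the prefactor $(A_1/A_0)^2$ separately. Your packaging via $\alpha_p$ is a neat variant of the paper's split into the two terms \eqref{eqp1:prop:Var Kp and Mp}--\eqref{eqp2:prop:Var Kp and Mp}.

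There is, however, one genuine inaccuracy. Your second-difference argument gives $\log\alpha_p = (4/p^2)\psi'(\xi)$ with $\xi\in(1+d_n/p,\,1+(d_n+4)/p)$, so $\xi\simeq 1+d_n/p$, \emph{not} $\xi\simeq d_n/p$. For $p\lesssim d_n$ these agree and your conclusion $\alpha_p-1\simeq 1/(p\,d_n)$ is correct, but for $p\gg d_n$ one has $\xi\simeq 1$ and hence $\alpha_p-1\simeq 1/p^2$, which is strictly smaller than $1/(p\,d_n)$. Thus the assertion ``$\alpha_p-1\simeq 1/(p\,d_n)$ for all $p\in[1,\infty)$'' is false as stated, and the lower bound in your equivalence $(\alpha_p-1)+\alpha_p\sigma_{K_p}^2/d_n\simeq d_n^{-1}\max\{\sigma_{K_p}^2,1/p\}$ does not follow in the regime $p\gg d_n$ from what you wrote.

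The fix is exactly what the paper does: invoke the Bobkov--Koldobsky lower bound $\sigma_{K_p}^2\gs \sigma_{B_2^{d_n}}^2=4/(d_n+4)$, valid for every centred convex body. Then for $p\gtrsim d_n$ one has $\sigma_{K_p}^2\gtrsim 1/d_n\gtrsim 1/p$, so $\max\{\sigma_{K_p}^2,1/p\}=\sigma_{K_p}^2$ and the term $\alpha_p-1$ is harmlessly absorbed by $\alpha_p\sigma_{K_p}^2/d_n$. With this one extra ingredient your argument goes through; without it, the lower bound on $\mathrm{Var}_{M_p}$ is unproven for large $p$.
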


In fact, this proposition remains valid for all cases of $E$ among the classical subspaces of $n\times n$ matrices that we have mentioned,
where in place of the quantity ${\rm Var}_{M_{2,\beta,\beta-1,p}}\bigl(\|x\|_2^2\bigr)$ we will have ${\rm Var}_{M_{a,b,c,p}}\bigl(\|x\|_2^2\bigr)$
for the non-negative integers $a,b,c$ which make (a variant of) Lemma \ref{lem:reduction to n-integrals1} valid for $E$. Its proof is detailed in Section \ref{sec:reduction-to-Mp}.

Our main task now shifts into studying the expression 
\begin{align}
\nonumber
{\rm Var}_{M_p}\bigl(\|x\|_2^2\bigr) & = \frac{M_p\bigl(\|x\|_2^4\bigr)}{M_p(1)} - \left(\frac{M_p\bigl(\|x\|_2^2\bigr)}{M_p(1)}\right)^2 
\\  \label{eq:terms-in-Var-Mp}
& = \sum_{i=1}^n {\rm Var}_{M_p}(x_i^2)  + \sum_{i\neq j} \left[\frac{M_p\bigl(x_i^2x_j^2\bigr)}{M_p(1)} - \frac{M_p\bigl(x_i^2\bigr)M_p\bigl(x_j^2\bigr)}{(M_p(1))^2}\right],
\end{align}
and the various terms that appear in it.
A key lemma for this purpose, which is central to the methods of
\cite{Konig-Meyer-Pajor-1998} and \cite{Guedon-Paouris-2007}, is the following
integration-by-parts result which allows one to obtain a series of recursive equivalences
that could facilitate the estimation of the integrals at hand or of other similar quantities.

\begin{lemma}\label{lem:int-by-parts}
For every $l = (\epsilon_l, \rho_l) \in \{+1,-1\}^n \times \{\rho\ \hbox{is a permutation of}\ [n]\}$
we consider the following subsets of ${\mathbb R}^n$ that can be written as intersections of $2n-1$ halfspaces:
\begin{equation*} {\cal P}_l :=  \{x: \epsilon_l(i)x_i \gs 0\ \hbox{for all}\ i,\ \hbox{and}\ |x_{\rho_l(1)}|\ls |x_{\rho_l(2)}|\ls \cdots \ls |x_{\rho_l(n)}|\}.\end{equation*}
Let $\xi\gs 0$ and $s> -d_n-\xi$, and let $f:{\mathbb R}^n\setminus\{0\} \to {\mathbb R}$ be an $s$-homogeneous function 
with the property that the product
\begin{equation*} f(x) \cdot f_{a,b,c}(x) = f(x)\cdot  \prod_{1\ls i<j\ls n}\big|\,x_i^a - x_j^a\big|^b\cdot \prod_{1\ls i\ls n}|x_i|^c\end{equation*} 
is $C^1$ in the interior of each of the subsets ${\cal P}_l$, and its partial derivatives can be continuously extended to the border of ${\cal P}_l$
$($except perhaps at the origin$)$.
Then
\begin{multline}\label{eq1:lem:int-by-parts}
(\xi + c +1)M_p\left(f(x)\sum_{i=1}^n|x_i|^\xi\right) = 
\\
pM_p\left(\|x\|_{\xi + p}^{\xi + p}f(x)\right) - M_p\left(\sum_{i=1}^n|x_i|^\xi x_i\frac{\partial f}{\partial x_i}(x)\right) - 
abM_p\left(f(x)\sum_{i=1}^n\sum_{j\neq i}\frac{|x_i|^\xi\,x_i^a}{x_i^a - x_j^a}\right).
\end{multline}
\end{lemma}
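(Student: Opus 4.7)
The entire statement is a divergence-theorem identity applied to the vector field
\[
V(x) = \Bigl(|x_i|^{\xi} x_i \cdot f(x)\cdot f_{a,b,c}(x)\cdot e^{-\|x\|_p^p}\Bigr)_{i=1}^n.
\]
The plan is to compute $\mathrm{div}(V)$ inside each polyhedral cell $\mathcal{P}_l$, observe that summing the cell-by-cell integrals of this divergence over all $l$ gives zero (because the boundary contributions vanish, as described below), and read off the identity \eqref{eq1:lem:int-by-parts} by rearranging terms.

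\textbf{Computing the divergence.} In the interior of any $\mathcal{P}_l$, the density $f_{a,b,c,p}$ is smooth and one has the logarithmic derivative
\[
\frac{\partial}{\partial x_i}\log f_{a,b,c,p}(x) = -p\,|x_i|^{p-2}x_i \;+\; ab\sum_{j\neq i}\frac{x_i^{a-1}}{x_i^a - x_j^a} \;+\; \frac{c}{x_i}.
\]
Multiplying by $|x_i|^{\xi}x_i$ converts this into the three terms that appear on the right-hand side of the desired identity. Combined with the Leibniz rule applied to $|x_i|^{\xi}x_i\cdot f\cdot f_{a,b,c,p}$ (note $|x_i|^\xi x_i$ is $C^1$ for $\xi\geqslant 0$, with derivative $(\xi+1)|x_i|^\xi$), one gets
\[
\sum_{i=1}^n\partial_i\!\bigl(|x_i|^\xi x_i\,f\,f_{a,b,c,p}\bigr) = (\xi+c+1)f f_{a,b,c,p}\sum_i |x_i|^\xi + f_{a,b,c,p}\sum_i |x_i|^\xi x_i\,\partial_i f + f f_{a,b,c,p}\Bigl[-p\|x\|_{\xi+p}^{\xi+p}+ab\sum_{i}\sum_{j\neq i}\tfrac{|x_i|^\xi x_i^a}{x_i^a - x_j^a}\Bigr].
\]
Integrating this equality cell by cell over each $\mathcal{P}_l$ and summing, the identity \eqref{eq1:lem:int-by-parts} will follow \emph{provided} $\sum_l\int_{\mathcal{P}_l}\mathrm{div}(V)\,dx = 0$.

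\textbf{Vanishing of boundary terms.} Applying the divergence theorem on $\mathcal{P}_l \cap B_R$ (ball of radius $R$) and eventually letting $R\to\infty$, one gets contributions from three kinds of faces. (i) The spherical piece $\partial B_R\cap\mathcal{P}_l$: here the Gaussian-type factor $e^{-\|x\|_p^p}$ beats the polynomial growth of $f\cdot f_{a,b,c}\cdot |x_i|^{\xi+1}$ (since $f$ is homogeneous of degree $s$), so the limit is $0$. (ii) Faces inside $\{x_i=0\}$: the factor $|x_i|^\xi x_i$ vanishes there (and $|x_i|^c$ gives an extra zero if $c>0$), so these faces contribute nothing. (iii) Faces inside $\{|x_{\rho_l(i)}|=|x_{\rho_l(i+1)}|\}$, i.e.\ $x_i^a = x_j^a$: the factor $|x_i^a-x_j^a|^b$ vanishes with $b\geqslant 1$, killing the contribution. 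Alternatively, since $f\cdot f_{a,b,c}$ was assumed to extend continuously to the closure of each $\mathcal{P}_l$, each internal face is shared by two adjacent cells with opposite outer normals and identical integrand, so the two contributions cancel pairwise.

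\textbf{Integrability and main obstacle.} The hypothesis $s>-d_n-\xi$ is exactly what is needed to make the integrand $|x_i|^\xi x_i f f_{a,b,c,p}$ and each term in the final identity absolutely integrable near the origin, since the combined homogeneity of $|x_i|^{\xi+1}\cdot f\cdot f_{a,b,c}$ is $\xi+1+s+d_n-n$, which together with $dx = r^{n-1}dr\,d\sigma$ gives radial exponent $\xi+s+d_n$, strictly greater than $0$. The only real technical point is thus the careful handling of the cell-by-cell divergence theorem: one must verify that the $C^1$-hypothesis on $f\cdot f_{a,b,c}$ inside each $\mathcal{P}_l$, together with the continuous extension to its boundary, justifies both the individual application of the divergence theorem on $\mathcal{P}_l\cap B_R$ and the pairwise cancellation across shared internal faces. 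This is the step I expect to require the most care; everything else is the logarithmic-derivative computation and bookkeeping.
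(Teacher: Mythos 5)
Your divergence-theorem argument is the right (and standard) proof of this lemma — indeed the paper itself does not reprove it, but attributes it to \cite{Konig-Meyer-Pajor-1998} and \cite{Guedon-Paouris-2007}, which both use exactly this cell-by-cell integration-by-parts. Your logarithmic-derivative computation is correct, the three terms on the right-hand side do appear with the correct coefficients, and the degree count $\xi+s+d_n>0$ is precisely what makes both the near-origin boundary sphere $\partial B_\epsilon$ and the volume integral of $\mathrm{div}(V)$ behave.

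One small imprecision worth fixing in part (iii): you assert that a face $\{|x_{\rho_l(i)}|=|x_{\rho_l(i+1)}|\}$ coincides with $\{x_i^a=x_j^a\}$, where the factor $|x_i^a-x_j^a|^b$ vanishes. That is correct when $a$ is even, but when $a$ is odd and the two coordinates have opposite signs within the orthant (say $\epsilon_l(i)=-\epsilon_l(j)$), the face is $\{x_i=-x_j\}$, on which $x_i^a-x_j^a=2x_i^a\neq 0$ generically — the integrand does \emph{not} vanish there. Your alternative cancellation argument is then not merely a backup but the operative one: since $f\cdot f_{a,b,c}$ extends continuously (by the $C^1$-with-extension hypothesis) to the closure of each cell and is a single-valued function off the origin, the limits from the two adjacent cells agree, and the opposite outer normals give a pairwise cancellation. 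I would recommend stating explicitly that for odd $a$ and opposite signs this is the argument that applies, rather than presenting it as an interchangeable alternative. With that clarification, the proof is complete and correct.
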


\bigskip

Trying to optimise on the way this lemma can be used for our problem, we manage to obtain precise identities (in the place of inequalities 
or equivalences deduced in \cite{Konig-Meyer-Pajor-1998} and \cite{Guedon-Paouris-2007}) which involve the terms appearing in \eqref{eq:terms-in-Var-Mp}
and which allow us to establish the thin-shell conjecture for all $p$ large enough with regard to the dimension. 

\begin{theorem}\label{thm:thin-shell-conj}
Suppose that $p\gs n^t\log n$ for some $t > 0$. Then we have that
\begin{equation*} {\rm Var}_{M_p}\bigl(\|x\|_2^2\bigr) \ls Cn^{4/p}\cdot \max\{n^{2-t}, 1\}\end{equation*}
for some absolute constant $C$. Combined with Proposition $\ref{prop:Var Kp and Mp for MnF}$, this shows that
\begin{equation*}\sigma_{K_p}^2 \ls C^\prime\max\{n^{2-t}, 1\} \end{equation*}
for all $p\gs n^t\log n$, and in particular that $\sigma_{K_p} = O(1)$ for all $p\gs n^2\log n$. 
The latter range includes the case of the operator norm $S_\infty^n$.

We also deduce that $\sigma_{K_p} \gtrsim 1$ for all $p\gtrsim n^2\log n$, and hence, in particular, that $\sigma_{K_\infty} \simeq 1$.
\end{theorem}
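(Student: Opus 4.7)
My plan is to apply Proposition~\ref{prop:Var Kp and Mp for MnF} to transfer everything to the density $f_{2,\beta,\beta-1,p}$: proving $\sigma_{K_p}^2 \ls C^\prime\max\{n^{2-t},1\}$ is equivalent to showing ${\rm Var}_{M_p}(\|x\|_2^2) \ls Cn^{4/p}\max\{n^{2-t},1\}$, and $\sigma_{K_p}\gtrsim 1$ for $p\gtrsim n^2\log n$ is equivalent to ${\rm Var}_{M_p}(\|x\|_2^2)\gtrsim n^{4/p}$ in that range. Since the density is invariant under permutations and under sign flips of the coordinates, $M_p(x_i^{2k}) = M_p(x_1^{2k})$ and $M_p(x_i^2 x_j^2) = M_p(x_1^2 x_2^2)$ for all $i\neq j$, so \eqref{eq:terms-in-Var-Mp} collapses to
\begin{equation*}
{\rm Var}_{M_p}\!\bigl(\|x\|_2^2\bigr) = \frac{nM_p(x_1^4) + n(n-1)M_p(x_1^2 x_2^2)}{M_p(1)} - \left(\frac{nM_p(x_1^2)}{M_p(1)}\right)^{\!2}.
\end{equation*}
All the information needed is therefore encoded in the three moments $M_p(x_1^2)$, $M_p(x_1^4)$ and $M_p(x_1^2 x_2^2)$, and the task splits into getting joint control over them.

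The main tool is Lemma~\ref{lem:int-by-parts}. With $\xi=0$ the singular sum $\sum_{i\neq j} x_i^2/(x_i^2 - x_j^2)$ antisymmetrises cleanly to the constant $\binom{n}{2}$, so the lemma yields $M_p(\|x\|_p^p) = (d_n/p)M_p(1)$ together with the clean identities $(d_n + 2k)M_p(f) = pM_p(\|x\|_p^p\,f)$ for each of the target polynomials $f$ of total even degree $2k\in\{2,4,4\}$. Each such identity links a target moment to the corresponding moment reweighted by $\|x\|_p^p$; since $M_p(\|x\|_p^p)/M_p(1) = d_n/p \simeq n^2/p$, the reweighting is effectively an extra factor of $n^2/p$. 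If further iterations are needed, I would apply the lemma with $\xi\gs p$: after antisymmetrisation the residual singular kernel $|x_i|^{\xi+2}/(x_i^2-x_j^2)$ becomes a symmetric polynomial in the $x_i^2$, so the scheme closes back into the same family of moments $M_p(S_{2m}\,f)$ with $S_{2m}=\sum_i x_i^{2m}$, each round gaining another factor of $n^2/p$.

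For the upper bound I would combine the three $\xi=0$ identities in such a way that the leading contribution $n^2 M_p(x_1^2)^2/M_p(1)^2$ coming from $n(n-1)M_p(x_1^2 x_2^2)/M_p(1)$ cancels against the square $(nM_p(x_1^2)/M_p(1))^2$ and then track the residual. The residual is a sum of moments carrying the accumulated $n^2/p$-factor from the IBP steps, multiplied by a baseline scale $n^{4/p}$: the $L^p$-constraint encoded in $e^{-\|x\|_p^p}$ forces $M_p(x_1^{2k})/M_p(1)\simeq n^{2k/p}$. Under $p\gs n^t\log n$ we have $n^2/p\ls\max\{n^{2-t},1\}$, so a single iteration already produces ${\rm Var}_{M_p}(\|x\|_2^2)\ls Cn^{4/p}\max\{n^{2-t},1\}$, and in particular $\sigma_{K_p}=O(1)$ once $p\gs n^2\log n$, which covers the operator norm.

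The hardest step, and what I expect to carry most of the technical work, is making this cancellation quantitative: a naive comparison of $n(n-1)M_p(x_1^2 x_2^2)/M_p(1)$ with $(nM_p(x_1^2)/M_p(1))^2$ loses a full factor of $n$, and one must carefully pair the three $\xi=0$ identities (and possibly one iterate with $\xi=p$) so that the leading contributions of their singular kernels cancel exactly before any bounded estimation is carried out. The lower bound is easier: the piece $n\,{\rm Var}_{M_p}(x_1^2)$ is non-negative and of order $n^{4/p}$, as $x_1^2$ exhibits $\Theta(n^{4/p})$ fluctuations under the $\beta$-ensemble-like density (a fact that itself follows from the $\xi=0$ identities applied to $f=x_1^2$ and $f=x_1^4$), while the cross-covariance $n(n-1)\,{\rm Cov}_{M_p}(x_1^2,x_2^2)$ can at worst absorb a constant fraction of it. Together these give ${\rm Var}_{M_p}(\|x\|_2^2)\gtrsim n^{4/p}$ for all $p\gtrsim n^2\log n$, and via Proposition~\ref{prop:Var Kp and Mp for MnF} this yields $\sigma_{K_p}\gtrsim 1$, hence $\sigma_{K_\infty}\simeq 1$.
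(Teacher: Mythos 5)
You have identified the right overall shape of the argument (transfer to the density $f_{2,\beta,\beta-1,p}$ via Proposition~\ref{prop:Var Kp and Mp for MnF}, decompose the variance into $n\,{\rm Var}_{M_p}(x_1^2)$ plus cross terms, and close the loop with integration-by-parts), but the specific identities you propose to use will not produce the cancellation, and there is a second gap in the lower bound.

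The first gap is the choice $\xi=0$ in Lemma~\ref{lem:int-by-parts}. You are correct that the singular kernel with $\xi=0$ collapses pointwise to the constant $\binom{n}{2}$, giving $(d_n+s)M_p(f) = pM_p(\|x\|_p^p\,f)$ for any $s$-homogeneous $f$ in the admissible class. But as the paper notes after Lemma~\ref{lem:multiples of p-norm}, this is exactly the case $l=p$ of that lemma, i.e.\ the radial scaling identity obtained by polar integration. Applying it to $f\in\{1,x_1^2,x_1^4,x_1^2x_2^2\}$ simply reweights each moment by (roughly) the same factor $d_n/p$; these identities never couple $M_p(x_1^4)$ or $M_p(x_1^2x_2^2)$ to $(M_p(x_1^2))^2$, so there is nothing to cancel. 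The ``iterate with $\xi\ge p$'' you suggest as a fallback also does not close: unless $\xi$ is a non-negative even integer, the antisymmetrised kernel $\bigl(|x_i|^\xi x_i^2-|x_j|^\xi x_j^2\bigr)/(x_i^2-x_j^2)$ is not a polynomial. The ingredient the paper actually needs, and which your plan is missing, is the special behaviour at $\xi=2$ when $a=2$: by Proposition~\ref{prop:int-by-parts-&-sym}, since $\xi=a=2$, the troublesome residual kernel vanishes identically, and one obtains the \emph{exact} identities \eqref{eq:identity1}--\eqref{eq:identity3}. In particular, \eqref{eq:identity2} ties $M_p(\|x\|_2^4)$ to $M_p(\|x\|_2^2\cdot\|x\|_{p+2}^{p+2})$ minus a $\|x\|_4^4$ correction with the \emph{same} prefactor $(2d_n+(1-c)n)/n$ appearing in \eqref{eq:identity1}; the one-sided H\"older comparison \eqref{eq:Holder} then lets $\|x\|_{p+2}^{p+2}$ be replaced by $\|x\|_p^{p+2}$ up to a $\bigl(1-O(\log n/p)\bigr)$ factor, after which the polar-integration Lemma~\ref{lem:multiples of p-norm} gives an exact cancellation modulo $O(\log n/p)+O(1/n^2)$. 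That is the mechanism that saves the extra factor of $n$; a $\xi=0$ based scheme does not see it.

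The second gap concerns the lower bound. You assert that $n(n-1)\,{\rm Cov}_{M_p}(x_1^2,x_2^2)$ ``can at worst absorb a constant fraction'' of $n\,{\rm Var}_{M_p}(x_1^2)$, but this is precisely what must be proved and is not automatic: the covariance has order $n^{4/p}/n$ (as \eqref{eqp2:thm:neg-cor-prop} shows), so $n(n-1)$ times it is of the same order as the diagonal contribution, and the two could in principle cancel to high order. The paper handles this not by a soft argument but by computing all three moments to relative precision $1+O(\log n/p)$ using the same exact identities \eqref{eq:identity1}--\eqref{eq:identity3} plus Lemma~\ref{lem:Gamma estimates}, and then showing the full sum evaluates to $1/(8b)+o(1)$. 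Nothing in your outline would produce a matching lower bound, because the quantities you propose to estimate are only known up to absolute constants, which is insufficient in a situation where the leading terms cancel. In short: the inputs (symmetry reduction, IBP, H\"older, polar integration) are all the right ones, but the choice of $\xi$ is the crux of the argument, and the lower bound requires a precise expansion rather than a bound-with-constants.
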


All the details and intermediate results leading to the proof of this theorem are presented in Section \ref{sec:main-proofs}. 
In fact, we can obtain the exact same result for the cases of Hermitian matrices, of anti-symmetric Hermitian matrices,
and of complex symmetric matrices (the latter case follows immediately, 
whereas the required adjustments to the arguments for the other two cases are discussed in Section \ref{sec:Special cases});
the common feature of all these subspaces with the spaces ${\cal M}_n({\mathbb F})$, that lets the method go through in all these instances, 
is that, for all of them, we do have a version of Lemma 1 where the density $f_{a,b,c,p}$ that appears is such that $a=2$
(for the case of Hermitian matrices this is an intriguing fact with several other applications, see \cite{Edelman-LaCroix-2015}). 
Unfortunately, to the best of our knowledge, this is not true for the subspaces of real self-adjoint or of Hermitian quaternionic matrices.

The second main result of that section and of the paper is a necessary condition for the thin-shell conjecture to hold true for any of the Schatten classes $S_p^n$, $p\gs 1$.

\begin{theorem}{\rm(\emph{Negative correlation property for the densities $f_{2,b,c,p}$})} \label{thm:neg-cor-prop}
We have 
\begin{align} 
\label{eq1:thm:neg-cor-prop}
\frac{M_p\bigl(x_1^4\bigr)}{M_p(1)} &\gs \Bigl(\dfrac{3}{2} + o(1)\Bigr) \left(\frac{M_p\bigl(x_1^2\bigr)}{M_p(1)}\right)^2, 
\\
\nonumber
\hbox{and hence} \quad n \cdot {\rm Var}_{M_p}\bigl(x_1^2\bigr) &\gs \Bigl(\dfrac{n}{2} + o(n)\Bigr)\left(\frac{M_p\bigl(x_1^2\bigr)}{M_p(1)}\right)^2\simeq n\cdot n^{4/p}.
\end{align}
Therefore, for 
\begin{equation} \label{eq2:thm:neg-cor-prop}
{\rm Var}_{M_p}\bigl(\|x\|_2^2\bigr)  = n\cdot {\rm Var}_{M_p}\bigl(x_1^2\bigr)
 + n(n-1) \left[\frac{M_p\bigl(x_1^2x_2^2\bigr)}{M_p(1)} - \left(\frac{M_p\bigl(x_1^2\bigr)}{M_p(1)}\right)^2\right],
 \end{equation}
 to be bounded by $n^{4/p}$, or even by $o(n\cdot n^{4/p})$, we need the cross terms in \eqref{eq2:thm:neg-cor-prop} to be negative. 
\end{theorem}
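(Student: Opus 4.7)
The plan is to reduce $(1)$ to a moment identity for the limiting empirical spectral measure of the coordinates $X_i$ under $M_p$, and to pin down the constant $3/2$ via the arcsine law at $p=\infty$.

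By the exchangeability of $f_{2,b,c,p}$, the moments satisfy $M_p(x_1^{2k})/M_p(1) = \frac{1}{n} M_p(\|x\|_{2k}^{2k})/M_p(1)$. Rescaling $\tilde X_i = X_i/n^{1/p}$, the joint law of $(\tilde X_1,\ldots,\tilde X_n)$ becomes proportional to $\exp(-n\|\tilde x\|_p^p) \prod_{i<j} |\tilde x_i^2 - \tilde x_j^2|^b \prod_i |\tilde x_i|^c$, a $\beta$-ensemble (with $\beta=b$) confined by the external field $V(x)=|x|^p$. By the standard equilibrium-measure / large-deviation theory for such ensembles, the empirical measure $n^{-1}\sum_i \delta_{\tilde X_i}$ concentrates -- in probability and in all moments, since the confining potential supplies uniform integrability -- around a deterministic symmetric probability measure $\rho=\rho_{p,b,c}$ on $\mathbb R$. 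Consequently
\[
n^{-2k/p}\cdot\frac{M_p(x_1^{2k})}{M_p(1)} \longrightarrow m_{2k}(\rho) := \int x^{2k}\,d\rho,
\]
so $(1)$ reduces to showing $m_4(\rho)\gs (3/2)\,m_2(\rho)^2$.

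The case $p=\infty$ supplies the tight constant and corresponds to the operator-norm setting of Theorem~\ref{thm:thin-shell-conj}. Here the potential collapses to the hard constraint $|x|\ls 1$, and the equilibrium problem on $[-1,1]$ with kernel $\log|x^2-y^2|$ reduces -- via the symmetry $x\mapsto -x$ and the factorisation $\log|x^2-y^2|=\log|x-y|+\log|x+y|$ -- to the classical logarithmic energy problem on $[-1,1]$. Its solution is the arcsine law $d\rho(x)=(\pi\sqrt{1-x^2})^{-1}dx$, whose even moments are the Wallis numbers $m_{2k}=\binom{2k}{k}/4^k$. In particular $m_2=1/2$, $m_4=3/8$, so $m_4/m_2^2=3/2$ \emph{exactly}, matching the constant in $(1)$ and showing it is tight.

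For finite $p$, the Euler--Lagrange condition (differentiating $|x|^p-b\int\log|x^2-y^2|\,d\rho(y)=\mathrm{const}$ on $\operatorname{supp}\rho$) gives the singular integral equation $\mathrm{P.V.}\!\int d\rho(y)/(x^2-y^2)=p|x|^{p-2}/(2b)$. Multiplying by $x^2$ and $x^4$ successively and integrating, using the antisymmetrisation $\iint(x^2+y^2)/(x^2-y^2)\,d\rho\,d\rho=0$, one obtains moment relations such as $\int x^p\,d\rho = b/p$ and $\int x^{p+2}\,d\rho=(2b/p)m_2$. From these, together with a variational/convexity comparison showing that the arcsine is extremal for $m_4/m_2^2$ among the measures in this one-parameter family, one concludes $m_4/m_2^2\gs 3/2$, with strict inequality for $p<\infty$ (for instance $m_4/m_2^2=2$ for the semicircle at $p=2$, $b=2$). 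The hardest step will be the uniformity of this bound across all $p\gs 1$: since $p=\infty$ is already tight, one cannot afford any slack at intermediate $p$, and this must be handled either via the singular integral equation above (solvable in closed form for small $p$ and asymptotically analysable in general) or via the variational comparison. The remaining ingredients -- convergence of the empirical measure, convergence of $2k$-moments by uniform integrability, and control of the $o(1)$ finite-$n$ corrections -- are standard for confined $\beta$-ensembles.
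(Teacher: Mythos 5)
Your approach is genuinely different from the paper's and it has a real gap in the step you yourself flag as ``the hardest.'' The paper's proof is purely algebraic and finite-dimensional: it combines the integration-by-parts identities \eqref{eq:identity1} and \eqref{eq:identity3} (consequences of Proposition \ref{prop:int-by-parts-&-sym}) with the elementary positivity $M_p\bigl[|x_1|^p(x_1^2 - A)^2\bigr]\gs 0$ for the right choice of $A$, which yields $p\,M_p(|x_1|^{p+4})/M_p(1)\gs (2d_n+(1-c)n)^2/(d_n n)\cdot\bigl(M_p(x_1^2)/M_p(1)\bigr)^2$; it then substitutes this, together with the external input $\sigma_{K_p}^2 = O(n)$ from Barthe--Cordero (giving $M_p(x_1^2x_2^2)/M_p(1)=(1+o(1))(M_p(x_1^2)/M_p(1))^2$), back into \eqref{eq:identity3} and reads off the coefficient $3/2+o(1)$ after an exact expansion in $d_n$. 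Your route instead passes to the equilibrium measure $\rho_p$ of the rescaled $\beta$-ensemble and attempts to establish $m_4(\rho_p)/m_2(\rho_p)^2\gs 3/2$; this is a legitimate alternative framing, and the observation that equality holds at $p=\infty$ for the arcsine law correctly explains why the constant $3/2$ is sharp.

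The gap is precisely the uniform bound $m_4(\rho_p)/m_2(\rho_p)^2\gs 3/2$ for all $p\in[1,\infty]$: you assert it and gesture at two possible strategies (the singular integral equation or a ``variational/convexity comparison'') without carrying either out, and neither is routine. This \emph{is} the content of the theorem once the continuum reduction is accepted, so it cannot be deferred. The family of equilibrium measures is not obviously amenable to a convexity argument: the ratio is $17/8$ at $p=1$, $2$ at $p=2$ and $3/2$ at $p=\infty$ (consistent with a monotone decrease, but monotonicity itself requires proof). There is also a secondary issue that you need uniformity of the moment convergence (and of the $o(1)$ finite-$n$ error) across all $p$ simultaneously, including $p$ that grow with $n$; the ``standard'' $\beta$-ensemble large-deviation results do not immediately cover a potential $|x|^p$ with $p=p_n\to\infty$, which is exactly the regime of interest in this paper. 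By contrast, the paper's finite-$n$ identities hold exactly for every $p$ and $n$, and the only asymptotic input is the Barthe--Cordero variance estimate.
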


\begin{corollary}
Combining Theorems $\ref{thm:thin-shell-conj}$ and $\ref{thm:neg-cor-prop}$, we can conclude 
that the densities $f_{2,b,c,p}$ on ${\mathbb R}^n$ satisfy a negative correlation property for all $p\in [c_0\,\!n\log n, +\infty)$,
where $c_0$ is an absolute constant that can be computed explicitly. The same is true
for the densities ${\bf 1}_{B_p^n}(x)\cdot f_{2,b,c}(x)$ for all $p\in [c_0\,\!n\log n, +\infty]$. 
\end{corollary}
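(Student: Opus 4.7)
The plan is to chain the three main results through the identity \eqref{eq2:thm:neg-cor-prop}: Theorem \ref{thm:neg-cor-prop} already provides a lower bound $n\cdot{\rm Var}_{M_p}(x_1^2)\gs c_1\,n\cdot n^{4/p}$ for some absolute constant $c_1>0$, so if Theorem \ref{thm:thin-shell-conj} and Proposition \ref{prop:Var Kp and Mp for MnF} together let us bound ${\rm Var}_{M_p}(\|x\|_2^2)$ strictly below $c_1\,n\cdot n^{4/p}$, then the cross terms in \eqref{eq2:thm:neg-cor-prop} are forced to be negative.

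For this, given $p\gs c_0\,n\log n$, I would apply Theorem \ref{thm:thin-shell-conj} with the largest admissible exponent, $t:=1+\log c_0/\log n$ (so that $n^t\log n=c_0\, n\log n\ls p$). Using the identity $n^{\log c_0/\log n}=c_0$, this gives $\sigma_{K_p}^2\ls C'\max\{n^{2-t},1\}\ls C'n/c_0$ as long as $c_0\ls n$. Proposition \ref{prop:Var Kp and Mp for MnF} then yields
\[
{\rm Var}_{M_p}\bigl(\|x\|_2^2\bigr)\ls C_2\max\bigl\{\sigma_{K_p}^2,\,1/p\bigr\}\,n^{4/p}\ls (C_2 C'/c_0)\,n\cdot n^{4/p},
\]
since $1/p$ is dominated by $n/c_0$ in this range. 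Feeding both estimates into \eqref{eq2:thm:neg-cor-prop} produces
\[
n(n-1)\Bigl[\frac{M_p(x_1^2 x_2^2)}{M_p(1)}-\Bigl(\frac{M_p(x_1^2)}{M_p(1)}\Bigr)^2\Bigr]\ls\Bigl(\frac{C_2 C'}{c_0}-c_1+o(1)\Bigr)\,n\cdot n^{4/p},
\]
which is strictly negative for all large $n$ as soon as $c_0 > C_2 C'/c_1$. All three constants are explicit in the cited results, which fixes the threshold $c_0$ and establishes the first assertion.

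For the second assertion---covering ${\bf 1}_{B_p^n}(x)\cdot f_{2,b,c}(x)$ including $p=\infty$, where the exponential weight is not defined---I would repeat the scheme using \eqref{eq1:reduction to n-integrals1} in place of \eqref{eq2:reduction to n-integrals1}. The expansion of ${\rm Var}_{K_{p,E}}\bigl(\|T\|_2^2\bigr)$ in the singular-values basis is, via \eqref{eq1:reduction to n-integrals1}, exactly the analogue of \eqref{eq2:thm:neg-cor-prop} for the uniform measure ${\bf 1}_{B_p^n}\cdot f_{2,b,c}$, and Theorem \ref{thm:thin-shell-conj} directly bounds its left-hand side. I expect the main (though mild) obstacle to be the missing analogue of \eqref{eq1:thm:neg-cor-prop} for this ball-restricted measure: one can either re-run the integration-by-parts argument underlying Lemma \ref{lem:int-by-parts} with the indicator of $B_p^n$ in place of $\exp(-\|x\|_p^p)$---the boundary contributions on $\partial B_p^n$ are of lower order thanks to the vanishing of $f_{2,b,c}$ along coordinate hyperplanes and the tame geometry of $\partial B_p^n$---or transport the relevant moments through \eqref{eq2:reduction to n-integrals1} and control the ratios of $\Gamma$-functions, which all tend to $1$ in the range $p\gtrsim n\log n$. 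Either way, once the analogue of \eqref{eq1:thm:neg-cor-prop} is in place, the arithmetic of the previous paragraph applies verbatim.
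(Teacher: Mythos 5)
Your argument is correct, and it is essentially the only natural way to combine Theorems~\ref{thm:thin-shell-conj} and~\ref{thm:neg-cor-prop} (the paper does not spell out a separate proof). A couple of streamlining remarks. First, you do not need to pass through $\sigma_{K_p}^2$ and then reverse Proposition~\ref{prop:Var Kp and Mp for MnF}: the proof of Theorem~\ref{thm:thin-shell-conj} already gives, directly,
\[
{\rm Var}_{M_p}\bigl(\|x\|_2^2\bigr)\;\ls\;\Bigl(\frac{C_1\log n}{p}+\frac{C_2}{n^2}\Bigr)\,n^2\Bigl(\frac{M_p(x_1^2)}{M_p(1)}\Bigr)^{\!2}
\;\ls\;\Bigl(\frac{C_1 n}{c_0}+C_2\Bigr)\Bigl(\frac{M_p(x_1^2)}{M_p(1)}\Bigr)^{\!2}
\]
for $p\gs c_0\,n\log n$, so one can compare to Theorem~\ref{thm:neg-cor-prop}'s lower bound $\bigl(\tfrac{n}{2}+o(n)\bigr)\bigl(M_p(x_1^2)/M_p(1)\bigr)^2$ in a single step, making the threshold $c_0>2C_1$ explicit and avoiding the extra $\simeq$ from Proposition~\ref{prop:Var Kp and Mp for MnF}. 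Your version, which does go through $\sigma_{K_p}^2$, is still valid because all the constants involved are absolute.

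For the second assertion, the $\Gamma$-transport you mention is the cleaner of your two options, and there is a slicker observation you missed: writing $\nu_p$ for the normalised measure ${\bf 1}_{B_p^n}f_{2,b,c}/Z$ and using Lemma~\ref{lem:reduction to n-integrals1}(b), one gets
\[
{\mathbb E}_{\nu_p}\bigl(x_1^2x_2^2\bigr)-\bigl[{\mathbb E}_{\nu_p}(x_1^2)\bigr]^2
=\frac{\Gamma\bigl(1+\tfrac{d_n}{p}\bigr)}{\Gamma\bigl(1+\tfrac{d_n+4}{p}\bigr)}\Bigl[\frac{M_p(x_1^2x_2^2)}{M_p(1)}-\Bigl(\frac{M_p(x_1^2)}{M_p(1)}\Bigr)^{\!2}\Bigr]
-\Bigl[\Bigl(\frac{\Gamma\bigl(1+\tfrac{d_n}{p}\bigr)}{\Gamma\bigl(1+\tfrac{d_n+2}{p}\bigr)}\Bigr)^{\!2}-\frac{\Gamma\bigl(1+\tfrac{d_n}{p}\bigr)}{\Gamma\bigl(1+\tfrac{d_n+4}{p}\bigr)}\Bigr]\Bigl(\frac{M_p(x_1^2)}{M_p(1)}\Bigr)^{\!2},
\]
and by Lemma~\ref{lem:Gamma estimates}(b) the $\Gamma$-bracket is nonnegative, so \emph{both} terms on the right are nonpositive once the $M_p$-negative correlation is known. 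Thus the ball-density negative correlation follows a fortiori, with no need to re-derive the analogue of~\eqref{eq1:thm:neg-cor-prop}. The remaining endpoint $p=\infty$, where $M_p$ is not defined, should be handled either by a limiting argument (the $\Gamma$-ratios converge to $1$ and the moments of $\nu_p$ converge to those of $\nu_\infty$ with uniform control) or directly from $\sigma_{K_\infty}\simeq 1$ in Theorem~\ref{thm:thin-shell-conj}; you flag the difficulty but do not resolve it, and the first of your two options (re-running the integration by parts against ${\bf 1}_{B_p^n}$) is more delicate than you suggest, since Lemma~\ref{lem:int-by-parts} uses the smooth factor $e^{-\|x\|_p^p}$ essentially.
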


Finally, in Section \ref{sec:neg-cor-prop-revisited} we explore this ``desirable'' negative correlation property further, with particular emphasis
on what it entails for the original uniform densities over the balls $K_p$ in ${\cal M}_n({\mathbb F})$.

\section{Further notation and preliminaries}\label{sec:Notation}

As mentioned in the Introduction, a function $F:{\mathbb R}^m \to {\mathbb R}$ is called \emph{symmetric}
if it is invariant under permutations of the coordinates of the input. 
It is called \emph{positively homogeneous of degree $k$}, where $k\in {\mathbb R}$, or more simply \emph{$k$-homogeneous}, 
if $F\bigl(rx_1, \ldots, rx_n\bigr) = r^k\cdot F(x_1,\ldots,x_n)$ for all $r>0$. 
We write $\overline{A}$ for the homothetic copy of volume 1 of a set $A\subset {\mathbb R}^m$ (as long as this exists).
Lebesgue volume will be denoted by $|A|$, and hence $\overline{A}:=\frac{1}{|A|^{1/m}}A$ whenever $|A|\neq 0$.

Recall that there is a one-to-one and onto correspondence between symmetric convex bodies in ${\mathbb R}^m$ 
and unit balls of norms on ${\mathbb R}^m$ (and thus it makes perfect sense to try to deal with a question about convex bodies
in the specific case of the unit balls of the Schatten classes, or of some other family of norms).  
We refer to \cite{BGVV-book} for background on isotropic convex bodies (or more generally, 
isotropic log-concave measures), as well as for a detailed account on
the three important conjectures about isotropic convex bodies mentioned in the Introduction, 
the hyperplane conjecture, the thin-shell conjecture and the KLS conjecture.

Unless otherwise specified, the letters $c,c^{\prime }, c_1, c_2$ etc. denote absolute positive
constants whose value may change from line to line. Whenever we
write $A\simeq B$ (or $A\lesssim B$) for two quantities $A, B$ related to objects in
${\mathbb R}^m$, we mean that there exist absolute constants
$c_1,c_2>0$, independent of the dimension $m$, such that 
$c_1A\leq B\leq c_2A$ (or $A\ls c_1B$). We will also use the Landau notation:
$A = O(B)$ means the same thing as $A\lesssim B$, whereas $A = o(B)$ means 
that the ratio $A/B$ tends to 0 as the dimension grows to infinity.

\bigskip

We now list the remaining results from Random Matrix Theory which we are going to use,
that are analogous to Fact 1 and its consequence, Lemma \ref{lem:reduction to n-integrals1},
and cover the cases of the other classical subspaces we mentioned in the Introduction: 
the subspaces of real self-adjoint matrices, of Hermitian matrices, and of Hermitian quaternionic matrices, 
or of complex symmetric matrices, or of anti-symmetric Hermitian matrices.
Recall that, if $T = \bigl(a_{i,j}\bigr)_{i,j}$ is an $n\times n$ matrix, then its adjoint matrix is the matrix
$T^\ast = \bigl(\overline{a}_{j,i}\bigr)_{i,j}$ (where $\overline{a} = a_1 -  a_2\cdot i - a_3\cdot j - a_4\cdot k$
for any quaternion $a = a_1 +  a_2\cdot i + a_3\cdot j + a_4\cdot k \in {\mathbb H}$ with $a_1,a_2, a_3, a_4\in {\mathbb R}$).

We first turn to the case of self-adjoint matrices, that is, matrices $T$ which satisfy $T= T^\ast$: remember that such matrices
have $n$ real eigenvalues $e_1(T), e_2(T),\ldots, e_n(T)$, and that their singular values are just the absolute values of
their eigenvalues. The latter fact of course implies that any symmetric function of the singular values of $T$,
such as any of the $S_p^n$ norms, can be thought of as a symmetric function of its eigenvalues as well. 
For such functions we have the following fact (see for example \cite{Mehta-2004} 
or \cite[Proposition 4.1.1]{Anderson-Guionnet-Zeitouni-2010}).

\bigskip

\noindent {\bf Fact 2.} 
{\it Let ${\mathbb F} = {\mathbb R}$, or ${\mathbb C}$ or ${\mathbb H}$, and let $E$ be the subspace of ${\cal M}_n({\mathbb F})$
of matrices $T$ that satisfy $T = T^\ast$.
There exists a constant $c_{n,E}$, depending on $n$ and on the subspace $E$, such that,
given any function $F:{\mathbb R}^n \to {\mathbb R}^+$ that is symmetric and measurable, we have that
\begin{align*} 
\int_E F\bigl(e_1(T), \ldots, e_n(T)\bigr)\,dT &= c_{n,E} \cdot \int_{{\mathbb R}^n} F\bigl(x_1, \ldots, x_n\bigr)\cdot f_{1,\beta,0}(x)\,dx
\\
& = c_{n,E} \cdot \int_{{\mathbb R}^n} F\bigl(x_1, \ldots, x_n\bigr)\cdot  \prod_{1\ls i<j\ls n}\big|x_i - x_j\big|^{\beta}\,dx,
\end{align*}
where $e_1(T), e_2(T),\ldots, e_n(T)$ are the $n$ real eigenvalues of $T$ $($arranged in non-increasing order$)$,
and where $\beta = 1$ if ${\mathbb F} = {\mathbb R}$, $\beta=2$ if ${\mathbb F} = {\mathbb C}$, and $\beta=4$ 
if ${\mathbb F} = {\mathbb H}$.}

\bigskip

%
%
%
%
%

In the case of the complex symmetric matrices we have the following (see \cite[Chapter 3]{Hua-1963}).


\medskip

\noindent {\bf Fact 3.} 
{\it Let $E$ be the subspace of ${\cal M}_n({\mathbb C})$ of complex symmetric matrices, 
namely matrices $T$ with complex entries and the property that $a_{j,i}(T) = a_{i,j}(T)$.
There exists a constant $c_{n,E}$ such that, given any function $F:{\mathbb R}^n \to {\mathbb R}^+$ 
that is symmetric and measurable, we have that
\begin{align*} 
\int_E F\bigl(s_1(T), \ldots, s_n(T)\bigr)\,dT &= c_{n,E} \cdot \int_{{\mathbb R}^n} F\bigl(|x_1|, \ldots, |x_n|\bigr)\cdot f_{2,1,1}(x)\,dx
\\
& = c_{n,E} \cdot \int_{{\mathbb R}^n} F\bigl(|x_1|, \ldots, |x_n|\bigr)\cdot  
\prod_{1\ls i<j\ls n}\big|x_i^2 - x_j^2\big| \cdot \prod_{1\ls i\ls n} |x_i|\,dx,
\end{align*}
where $s_1(T), \ldots, s_n(T)$ are the singular values of $T$ $($arranged in non-increasing order$)$.}

\bigskip

The counterparts of Lemma \ref{lem:reduction to n-integrals1} following from Facts 2 and 3 are obvious,
thus we will not state them.

%

\medskip

Finally, we have the following result for the subspace of ${\cal M}_n({\mathbb C})$ of anti-symmetric Hermitian matrices, 
where antisymmetric means that $T^\ast = -T$ (see \cite[Chapter 13]{Mehta-2004} or \cite[Section 2]{Edelman-LaCroix-2015} 
for an alternative proof).
Recall that the eigenvalues of such a matrix come in pairs, and are of the form $\pm i\theta_1, \ldots, \pm i\theta_s$ if $n = 2s$, where  
$\theta_1\gs\cdots \theta_s\gs 0$ are $s = \lfloor\frac{n}{2}\rfloor$ non-negative real numbers, while, if $n = 2s + 1$, they are of the form 
$\pm i\theta_1, \ldots, \pm i\theta_s, 0$, that is, the matrix $T$ has one additional eigenvalue, which is equal to 0. 
Then the singular values of $T$ are the numbers $\theta_1,\ldots, \theta_s$ with multiplicity two, as well as the number 0
if $n = 2s + 1$.

\bigskip

\noindent {\bf Fact 4.} 
{\it Let $E$ be the subspace of ${\cal M}_n({\mathbb C})$ of anti-symmetric Hermitian matrices equipped 
with the standard Gaussian measure. Then the induced joint probability density of the singular values 
$\theta_1,\ldots, \theta_s$ of the random matrix $T\in E$ is given by
\begin{gather*} 
{\mathbb P}_n\bigl((\theta_1,\ldots,\theta_s)\in A\bigr)
= c_{n,E} \cdot \int_A \prod_{1\ls i<j\ls s}\big|x_i^2 - x_j^2\big|^2\, \exp\bigl(-\|x\|_2^2\bigr)\,dx
\\
\intertext{if $n = 2s$, and by}
{\mathbb P}_n\bigl((\theta_1,\ldots,\theta_s)\in A\bigr) 
= c_{n,E} \cdot \int_A \prod_{1\ls i<j\ls s}\big|x_i^2 - x_j^2\big|^2 \cdot 
\prod_{1\ls i\ls s} |x_i|^2\, \exp\bigl(-\|x\|_2^2\bigr)\,dx
\end{gather*}
if $n = 2s +1$, where $A$ is a $1$-symmetric measurable subset of ${\mathbb R}^s$, and $c_{n,E}$ 
is a constant depending only on $n$.}

\medskip

Fact 4 will allow us in Section \ref{sec:Special cases} to show that the subspaces of anti-symmetric Hermitian matrices,
as well as of Hermitian matrices, satisfy Theorems \ref{thm:thin-shell-conj} and \ref{thm:neg-cor-prop}.

\bigskip

In the sequel we will also need the following result that gives us the order of magnitude of the volume radius of the balls $K_{p,E}$
(far more accurate estimates for the volume of the unit balls of the Schatten classes of real and complex matrices 
have been found by Saint-Raymond \cite{SaintRaymond-1984}, but we won't need those here).

\bigskip

\noindent {\bf Fact 5.} (See \cite[Proposition 3]{Guedon-Paouris-2007})
{\it Let ${\mathbb F} = {\mathbb R}$, or ${\mathbb C}$ or ${\mathbb H}$, and let $E$ be any subspace of ${\cal M}_n({\mathbb F})$
with dimension $d_n \simeq n^2$ $($this includes all the classical subspaces we mentioned in the Introduction$)$.
Then for every $p\gs 1$ we have
\begin{equation*} |K_{p,E}|^{1/d_n} = |B(S_p^n) \cap E|^{1/d_n} 
\simeq d_n^{-\frac{1}{4}-\frac{1}{2p}} \simeq n^{-\frac{1}{2}-\frac{1}{p}}. 
\end{equation*}}

\section{Reduction to integrals over ${\mathbb R}^n$}\label{sec:reduction-to-Mp}

In this section the main purpose is to prove Proposition \ref{prop:Var Kp and Mp for MnF}. We start with detailed estimates about the Gamma function.

\begin{lemma}\label{lem:Gamma estimates}
For every $p\gs 1$, for every dimension $d_n\simeq n^2$ as above, and for every $q\in [2,c_0\:\!d_n]$, the following estimates are true:
\begin{enumerate}
\item[\rm (a)]
$\displaystyle \frac{\Gamma\left(1+\frac{d_n}{p}\right)}{\Gamma\left(1+\frac{d_n+q}{p}\right)} = \left(\frac{d_n+p+q}{p}\right)^{-q/p} 
\left(1 + O\left(\frac{q}{n^2}\right)\right)^q$;
\item[\rm (b)]
$\displaystyle \frac{C_2}{p\,(p+d_n)} \left(\frac{\Gamma\left(1+\frac{d_n}{p}\right)}{\Gamma\left(1+\frac{d_n+2}{p}\right)}\right)^2
\ls \left(\frac{\Gamma\left(1+\frac{d_n}{p}\right)}{\Gamma\left(1+\frac{d_n+2}{p}\right)}\right)^2 -\, 
\frac{\Gamma\left(1+\frac{d_n}{p}\right)}{\Gamma\left(1+\frac{d_n+4}{p}\right)} 
\ls \frac{C_3}{p\,d_n} 
\,\left(\frac{\Gamma\left(1+\frac{d_n}{p}\right)}{\Gamma\left(1+\frac{d_n+2}{p}\right)}\right)^2$,
\end{enumerate}
where the $O$-notation in {\rm (a)} implies constants which may depend on $p$, $d_n$ and $q$, but which are in absolute value less than some absolute constant $c_1>0$,
and where $c_0, C_2, C_3$ are positive absolute constants. 
\end{lemma}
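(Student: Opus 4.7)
The plan is to reduce everything to classical asymptotic expansions of the Gamma function, exploiting in both parts that all arguments $1 + (d_n+q)/p$ are much larger than the shift $q/p$ under the stated hypotheses.

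For part~(a), set $A = 1 + d_n/p$ and $\alpha = q/p$, so that the left-hand side is $\Gamma(A)/\Gamma(A+\alpha)$ and the target is $(A+\alpha)^{-\alpha}(1+O(q/n^2))^q$. I would start from $\log \Gamma(A+\alpha) - \log \Gamma(A) = \int_A^{A+\alpha} \psi(t)\,dt$, with $\psi$ the digamma function, and use the standard expansion $\psi(t) = \log t - 1/(2t) + O(1/t^2)$. Carrying out the integral explicitly and applying the elementary bound $0 \ls u - \log(1+u) \ls u^2/2$ to the term $A \log(1+\alpha/A) - \alpha$, this log-difference equals $\alpha \log(A+\alpha) + E$ with $|E| \ls C(\alpha^2/A + \alpha/A)$. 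Using $d_n \simeq n^2$, $p \gs 1$, and $q \in [2, c_0 d_n]$, one checks that $\alpha^2/A = q^2/(p(p+d_n)) \ls q^2/n^2$ and $\alpha/A = q/(p+d_n) \ls q/n^2 \ls q^2/n^2$ (the last step using $q \gs 2$); hence $|E| = O(q^2/n^2)$. Exponentiating and noting that $\exp(O(q^2/n^2)) = (1 + O(q/n^2))^q$ uniformly (since $c_0$ keeps $q/n^2$ bounded) delivers the claim.

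For part~(b), set $B = 1 + (d_n+2)/p$ and $C = 1 + (d_n+4)/p$. The structural identities $A + C = 2B$ and $B - A = C - B = 2/p$ are the crux. Factoring out $R := \Gamma(A)/\Gamma(B)$, the quantity in question equals $R^2 \bigl[1 - \Gamma(B)^2/(\Gamma(A)\Gamma(C))\bigr]$. Taylor-expanding $\log \Gamma$ to second order around $B$ (with the mean-value form of the remainder) on each of $A$ and $C$ and adding, the first-order terms cancel thanks to $A + C = 2B$, yielding
\[
\log \frac{\Gamma(A)\Gamma(C)}{\Gamma(B)^2} = \frac{2}{p^2}\bigl(\psi'(\xi_1) + \psi'(\xi_2)\bigr)
\]
for some $\xi_1 \in (A,B)$, $\xi_2 \in (B,C)$. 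The trigamma function satisfies $1/(t+1) \ls \psi'(t) \ls 1/t + 1/t^2$ (immediate from $\psi'(t) = \sum_{k \gs 0}(t+k)^{-2}$), so each $\psi'(\xi_i)$ is of order $p/(p+d_n)$. Hence the log-ratio is $\simeq 1/(p(p+d_n))$, uniformly $O(1/n^2)$, and the linearisation $1 - e^{-x} \simeq x$ for small positive $x$ gives $1 - \Gamma(B)^2/(\Gamma(A)\Gamma(C)) \simeq 1/(p(p+d_n))$. This directly yields the lower bound $C_2 R^2/(p(p+d_n))$; the upper bound $C_3 R^2/(p d_n)$ follows (with some loss) from $p + d_n \gs d_n$.

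The main obstacle is the error bookkeeping in part~(a): one must verify that each contribution in $E$, after exponentiation and comparison with $(1 + O(q/n^2))^q$, fits inside a uniform constant, which only works because the assumption $q \gs 2$ upgrades a linear error in $q/n^2$ to the quadratic envelope $q^2/n^2$. Part~(b) is essentially routine once the identity $A + C = 2B$ is exploited to kill the first-order Taylor term.
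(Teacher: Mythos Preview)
Your proof is correct, and both parts take a genuinely different route from the paper.

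For part~(a), the paper uses Binet's integral formula $\log\Gamma(x) = (x-\tfrac12)\log x - x + \tfrac12\log(2\pi) + 2\int_0^\infty \frac{\arctan(t/x)}{e^{2\pi t}-1}\,dt$, expanding both $\log\Gamma(1+d_n/p)$ and $\log\Gamma(1+(d_n+q)/p)$ and subtracting; the bulk of the work lies in estimating the difference of the two $\arctan$-integrals via the mean value theorem. Your approach via $\int_A^{A+\alpha}\psi(t)\,dt$ and the one-line asymptotic $\psi(t)=\log t - 1/(2t)+O(1/t^2)$ is more direct and avoids that integral entirely; the only delicate point, which you flagged, is using $q\gs 2$ to absorb the $O(q/n^2)$ error into the $O(q^2/n^2)$ envelope.

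For part~(b), the paper works from Euler's infinite product $\Gamma(x)=x^{-1}\prod_{l\gs 1}(1+1/l)^x/(1+x/l)$, writes each ratio $\Gamma(A)/\Gamma(B)$ and $\Gamma(A)/\Gamma(C)$ as an infinite product, and bounds the difference of the two products by a sum $\sum_{l\gs 1}4/(pl+d_n)^2$, which is then compared to an integral. Your second-order Taylor expansion of $\log\Gamma$ around $B$, using $A+C=2B$ to cancel the first-order term and leave only a trigamma contribution $\simeq 1/(p(p+d_n))$, is considerably slicker: it makes the symmetry $A+C=2B$ do all the work and in fact yields the two-sided bound $\simeq R^2/(p(p+d_n))$ directly, from which the paper's (slightly looser) upper bound $C_3R^2/(pd_n)$ follows by $p+d_n\gs d_n$.
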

\begin{proof}
For part (a) we will use one of Binet's formulas for $\log \Gamma(x)$:
\[\log\Gamma(x) = \left(x-\frac{1}{2}\right)\log x - x + \frac{1}{2}\log(2\pi) + 2\int_0^\infty \frac{\arctan(t/x)}{e^{2\pi t} - 1}\,dt\]
for every positive $x$. Hence
\begin{align*}
&\log\frac{\Gamma\left(1+\frac{d_n}{p}\right)}{\Gamma\left(1+\frac{d_n+q}{p}\right)}
\\
=\, &\left(\frac{p+d_n}{p}-\frac{1}{2}\right)\log\left(\frac{p+d_n}{p}\right) - \frac{p+d_n}{p} + 2\int_0^\infty\frac{\arctan\bigl(pt/(p+d_n)\bigr)}{e^{2\pi t} - 1}\,dt
\\
&\quad - \left(\frac{p+d_n+q}{p}-\frac{1}{2}\right)\log\left(\frac{p+d_n+q}{p}\right) + \frac{p+d_n+q}{p} - 2\int_0^\infty\frac{\arctan\bigl(pt/(p+d_n+q)\bigr)}{e^{2\pi t} - 1}\,dt
\\
=\, & \frac{1}{2}\log\left(\frac{p+d_n+q}{p+d_n}\right) + \left[\frac{q}{p} -\frac{p+d_n}{p}\log\left(\frac{p+d_n+q}{p+d_n}\right)\right] - \frac{q}{p}\log\left(\frac{p+d_n+q}{p}\right)
\\
&\quad + 2\int_0^\infty\frac{\arctan\bigl(pt/(p+d_n)\bigr) - \arctan\bigl(pt/(p+d_n+q)\bigr)}{e^{2\pi t} - 1}\,dt.
\end{align*}
By a second-order Taylor approximation of the logarithmic funtion, we obtain
\begin{equation*} \log\left(\frac{p+d_n+q}{p+d_n}\right) = \log\left(1 + \frac{q}{p+d_n}\right) = \frac{q}{p+d_n} + O\left(\left(\frac{q}{p+d_n}\right)^2\right),\end{equation*}
and so
\[\frac{1}{2}\log\left(\frac{p+d_n+q}{p+d_n}\right) = O\left(\frac{q}{p+d_n}\right) = O\left(\frac{q}{n^2}\right),\]
and 
\[\frac{q}{p} -\frac{p+d_n}{p}\log\left(\frac{p+d_n+q}{p+d_n}\right) = O\left(\frac{q^2}{p(p+d_n)}\right).\]
On the other hand, by the mean value theorem we get, for every $t>0$,
\begin{align*}
\arctan\bigl(pt/(p+d_n)\bigr) &- \arctan\bigl(pt/(p+d_n+q)\bigr) = \frac{1}{1+(t/x_t)^2} \Big|\frac{pt}{p+d_n} - \frac{pt}{p+d_n+q}\Big|
\\
\intertext{for some $x_t\in \bigl[(p+d_n)/p, (p+d_n+q)/p\bigr]$, which makes the difference above}
& \ls \frac{(p+d_n+q)^2}{(p+d_n+q)^2 + (pt)^2} \frac{pqt}{(p+d_n)(p+d_n+q)}.
\end{align*}
Hence
\begin{align*} 
&\int_0^\infty\frac{\arctan\bigl(pt/(p+d_n)\bigr) - \arctan\bigl(pt/(p+d_n+q)\bigr)}{e^{2\pi t} - 1}\,dt
\\
\ls\ & \int_0^\infty \frac{\frac{pqt}{(p+d_n)(p+d_n+q)}}{2\pi t[1 + (2\pi t)/2! + (2\pi t)^2/3! + \cdots]} \,\frac{(p+d_n+q)^2}{(p+d_n+q)^2 + (pt)^2} 
\\
\ls\ & \int_0^\infty \frac{q}{2\pi(p+d_n)}\,\frac{1}{1+t^2}\,dt = O\left(\frac{q}{p+d_n}\right)
\end{align*}
Part (a) follows (given that $q/n^2 = O(1)$ for every $q\in [2,d_n]$, so $\exp\bigl(O(q/n^2)\bigr) = 1 + O(q/n^2))$.

\bigskip

For part (b) we will use Euler's infinite product representation for the Gamma function:
\begin{equation*} 
\Gamma(x) = \frac{1}{x} \prod_{l=1}^\infty\frac{\left(1+ \frac{1}{l}\right)^x}{1+\frac{x}{l}} 
\end{equation*}
valid for all $x>0$.
By this we obtain
\begin{align*} 
\frac{\Gamma\left(1+\frac{d_n}{p}\right)}{\Gamma\left(1+\frac{d_n+q}{p}\right)} &= 
\left(1 + \frac{q}{p+d_n}\right) \prod_{l=1}^\infty\left(1+\frac{q}{p(l+1) + d_n}\right)\, \left(1+ \frac{1}{l}\right)^{-q/p}
\\
& = \prod_{l=1}^\infty \left(1+\frac{q}{p l + d_n}\right) \, \left(1+ \frac{1}{l}\right)^{-q/p}
\end{align*}
for both $q=2$ and $q=4$, therefore
\begin{align*}
& \left(\frac{\Gamma\left(1+\frac{d_n}{p}\right)}{\Gamma\left(1+\frac{d_n+2}{p}\right)}\right)^2 - 
\frac{\Gamma\left(1+\frac{d_n}{p}\right)}{\Gamma\left(1+\frac{d_n+4}{p}\right)}  
\\
= \,& \prod_{l=1}^\infty \left(1+\frac{2}{p l + d_n}\right)^2 \, \left(1+ \frac{1}{l}\right)^{-4/p} - \prod_{l=1}^\infty \left(1+\frac{4}{p l + d_n}\right) \, \left(1+ \frac{1}{l}\right)^{-4/p}
\\
=\, & \prod_{l=1}^\infty \left(1+\frac{4}{p l + d_n} + \frac{4}{(pl+d_n)^2}\right) \, \left(1+ \frac{1}{l}\right)^{-4/p} - \prod_{l=1}^\infty \left(1+\frac{4}{p l + d_n}\right) \, \left(1+ \frac{1}{l}\right)^{-4/p}
\\
\ls \, & \sum_{l=1}^\infty \frac{4}{(pl+d_n)^2}\,\cdot  \prod_{l=1}^\infty \left(1+\frac{4}{p l + d_n} + \frac{4}{(pl+d_n)^2}\right) \, \left(1+ \frac{1}{l}\right)^{-4/p}.
\\
\ls\, & \int_0^\infty \frac{4}{(pt + d_n)^2}\,dt \,\cdot \left(\frac{\Gamma\left(1+\frac{d_n}{p}\right)}{\Gamma\left(1+\frac{d_n+2}{p}\right)}\right)^2.
\end{align*}
Similarly,
\begin{align*}
 \left(\frac{\Gamma\left(1+\frac{d_n}{p}\right)}{\Gamma\left(1+\frac{d_n+2}{p}\right)}\right)^2 - 
\frac{\Gamma\left(1+\frac{d_n}{p}\right)}{\Gamma\left(1+\frac{d_n+4}{p}\right)}  
&\gs \sum_{l=1}^\infty \frac{\frac{4}{(pl+d_n)^2}}{1+\frac{4}{pl+d_n}}\,\cdot  \prod_{l=1}^\infty \left(1+\frac{4}{p l + d_n} \right) \, \left(1+ \frac{1}{l}\right)^{-4/p}
\\
&\gtrsim \int_1^\infty \frac{4}{(pt + d_n)^2}\,dt\,\cdot \frac{\Gamma\left(1+\frac{d_n}{p}\right)}{\Gamma\left(1+\frac{d_n+4}{p}\right)}.
\end{align*}
Using also the conclusion of part (a), we deduce part (b).
\end{proof}

We are now ready to prove the following proposition (which is just a generalised version of Proposition \ref{prop:Var Kp and Mp for MnF}).

\begin{proposition} \label{prop:Var Kp and Mp} 
Let ${\rm Var}_{M_{a,b,c,p}}\bigl(\|x\|_2^2\bigr)$ $($or more briefly ${\rm Var}_{M_p}\bigl(\|x\|_2^2\bigr))$ denote the quantity
\begin{equation*} \frac{M_{a,b,c,p}\bigl(\|x\|_2^4\bigr)}{M_{a,b,c,p}(1)} - \left(\frac{M_{a,b,c,p}\bigl(\|x\|_2^2\bigr)}{M_{a,b,c,p}(1)}\right)^2,\end{equation*}
that is, the variance of the Euclidean norm with respect to the density $\exp\bigl(-\|x\|_p^p\bigr) f_{a,b,c}(x)$, where 
\begin{equation*} f_{a,b,c}(x) = \prod_{1\ls i<j\ls n}\big|\,x_i^a - x_j^a\big|^b\cdot \prod_{1\ls i\ls n}|x_i|^c.\end{equation*}
Let also $K_{p,E}$ denote the unit ball of the $p$-Schatten class in the subspace $E$ corresponding to the density $f_{a,b,c}$, 
and let
\begin{equation*} 
\sigma_{K_{p,E}}^2 = d_n\, 
\frac{{\rm Var}_{\overline{K}_{p,E}}\bigl(\|s(T)\|_2^2\bigr)}{\left[{\mathbb E}_{\overline{K}_{p,E}}\bigl(\|s(T)\|_2^2\bigr)\right]^2}.
\end{equation*}
The following relation is true:
\begin{equation} \label{eq:prop:Var Kp and Mp}
{\rm Var}_{M_{a,b,c,p}}\bigl(\|x\|_2^2\bigr) \simeq \max\Bigl\{\sigma_{K_{p,E}}^2, \frac{1}{p}\Bigr\}\cdot n^{4/p},\end{equation}
where the implied constants depend only on the integers $a,b,c$.
\begin{remark} \label{rem:Var Kp and Mp}
The proposition shows that $K_{p,E}$ satisfies the thin-shell conjecture, in other words, 
$\sigma_{K_{p,E}}^2 \ls C$ for some absolute constant $C$, 
if and only if ${\rm Var}_{M_{a,b,c,p}}\bigl(\|x\|_2^2\bigr) \ls C^\prime n^{4/p}$ (for some other absolute constant depending linearly on $C$).

In fact, using the best bounds for $\sigma_{K_{p,E}}$ that we currently have, 
which are due to Barthe and Cordero-Erausquin \cite{Barthe-Cordero-2013}, 
we can infer the following estimate right now:
for all $p$ and for all subspaces $E$ for which we know that $K_{p,E}$ is in isotropic position 
(these include ${\cal M}_n({\mathbb R})$, ${\cal M}_n({\mathbb C})$ and ${\cal M}_n({\mathbb H})$),
 \begin{equation} {\rm Var}_{M_p}\bigl(\|x\|_2^2\bigr) \lesssim n\cdot n^{4/p} = o(n^2)\cdot n^{4/p}.\end{equation}
 We are going to take advantage of this estimate in the sequel.
\end{remark}
\begin{remark}
Note that in \eqref{eq:prop:Var Kp and Mp} the estimate
\begin{equation}\label{eq:rem:prop:Var Kp and Mp} {\mathbb E}_{\overline{K}_{p,E}}\bigl(\|s(T)\|_2^2\bigr) \simeq d_n,\end{equation}
which follows from the arguments of \cite{Konig-Meyer-Pajor-1998} and \cite{Guedon-Paouris-2007},
and is valid for all of the classical subspaces $E$ we have mentioned, is already incorporated. If we 
do not use it yet, then, as will be clear from the ensuing proof, we will get
\begin{align*} {\rm Var}_{M_p}\bigl(\|x\|_2^2\bigr) 
&\simeq \max\Bigl\{\sigma_{K_{p,E}}^2, \frac{1}{p}\Bigr\}\cdot d_n^{4/p-1}\, |K_{p,E}|^{4/d_n}\, \left[{\mathbb E}_{\overline{K}_{p,E}}\bigl(\|s(T)\|_2^2\bigr)\right]^2
\\
&\simeq \max\Bigl\{\sigma_{K_{p,E}}^2, \frac{1}{p}\Bigr\}\cdot \frac{1}{d_n^2}\,\left[{\mathbb E}_{\overline{K}_{p,E}}\bigl(\|s(T)\|_2^2\bigr)\right]^2\cdot n^{4/p},
\end{align*}
where the last equivalence follows from the volume estimates for 
the unit balls of the Schatten classes provided by \cite[Proposition 3]{Guedon-Paouris-2007} 
(see end of Section \ref{sec:Notation}).
\end{remark}
\begin{proof}
Note that by \eqref{eq:rem:prop:Var Kp and Mp} we have
\begin{equation*} {\rm Var}_{\overline{K}_{p,E}}\bigl(\|s(T)\|_2^2\bigr) 
= \frac{1}{d_n} \left[{\mathbb E}_{\overline{K}_{p,E}}\bigl(\|s(T)\|_2^2\bigr)\right]^2 \cdot \sigma_{K_{p,E}}^2 \simeq \sigma_{K_{p,E}}^2\cdot d_n.
\end{equation*}
But
\begin{align*}
{\rm Var}_{\overline{K}_{p,E}}\bigl(\|s(T)\|_2^2\bigr) 
&= \int_{\overline{K}_{p,E}} \|s(T)\|_2^4\,dT - 
\left(\int_{\overline{K}_{p,E}} \|s(T)\|_2^2\,dT\right)^2
\\
& = \frac{1}{|K_{p,E}|^{1+\frac{4}{d_n}}} \int_{K_{p,E}} \|s(T)\|_2^4\,dT 
 -  \left(\frac{1}{|K_{p,E}|^{1+\frac{2}{d_n}}}\int_{K_{p,E}} \|s(T)\|_2^2\,dT\right)^2
 \\
 & = \frac{1}{|K_{p,E}|^{4/d_n}}\,\left[\frac{1}{|K_{p,E}|} \int_{K_{p,E}} \|s(T)\|_2^4\,dT - \left(\frac{1}{|K_{p,E}|}\int_{K_{p,E}} \|s(T)\|_2^2\,dT\right)^2\right].
\end{align*}
Given that $|K_{p,E}|^{4/d_n} \simeq d_n^{-1-2/p}$, we therefore obtain that
\begin{align}
\nonumber
\sigma_{K_{p,E}}^2\cdot d_n^{-2/p} \ &\simeq\  |K_{p,E}|^{4/d_n}\cdot {\rm Var}_{\overline{K}_{p,E}}\bigl(\|s(T)\|_2^2\bigr) 
\\ \nonumber
& =\ \frac{1}{|K_{p,E}|} \int_{K_{p,E}} \|s(T)\|_2^4\,dT - \left(\frac{1}{|K_{p,E}|}\int_{K_{p,E}} \|s(T)\|_2^2\,dT\right)^2,
\\
\intertext{which, by use of Lemmas \ref{lem:reduction to n-integrals1} and its counterparts for the other subspaces,
becomes}
\nonumber
&=\ \frac{\Gamma\left(1+\frac{d_n}{p}\right)}{\Gamma\left(1+\frac{d_n+4}{p}\right)}\,\frac{M_p\bigl(\|x\|_2^4\bigr)}{M_p(1)} - 
\left(\frac{\Gamma\left(1+\frac{d_n}{p}\right)}{\Gamma\left(1+\frac{d_n+2}{p}\right)}\right)^2\,\left(\frac{M_p\bigl(\|x\|_2^2\bigr)}{M_p(1)}\right)^2
\\ \label{eqp1:prop:Var Kp and Mp} 
 &=\ \frac{\Gamma\left(1+\frac{d_n}{p}\right)}{\Gamma\left(1+\frac{d_n+4}{p}\right)}\,\left[\frac{M_p\bigl(\|x\|_2^4\bigr)}{M_p(1)} - \left(\frac{M_p\bigl(\|x\|_2^2\bigr)}{M_p(1)}\right)^2\right] 
 \\ \label{eqp2:prop:Var Kp and Mp} 
 &\qquad\qquad -  \left[\left(\frac{\Gamma\left(1+\frac{d_n}{p}\right)}{\Gamma\left(1+\frac{d_n+2}{p}\right)}\right)^2- 
 \frac{\Gamma\left(1+\frac{d_n}{p}\right)}{\Gamma\left(1+\frac{d_n+4}{p}\right)}\right]\,\left(\frac{M_p\bigl(\|x\|_2^2\bigr)}{M_p(1)}\right)^2.
\end{align}
Now we use the estimates of Lemma \ref{lem:Gamma estimates} (without needing yet the more accurate form in which we stated them): 
the term in \eqref{eqp1:prop:Var Kp and Mp} can be rewritten as
\begin{equation*}
\frac{\Gamma\left(1+\frac{d_n}{p}\right)}{\Gamma\left(1+\frac{d_n+4}{p}\right)}\ {\rm Var}_{M_p}\bigl(\|x\|_2^2\bigr) \simeq d_n^{-4/p} \cdot {\rm Var}_{M_p}\bigl(\|x\|_2^2\bigr),
\end{equation*}
and since the term in \eqref{eqp2:prop:Var Kp and Mp} is negative, we get
\begin{equation} \label{eqp3:prop:Var Kp and Mp}
{\rm Var}_{M_p}\bigl(\|x\|_2^2\bigr) \gtrsim \sigma_{K_{p,E}}^2\cdot d_n^{2/p}\simeq \sigma_{K_{p,E}}^2\cdot n^{4/p}.\end{equation}
In addition, since the sum of the terms in \eqref{eqp1:prop:Var Kp and Mp} and \eqref{eqp2:prop:Var Kp and Mp} is equal to a positive quantity, we obtain
\begin{align*}
d_n^{-4/p} \cdot {\rm Var}_{M_p}\bigl(\|x\|_2^2\bigr)
&\gtrsim \left[\left(\frac{\Gamma\left(1+\frac{d_n}{p}\right)}{\Gamma\left(1+\frac{d_n+2}{p}\right)}\right)^2- 
 \frac{\Gamma\left(1+\frac{d_n}{p}\right)}{\Gamma\left(1+\frac{d_n+4}{p}\right)}\right]\,\left(\frac{M_p\bigl(\|x\|_2^2\bigr)}{M_p(1)}\right)^2
 \\
 &\gs \frac{C_2}{p(p+d_n)}\, \left(\frac{\Gamma\left(1+\frac{d_n}{p}\right)}{\Gamma\left(1+\frac{d_n+2}{p}\right)}\right)^2 \,\left(\frac{M_p\bigl(\|x\|_2^2\bigr)}{M_p(1)}\right)^2
 \\
 & = \frac{C_2}{p(p+d_n)}\ |K_{p,E}|^{4/d_n}\,\left[{\mathbb E}_{\overline{K}_{p,E}}\bigl(\|s(T)\|_2^2\bigr)\right]^2
 \\
 &\simeq \frac{C_2}{p(p+d_n)}\,d_n^{1-2/p}.
\end{align*}
This shows that ${\rm Var}_{M_p}\bigl(\|x\|_2^2\bigr)\gtrsim \frac{1}{p}\cdot d_n^{2/p} \simeq \frac{1}{p}\cdot n^{4/p}$ if $1\ls p\lesssim d_n$, whereas if $p\gtrsim d_n$
then $\max\{\sigma_{K_{p,E}}^2, 1/p\} = \sigma_{K_{p,E}}^2$ given that for every 
centred convex body in a $d_n$-dimensional space we have
\begin{equation*} \sigma_{K_{p,E}}^2 \gs \sigma_{B_2^{d_n}}^2 = \frac{4}{d_n+4}\end{equation*}
(see \cite[Theorem 2]{Bobkov-Koldobsky-2003}). Combining with \eqref{eqp3:prop:Var Kp and Mp}, we conclude that
\begin{equation}\label{eqp4:prop:Var Kp and Mp} {\rm Var}_{M_p}\bigl(\|x\|_2^2\bigr)\gtrsim \max\Bigl\{\sigma_{K_{p,E}}^2,\frac{1}{p}\Bigr\}\cdot n^{4/p}.\end{equation}

\smallskip

In the opposite direction, we have
\begin{align*}
d_n^{-4/p} \cdot {\rm Var}_{M_p}\bigl(\|x\|_2^2\bigr) &\lesssim \sigma_{K_{p,E}}^2\cdot d_n^{-2/p} +
\left[\left(\frac{\Gamma\left(1+\frac{d_n}{p}\right)}{\Gamma\left(1+\frac{d_n+2}{p}\right)}\right)^2- 
 \frac{\Gamma\left(1+\frac{d_n}{p}\right)}{\Gamma\left(1+\frac{d_n+4}{p}\right)}\right]\,\left(\frac{M_p\bigl(\|x\|_2^2\bigr)}{M_p(1)}\right)^2
 \\
 &\ls \sigma_{K_{p,E}}^2\cdot d_n^{-2/p} + \frac{C_3}{p\,d_n}\  |K_{p,E}|^{4/d_n}\,\left[{\mathbb E}_{\overline{K}_{p,E}}\bigl(\|s(T)\|_2^2\bigr)\right]^2
 \\
 &\simeq \sigma_{K_{p,E}}^2\cdot d_n^{-2/p} + \frac{C_3}{p\,d_n}\,d_n^{1-2/p},
\end{align*}
whence we obtain the reverse inequality to \eqref{eqp4:prop:Var Kp and Mp}.
This completes the proof of the proposition.
\end{proof}
\end{proposition}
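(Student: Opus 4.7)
The plan is to translate the quantity $\sigma_{K_{p,E}}^2$, which is defined via integrals against the uniform measure on $\overline{K}_{p,E}$, into an expression involving $M_p$-integrals on ${\mathbb R}^n$, so that the relationship with ${\rm Var}_{M_p}(\|x\|_2^2)$ becomes visible. To do that, I would apply Lemma~\ref{lem:reduction to n-integrals1} (and its analogues for the other subspaces) to the three positively homogeneous, symmetric functions $F=1$, $F(s)=\|s\|_2^2$ and $F(s)=\|s\|_2^4$, of degrees $0,2,4$. This expresses $|K_{p,E}|$, $\int_{K_{p,E}}\|s(T)\|_2^2\,dT$ and $\int_{K_{p,E}}\|s(T)\|_2^4\,dT$ as $c_{n,E}/\Gamma(1+(d_n+k)/p)$ times $M_p(\|x\|_2^k)$ for $k=0,2,4$ respectively.

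Next, I would rewrite $|K_{p,E}|^{4/d_n}\cdot{\rm Var}_{\overline{K}_{p,E}}(\|s\|_2^2)$ as
\begin{equation*}
\frac{\Gamma(1+d_n/p)}{\Gamma(1+(d_n+4)/p)}\,{\rm Var}_{M_p}(\|x\|_2^2)
\;-\;\Bigl[\bigl(\tfrac{\Gamma(1+d_n/p)}{\Gamma(1+(d_n+2)/p)}\bigr)^{2}-\tfrac{\Gamma(1+d_n/p)}{\Gamma(1+(d_n+4)/p)}\Bigr]\cdot\Bigl(\tfrac{M_p(\|x\|_2^2)}{M_p(1)}\Bigr)^{2}.
\end{equation*}
On the left-hand side, the volume estimate from Fact~5 gives $|K_{p,E}|^{4/d_n}\simeq d_n^{-1-2/p}$, and combined with ${\mathbb E}_{\overline{K}_{p,E}}(\|s\|_2^2)\simeq d_n$ this yields $|K_{p,E}|^{4/d_n}{\rm Var}_{\overline{K}_{p,E}}(\|s\|_2^2)\simeq \sigma_{K_{p,E}}^2\cdot d_n^{-2/p}$.

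Now the asymptotics of the Gamma ratios from Lemma~\ref{lem:Gamma estimates} do the bookkeeping: part~(a) shows that the ratio on the ${\rm Var}_{M_p}$ term is $\simeq d_n^{-4/p}\simeq n^{-8/p}$, and similarly $(M_p(\|x\|_2^2)/M_p(1))^2\simeq d_n^{1+2/p}$; part~(b) then pins down the difference of Gamma ratios between $\frac{c}{p(p+d_n)}$ and $\frac{C}{p\,d_n}$ times $(\Gamma(1+d_n/p)/\Gamma(1+(d_n+2)/p))^{2}$. Since this correction term is subtracted (i.e.\ has a negative sign in the identity), the lower bound ${\rm Var}_{M_p}(\|x\|_2^2)\gtrsim \sigma_{K_{p,E}}^{2}\cdot n^{4/p}$ falls out immediately by dropping the correction. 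Multiplying everything out, the correction itself has order $(1/p)\cdot n^{-4/p}$ on the left-hand scale, so rearranging and using the identity the other way gives the upper bound ${\rm Var}_{M_p}(\|x\|_2^2)\lesssim(\sigma_{K_{p,E}}^{2}+1/p)\cdot n^{4/p}$.

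The one remaining subtlety is the $\max\{\sigma_{K_{p,E}}^2,1/p\}$ lower bound: when $p\lesssim d_n$ the correction term by itself already contributes the $(1/p)\cdot n^{4/p}$ piece (its sign relative to the other positive quantities forces the estimate), and when $p\gtrsim d_n$ the Bobkov--Koldobsky inequality $\sigma_{K_{p,E}}^{2}\gs\sigma_{B_2^{d_n}}^{2}=4/(d_n+4)\gtrsim 1/p$ makes the $\sigma^2$ term already dominate $1/p$, so the two pieces can be merged into a single $\max$. I expect the main technical obstacle to be carrying the Gamma-ratio asymptotics through cleanly — in particular, verifying that the difference in part~(b) is indeed both an upper and lower bound of the right order (and not merely one-sided), because this is precisely what forces the $1/p$ term to appear in the final equivalence rather than only in a one-sided inequality.
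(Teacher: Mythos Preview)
Your proposal is correct and follows essentially the same approach as the paper: you derive the same key identity expressing $|K_{p,E}|^{4/d_n}\cdot{\rm Var}_{\overline{K}_{p,E}}(\|s\|_2^2)$ as the ${\rm Var}_{M_p}$ term minus the Gamma-ratio correction, then use the two-sided bounds of Lemma~\ref{lem:Gamma estimates}(b) together with the sign of the correction to extract both directions, and finally invoke Bobkov--Koldobsky to absorb the $1/p$ term into the $\max$ when $p\gtrsim d_n$. The paper's proof proceeds in exactly this order with the same ingredients.
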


\bigskip

As mentioned in the Introduction, our task now becomes to find good estimates for the quantity ${\rm Var}_{M_p}\bigl(\|x\|_2^2\bigr)$,
and ideally to show that it is $O(n^{4/p})$. One of our tools towards this goal is Lemma \ref{lem:int-by-parts} that was stated in the Introduction;
it will become apparent that one other thing we need is to be able to relate integrals of the form $M_p\bigl(\|x\|_p^l f(x)\bigr)$, where $l$ is some
real number, to each other.

\begin{lemma}\label{lem:multiples of p-norm}
Let $l,s\in {\mathbb R}$ be such that $s > -d_n$ and $l+s > -d_n$. Suppose also that $f: {\mathbb R}^n\setminus\{0\} \to {\mathbb R}$ is 
a continuous, $s$-homogeneous function. Then
\begin{equation*} M_p\bigl(\|x\|_p^l f(x)\bigr) = \frac{\Gamma\left(\frac{d_n+l+s}{p}\right)}{\Gamma\left(\frac{d_n+s}{p}\right)} M_p(f).\end{equation*}
\end{lemma}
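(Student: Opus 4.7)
The plan is to use polar-type coordinates adapted to the $\ell_p$-norm, exploiting the fact that every factor in the integrand is homogeneous in $x$. Writing each point $x\in {\mathbb R}^n\setminus\{0\}$ as $x = \rho\omega$ with $\rho = \|x\|_p > 0$ and $\omega$ on the $\ell_p$-sphere $S_p^{n-1} = \{\|x\|_p = 1\}$, we get the factorisation $dx = \rho^{n-1}\,d\rho\,d\mu(\omega)$ for a suitable finite measure $\mu$ on $S_p^{n-1}$. Under this decomposition the three homogeneous pieces split off cleanly: $\|x\|_p^l = \rho^l$, the hypothesis makes $f(x) = \rho^s f(\omega)$, and a direct inspection of $f_{a,b,c}$ shows it is $(d_n-n)$-homogeneous (indeed, $f_{a,b,c}(rx) = r^{abn(n-1)/2 + cn}\,f_{a,b,c}(x)$, and $abn(n-1)/2 + cn = d_n - n$ by the dimension identity noted after Lemma \ref{lem:reduction to n-integrals1}).

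The first step, then, is to combine these observations to write
\begin{equation*}
M_p\bigl(\|x\|_p^l f(x)\bigr) = \left(\int_0^\infty \rho^{d_n+l+s-1}e^{-\rho^p}\,d\rho\right)\cdot \int_{S_p^{n-1}} f(\omega)\,f_{a,b,c}(\omega)\,d\mu(\omega),
\end{equation*}
and, analogously (take $l=0$),
\begin{equation*}
M_p(f) = \left(\int_0^\infty \rho^{d_n+s-1}e^{-\rho^p}\,d\rho\right)\cdot \int_{S_p^{n-1}} f(\omega)\,f_{a,b,c}(\omega)\,d\mu(\omega).
\end{equation*}
The hypotheses $s > -d_n$ and $l+s > -d_n$ are exactly what is needed so that the radial integrals converge at $0$ (convergence at infinity is automatic from the $e^{-\rho^p}$ factor), and so also the angular integral is finite. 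The second step is the substitution $u = \rho^p$, which gives
\begin{equation*}
\int_0^\infty \rho^{\alpha-1}e^{-\rho^p}\,d\rho = \tfrac{1}{p}\,\Gamma(\alpha/p)
\end{equation*}
for any $\alpha>0$. Applying this with $\alpha = d_n+l+s$ and $\alpha = d_n+s$ and taking the quotient, the (common) angular integral cancels and the factors of $1/p$ cancel, leaving the claimed ratio $\Gamma((d_n+l+s)/p)/\Gamma((d_n+s)/p)$.

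There is no serious obstacle here; the only mild technical point is justifying the polar decomposition when $f$ or $f_{a,b,c}$ vanishes on certain hyperplanes (e.g.\ the coordinate hyperplanes, or the diagonals $x_i^a = x_j^a$). Since those loci have Lebesgue measure zero and the combined integrand is non-negative (or absolutely integrable under the stated conditions), one can carry out the decomposition off that negligible set and pull everything back together by Fubini without incident.
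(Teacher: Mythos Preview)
Your proof is correct and follows essentially the same route as the paper's: a polar decomposition with respect to the $\ell_p$-norm, using the homogeneity of $f$ and of $f_{a,b,c}$ to separate the radial and spherical parts, then evaluating the radial integrals as Gamma functions and cancelling the common angular factor. The only cosmetic difference is that the paper phrases the spherical measure as the cone-volume measure on $\partial B_p^n$, whereas you leave it as an unspecified finite measure $\mu$; since this factor cancels in the ratio, the distinction is immaterial.
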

\begin{proof}
We use a polar integration formula. Since both $f$ and $\|x\|_p^l\, f(x)$ are positively homogeneous functions of order $s$ and $l+s$ respectively,
we have
\begin{align*} 
M_p(f) &= \int_{{\mathbb R}^n} f(x) \cdot f_{a,b,c}(x)\exp\bigl(-\|x\|_p^p\bigr)\,dx
\\
&= n\,{\rm vol}(B_p^n) \int_0^\infty r^{d_n +s -1} e^{-r^p} \int_{\partial B_p^n} f(u)f_{a,b,c}(u)\,d\mu_{B_p^n}(u)\,dr
\\
& = n\, {\rm vol}(B_p^n)\,\frac{\Gamma\left(\frac{d_n+s}{p}\right)}{p}\,\int_{\partial B_p^n} f(u)f_{a,b,c}(u)\,d\mu_{B_p^n}(u),
\end{align*}
and similarly
\begin{align*} 
M_p\bigl(\|x\|_p^l f(x)\bigr) &= \int_{{\mathbb R}^n} \|x\|_p^l\, f(x) \cdot f_{a,b,c}(x)\exp\bigl(-\|x\|_p^p\bigr)\,dx
\\
&= n\, {\rm vol}(B_p^n) \int_0^\infty r^{d_n +l + s -1} e^{-r^p} \int_{\partial B_p^n} \|u\|_p^l\,f(u)f_{a,b,c}(u)\,d\mu_{B_p^n}(u)\,dr
\\
& = n\, {\rm vol}(B_p^n)\,\frac{\Gamma\left(\frac{d_n+l+s}{p}\right)}{p}\,\int_{\partial B_p^n} f(u)f_{a,b,c}(u)\,d\mu_{B_p^n}(u),
\end{align*}
where $\mu_{B_p^n}$ is a type of cone-volume measure (see e.g. \cite{Naor-Romik-2003})
on the boundary $\partial B_p^n$ of $B_p^n$, which is defined by
\begin{equation*} \mu_{B_p^n}(A) := \frac{|\{tu: u\in A, 0\ls t\ls 1\}|}{|B_p^n|}.\end{equation*}
This completes the proof.
\end{proof}

Note that the case $l=p$ also follows from Lemma \ref{lem:int-by-parts} applied with $\xi = 0$ 
(and at first with functions $f$ that satisfy the hypotheses 
of the lemma; see also \cite[Corollary 7(a)]{Guedon-Paouris-2007} 
for a different proof of the case $l=p$ that works directly for arbitrary continuous functions). 
However, in what follows, we will need to make use of other cases of Lemma \ref{lem:multiples of p-norm} too.

\subsection{Orders of magnitude of relevant quantities}

Given that, by symmetry,
\begin{align} 
\nonumber
{\rm Var}_{M_p}\bigl(\|x\|_2^2\bigr) &= \frac{M_p\bigl(\|x\|_2^4\bigr)}{M_p(1)} - \left(\frac{M_p\bigl(\|x\|_2^2\bigr)}{M_p(1)}\right)^2 
\\ \label{eq:relevant-quantities}
& = n\cdot  \frac{M_p\bigl(x_1^4\bigr)}{M_p(1)} + n(n-1) \cdot \frac{M_p\bigl(x_1^2x_2^2\bigr)}{M_p(1)} - n^2\cdot \left(\frac{M_p\bigl(x_1^2\bigr)}{M_p(1)}\right)^2 
\\ \nonumber
& = n\cdot {\rm Var}_{M_p}\bigl(x_1^2\bigr) + n(n-1)\cdot \left[\frac{M_p\bigl(x_1^2x_2^2\bigr)}{M_p(1)} - \left(\frac{M_p\bigl(x_1^2\bigr)}{M_p(1)}\right)^2\right],
\end{align}
our first objective thus becomes to study the order of magnitude of the quantities $M_p(x_1^2)/M_p(1)$, $M_p(x_1^4)/M_p(1)$ and $M_p(x_1^2x_2^2)/M_p(1)$.

To this end we will use Lemma \ref{lem:int-by-parts} that was stated in the Introduction, 
and has been used for the exact same purpose both in \cite{Konig-Meyer-Pajor-1998} and in \cite{Guedon-Paouris-2007}.

\begin{remark}
In \cite{Konig-Meyer-Pajor-1998} the authors apply Lemma \ref{lem:int-by-parts} with $\xi = 0$ or $\xi = p$ and with $f(x) =1$.
On the other hand, the authors of \cite{Guedon-Paouris-2007} have to apply the lemma in more general cases as well:
they obtain recursive equivalences using the lemma with $\xi = 2$ or $\xi = p$ and 
with $f$ being different powers of the Euclidean norm.

In both situations, it turns out that the most 
bothersome to deal with term in \eqref{eq1:lem:int-by-parts} 
is the last one:
the way they estimate it in both of the abovementioned papers is by observing that
\begin{equation}\label{eq:equiv-for-last-term} 
\zeta_1(a,\xi)\cdot \bigl(|x_i|^\xi + |x_j|^\xi\bigr) \ls \frac{|x_i|^\xi\,x_i^a - |x_j|^\xi\,x_j^a}{x_i^a - x_j^a} 
\ls \zeta_2(a,\xi)\cdot \bigl(|x_i|^\xi + |x_j|^\xi\bigr)
\end{equation}
for all $x_i\neq x_j$, where $\zeta_1(a,\xi) = \min\bigl\{\frac{1}{2},\frac{a+\xi}{2a}\bigr\}$ (or, if $a$ is even, 
$\zeta_1(a,\xi) = \min\bigl\{1,\frac{a+\xi}{2a}\bigr\}$), and $\zeta_2(a,\xi) = \max\bigl\{1,\frac{a+\xi}{2a}\bigr\}$, 
and then by writing 
\begin{align*} M_p\left(f(x)\sum_{i=1}^n\sum_{j\neq i}\frac{|x_i|^\xi\,x_i^a}{x_i^a - x_j^a}\right)
&= M_p\left(f(x)\sum_{i<j}\frac{|x_i|^\xi\,x_i^a - |x_j|^\xi\,x_j^a}{x_i^a - x_j^a}\right) 
\\
&\simeq \zeta_i(a,\xi)\cdot M_p\left(f(x)\sum_{i<j}\Bigl(|x_i|^\xi + |x_j|^\xi\Bigr)\right) 
\\
&= \zeta_i(a,\xi)\cdot (n-1)\,M_p\left(f(x)\|x\|_\xi^\xi\right).
\end{align*}
In this way Lemma \ref{lem:int-by-parts} leads to
\begin{align} 
\label{eq:conclusion of lem:int-by-parts}
\left(\zeta_1(a,\xi)\, \frac{d_n}{n} + \xi\right) M_p\left(f(x)\|x\|_\xi^\xi\right) 
&\lesssim pM_p\left(\|x\|_{\xi + p}^{\xi + p}f(x)\right) - M_p\left(\sum_{i=1}^n|x_i|^\xi x_i\frac{\partial f}{\partial x_i}(x)\right) 
\\ \nonumber
&\lesssim \zeta_2(a,\xi)\, \frac{d_n}{n} M_p\left(f(x)\|x\|_\xi^\xi\right)
\end{align}
for any positive function $f$ satisfying the assumptions of the lemma. 
\end{remark}

\begin{proposition}\label{prop:order-of-magnitudes}
Let $M_p$ denote integration over ${\mathbb R}^n$ with respect to one of the densities $f_{a,b,c,p}$ of the form
\begin{equation*} 
f_{a,b,c,p}(x) = \exp\bigl(-\|x\|_p^p\bigr)\cdot\prod_{1\ls i<j\ls n}\big|\,x_i^a - x_j^a\big|^b\cdot \prod_{1\ls i\ls n}|x_i|^c,
\end{equation*}
where $p\in [1,+\infty)$ and $a,b$ are positive integers, $c$ is a non-negative integer. We have
\begin{equation}\label{eq:prop:order-of-magnitudes}  \frac{M_p\bigl(x_1^2\bigr)}{M_p(1)}\simeq n^{2/p} \quad\quad\hbox{and}\quad\quad
\frac{M_p\bigl(x_1^4\bigr)}{M_p(1)} \simeq \frac{M_p\bigl(x_1^2x_2^2\bigr)}{M_p(1)} \simeq n^{4/p}.
\end{equation}
\end{proposition}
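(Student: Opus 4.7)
The first estimate $M_p(x_1^2)/M_p(1)\simeq n^{2/p}$ is the easy part. By exchangeability of the coordinates under $f_{a,b,c,p}$ we have $M_p(x_1^2) = \frac{1}{n}M_p(\|x\|_2^2)$, so it suffices to compute $M_p(\|x\|_2^2)/M_p(1)$. Running the chain of equalities of Proposition \ref{prop:Var Kp and Mp} in reverse identifies this ratio with $\frac{\Gamma(1+(d_n+2)/p)}{\Gamma(1+d_n/p)}\cdot \mathbb{E}_{K_{p,E}}(\|s(T)\|_2^2)$; applying Lemma \ref{lem:Gamma estimates}(a) to the Gamma ratio, Fact 5 for the volume of $K_{p,E}$, and the estimate \eqref{eq:rem:prop:Var Kp and Mp} (together with the change-of-scale $\mathbb{E}_{K_{p,E}} = |K_{p,E}|^{2/d_n}\mathbb{E}_{\overline{K}_{p,E}}$) to obtain $\mathbb{E}_{K_{p,E}}(\|s(T)\|_2^2)\simeq n^{1-2/p}$, one lands at $M_p(\|x\|_2^2)/M_p(1)\simeq n^{4/p}\cdot n^{1-2/p} = n^{1+2/p}$, and dividing by $n$ yields the claim.

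For the other two moments, set $A:=M_p(x_1^4)/M_p(1)$ and $B:=M_p(x_1^2x_2^2)/M_p(1)$. By symmetry $M_p(\|x\|_2^4)/M_p(1) = nA + n(n-1)B$, and this sum is $\simeq n^{2+4/p}$: Jensen's inequality applied to $\|x\|_2^2$ forces it to be at least $(M_p(\|x\|_2^2)/M_p(1))^2 \simeq n^{2+4/p}$, and the Barthe--Cordero-Erausquin variance bound \eqref{eq:Barthe-Cordero-bound}, passed through Proposition \ref{prop:Var Kp and Mp}, supplies the matching upper bound. Cauchy--Schwarz yields the easy inequalities $A\gs (M_p(x_1^2)/M_p(1))^2 \simeq n^{4/p}$ and $B\ls \sqrt{M_p(x_1^4)M_p(x_2^4)} = A$, and substituting $B\ls A$ into $nA+n(n-1)B \simeq n^{2+4/p}$ immediately gives $B\lesssim n^{4/p}$.

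The remaining step is to upper-bound $A$ by $Cn^{4/p}$; once this is settled, the matching lower bound $B \gs cn^{4/p}$ follows by plugging $A\lesssim n^{4/p}$ back into $nA+n(n-1)B \simeq n^{2+4/p}$. My plan is to apply Lemma \ref{lem:int-by-parts} with $\xi = 2$ and $f(x) = x_1^2$; using the equivalence \eqref{eq:equiv-for-last-term} to control the sum $\sum_{i\neq j}|x_i|^2 x_i^a/(x_i^a-x_j^a)$, this should give
\begin{equation*}
(d_n/n + O(1))\,\bigl(A+(n-1)B\bigr) + 2A \,\simeq\, p\cdot M_p\bigl(\|x\|_{p+2}^{p+2}\,x_1^2\bigr)\big/M_p(1).
\end{equation*}
The right-hand side can then be attacked by a second integration-by-parts step (for example with $\xi = p$ and $f = x_1^2$, combined with Lemma \ref{lem:multiples of p-norm} to absorb the $\|x\|_p^p$-factors), producing a relation involving the baseline moments $M_p(x_1^{p+2})/M_p(1)$ and $M_p(x_1^{p+4})/M_p(1)$; these, in turn, can be evaluated from the $f=1$ case of Lemma \ref{lem:int-by-parts} used with $\xi=2$ and $\xi=4$.

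The hard part I anticipate is this closure. Each successive application of the integration-by-parts lemma raises the weights (introducing terms like $\|x\|_{2p}^{2p}x_1^2$ or $x_1^{p+4}$), and one has to arrange the telescoping so that the Vandermonde repulsion built into $f_{a,b,c}$ is genuinely exploited---this repulsion being the mechanism that concentrates each coordinate $|x_i|$ near its typical size $n^{1/p}$ and thereby prevents $A$ from being inflated by a tail of ``spiky'' configurations. Without such an input, Cauchy--Schwarz and the constraint $nA+n(n-1)B \simeq n^{2+4/p}$ by themselves only pin things down to $A\in[n^{4/p}, n^{1+4/p}]$ and $B\in[0, n^{4/p}]$, which is not enough to conclude.
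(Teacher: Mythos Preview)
Your route to the first estimate is circular in the context of this proposition. The input $\mathbb{E}_{\overline{K}_{p,E}}\bigl(\|s(T)\|_2^2\bigr)\simeq d_n$ from \eqref{eq:rem:prop:Var Kp and Mp} is precisely the K\"onig--Meyer--Pajor result, and via Lemma~\ref{lem:reduction to n-integrals1} it is \emph{equivalent} to $M_p(x_1^2)/M_p(1)\simeq n^{2/p}$, which is the statement to be proved. More seriously, the proposition is stated for \emph{arbitrary} positive integers $a,b$ and non-negative integer $c$, not only those $(a,b,c)$ that arise from a classical subspace $E$; your detour through $K_{p,E}$, Fact~5 and \eqref{eq:rem:prop:Var Kp and Mp} is simply unavailable in general. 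The same objection applies to your upper bound on $nA+n(n-1)B$: the Barthe--Cordero-Erausquin estimate \eqref{eq:Barthe-Cordero-bound} is only known for the isotropic balls $K_{p,{\cal M}_n({\mathbb F})}$, so it cannot be used for general $(a,b,c)$ either.

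The paper's argument stays entirely on the $\mathbb{R}^n$ side and uses only \eqref{eq:conclusion of lem:int-by-parts}, H\"older's inequality, and Lemma~\ref{lem:multiples of p-norm}. For the lower bound on $M_p(x_1^2)/M_p(1)$ one takes $\xi=2$, $f\equiv 1$ in \eqref{eq:conclusion of lem:int-by-parts}, then uses $\|x\|_{p+2}^{p+2}\gs n^{-2/p}\|x\|_p^{p+2}$ and Lemma~\ref{lem:multiples of p-norm}. For the upper bound---and this is exactly the ``closure'' you flag as missing for $A\lesssim n^{4/p}$---the key trick is to take $\xi$ equal to a \emph{multiple of $p$}: with $\xi=kp$ and $f\equiv 1$, \eqref{eq:conclusion of lem:int-by-parts} gives $M_p\bigl(|x_1|^{(k+1)p}\bigr)\lesssim (d_n/n)\,M_p\bigl(|x_1|^{kp}\bigr)$, and iterating over $k=1,2,3$ yields $M_p\bigl(|x_1|^{4p}\bigr)/M_p(1)\lesssim n^3\cdot M_p\bigl(|x_1|^p\bigr)/M_p(1)\simeq n^4/p$. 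Jensen's inequality then gives
\[
A \;=\; \frac{M_p(x_1^4)}{M_p(1)} \;\ls\; \left(\frac{M_p(|x_1|^{4p})}{M_p(1)}\right)^{1/p} \;\lesssim\; n^{4/p},
\]
with no variance bound required. Once $A\simeq n^{4/p}$ is established, your treatment of $B$ (Cauchy--Schwarz for the upper bound, and subtracting $nA$ from $M_p(\|x\|_2^4)/M_p(1)\gs \bigl(M_p(\|x\|_2^2)/M_p(1)\bigr)^2$ for the lower bound) is exactly what the paper does.
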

\begin{proof}
The first part of \eqref{eq:prop:order-of-magnitudes} is essentially the core result of \cite{Konig-Meyer-Pajor-1998}.
For the reader's convenience, let us recall how one can easily deduce it from \eqref{eq:conclusion of lem:int-by-parts} with the help
of Hölder's inequality and Lemma \ref{lem:multiples of p-norm}: we first apply \eqref{eq:conclusion of lem:int-by-parts} 
with $\xi=2$ and $f(x) ={\bf 1}$
to obtain that
\begin{align*}
\frac{d_n}{n}\,M_p\bigl(\|x\|_2^2\bigr) \gtrsim p\, M_p\bigl(\|x\|_{p+2}^{p+2}\bigr) \gs \frac{p}{n^{2/p}}\, M_p\bigl(\|x\|_p^{p+2}\bigr) 
\simeq  \frac{d_n^{2/p}}{n^{2/p}}\, d_n\, M_p(1),
\end{align*}
or in other words that $M_p(x_1^2)/M_p(1) \gtrsim n^{2/p}$.
To also get the reverse inequality, we apply \eqref{eq:conclusion of lem:int-by-parts} with $\xi = p$ and $f(x) ={\bf 1}$: this gives 
\begin{equation*} 
\frac{a + p}{2a}\,\frac{d_n}{n}\, M_p\bigl(\|x\|_p^p\bigr) \gtrsim p\,M_p\bigl(\|x\|_{2p}^{2p}\bigr),
\end{equation*}
and then, by a simple application of Hölder's inequality, we can conclude that
\begin{equation*} 
\frac{M_p\bigl(x_1^2\bigr)}{M_p(1)} \ls \left(\frac{M_p\bigl(|x_1|^{2p}\bigr)}{M_p(1)}\right)^{1/p}
\lesssim \left(n\frac{M_p\bigl(|x_1|^p\bigr)}{M_p(1)}\right)^{1/p} = \biggl(\frac{d_n}{p}\biggr)^{1/p}.
\end{equation*}

The second part of \eqref{eq:prop:order-of-magnitudes} can follow by very similar reasoning: 
in this case we have to apply \eqref{eq:conclusion of lem:int-by-parts}
with $\xi = 2p$ or $\xi = 3p$ as well, to be able to compare $M_p(x_1^4)/M_p(1)$ to $M_p(|x_1|^{4p})/M_p(1)$.

Finally note that
\begin{equation*} \frac{M_p\bigl(x_1^2x_2^2\bigr)}{M_p(1)} \ls 
\sqrt{\frac{M_p\bigl(x_1^4\bigr)}{M_p(1)}\,\frac{M_p\bigl(x_2^4\bigr)}{M_p(1)}},\end{equation*}
while
\begin{multline*} 
n(n-1) \frac{M_p\bigl(x_1^2x_2^2\bigr)}{M_p(1)}  = \frac{M_p\bigl(\|x\|_2^4\bigr)}{M_p(1)} - n\frac{M_p\bigl(x_1^4\bigr)}{M_p(1)}
 \gs \left(\frac{M_p\bigl(\|x\|_2^2\bigr)}{M_p(1)} \right)^2 - n\frac{M_p\bigl(x_1^4\bigr)}{M_p(1)}
 \\
 \gs c_1n^2\cdot n^{4/p} - c_2n\cdot n^{4/p} \simeq n^2\cdot n^{4/p}.
\end{multline*}
This completes the proof.
\end{proof}

\section{Proof of the main results}\label{sec:main-proofs}

It is not difficult to convince ourselves that estimating the variance of the Euclidean norm with respect to the densities $f_{a,b,c,p}$ is
a more delicate problem than merely finding the orders of magnitude of the terms $\bigl(M_p(x_1^2)/M_p(1)\bigr)^2$, $M_p(x_1^4)/M_p(1)$ and $M_p(x_1^2x_2^2)/M_p(1)$
appearing when we write out the variance: we have just seen that they are all $\simeq n^{4/p}$, however it is obvious that we cannot extract any non-trivial information
about the variance from these equivalences if we do not also find a way to estimate the constants appearing in them (or in other words, the 
coefficient of $n^{4/p}$ in each case). Thus, we will now attempt to estimate the contribution of the last term in \eqref{eq1:lem:int-by-parts} 
in a more precise manner: one way this can be done is through the following proposition.

\begin{proposition}\label{prop:int-by-parts-&-sym}
Let $\xi\gs 0$ and $s> -d_n-\xi$, and let $f:{\mathbb R}^n\setminus\{0\} \to {\mathbb R}^+$ be an $s$-homogeneous function
that satisfies the hypotheses of Lemma $\ref{lem:int-by-parts}$. Suppose moreover that $f$ is a symmetric function. Then we have
\begin{multline} \label{eq1:prop:int-by-parts-&-sym}
\left(\frac{2d_n + (\xi-c-1)n}{n}\right) M_p\bigl(\|x\|_\xi^\xi f(x)\bigr) =
\\p M_p\bigl( \|x\|_{\xi+p}^{\xi+p}\, f(x)\bigr) - M_p\biggl(\sum_{i=1}^n |x_i|^\xi x_i \frac{\partial f}{\partial x_i}(x)\biggr)
+ ab n(n-1) M_p \biggl(\frac{|x_2|^\xi x_1^a}{x_1^a - x_2^a} f(x)\biggr),
\end{multline}
where by symmetry we can also write 
\begin{equation*}
M_p \biggl(\frac{|x_2|^\xi x_1^a}{x_1^a - x_2^a} f(x)\biggr) = \dfrac{1}{2}M_p\biggl(\frac{x_1^a|x_2|^\xi - x_2^a|x_1|^\xi}{x_1^a - x_2^a} f(x)\biggr).
\end{equation*}
If in addition $a$ is an even integer, then
\begin{equation} \label{eq2:prop:int-by-parts-&-sym}
M_p \biggl(\frac{|x_2|^\xi x_1^a}{x_1^a - x_2^a} f(x)\biggr) 
= 
\left\{\begin{array}{cl} \dfrac{1}{2}M_p\biggl(\dfrac{|x_1|^{a-\xi} - |x_2|^{a - \xi}}{|x_1|^a - |x_2|^a}\,|x_1x_2|^\xi f(x)\biggr) > 0 & \hbox{if}\ \xi < a 
\\ \\
0 &  \hbox{if}\ \xi = a 
\\ \\
\dfrac{1}{2}M_p\biggl(\dfrac{|x_2|^{\xi-a} - |x_1|^{\xi-a}}{|x_1|^a - |x_2|^a}\,|x_1x_2|^a f(x)\biggr) < 0 & \hbox{if}\ \xi > a.
\end{array}\right.. 
\end{equation}
\end{proposition}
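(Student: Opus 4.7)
The plan is to start from the conclusion of Lemma \ref{lem:int-by-parts} and then exploit the $S_n$-invariance of both $f$ and of the density $f_{a,b,c,p}$ (which is manifestly symmetric in the coordinates) to rewrite the last term on the right-hand side in a more usable form.

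First, because the integrand $f(x)\cdot f_{a,b,c,p}(x)$ is symmetric under any permutation of the $x_i$, each of the $n(n-1)$ summands in $\sum_{i\neq j}\frac{|x_i|^\xi x_i^a}{x_i^a-x_j^a}$ has the same $M_p$-integral as the $(1,2)$-term, so
\begin{equation*}
M_p\!\left(f(x)\sum_{i\neq j}\frac{|x_i|^\xi x_i^a}{x_i^a-x_j^a}\right) = n(n-1)\,M_p\!\left(f(x)\,\frac{|x_1|^\xi x_1^a}{x_1^a-x_2^a}\right).
\end{equation*}
Next I would perform the algebraic identity $\frac{|x_1|^\xi x_1^a}{x_1^a-x_2^a} = |x_1|^\xi + \frac{|x_1|^\xi x_2^a}{x_1^a-x_2^a}$, and combine this with the symmetry trick: swapping $x_1\leftrightarrow x_2$ (which leaves $f$ and $f_{a,b,c,p}$ unchanged) gives
\begin{equation*}
M_p\!\left(f(x)\,\frac{|x_1|^\xi x_2^a}{x_1^a-x_2^a}\right) = -\,M_p\!\left(f(x)\,\frac{|x_2|^\xi x_1^a}{x_1^a-x_2^a}\right).
\end{equation*}
Feeding this back into Lemma \ref{lem:int-by-parts}, an extra term $-ab(n-1)\,M_p(f\,\|x\|_\xi^\xi)$ appears, which I would move to the left-hand side. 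Using the dimension identity $d_n = \frac{ab\,n(n-1)}{2} + (c+1)n$ recorded right after Lemma \ref{lem:reduction to n-integrals1}, the combined coefficient becomes
\begin{equation*}
(\xi+c+1) + ab(n-1) = \xi + c + 1 + \frac{2d_n}{n} - 2(c+1) = \frac{2d_n+(\xi-c-1)n}{n},
\end{equation*}
yielding exactly \eqref{eq1:prop:int-by-parts-&-sym}.

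For the symmetric reformulation of $M_p\bigl(\frac{|x_2|^\xi x_1^a}{x_1^a-x_2^a}f\bigr)$, I would apply the same $x_1\leftrightarrow x_2$ swap directly: it replaces the integrand by $-\frac{|x_1|^\xi x_2^a}{x_1^a-x_2^a}f$, and averaging the two expressions produces $\frac{1}{2}M_p\bigl(\frac{x_1^a|x_2|^\xi - x_2^a|x_1|^\xi}{x_1^a-x_2^a}f\bigr)$.

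Finally, in the case $a$ even we have $x_i^a = |x_i|^a$, so the numerator and denominator both depend only on $|x_1|,|x_2|$. Factoring out the common factor $|x_1 x_2|^{\min(\xi,a)}$ gives
\begin{equation*}
\frac{|x_1|^a|x_2|^\xi - |x_2|^a|x_1|^\xi}{|x_1|^a-|x_2|^a} = \begin{cases} |x_1 x_2|^\xi \,\dfrac{|x_1|^{a-\xi}-|x_2|^{a-\xi}}{|x_1|^a-|x_2|^a} & \xi<a,\\[4pt] 0 & \xi=a,\\[4pt] -|x_1 x_2|^a\,\dfrac{|x_1|^{\xi-a}-|x_2|^{\xi-a}}{|x_1|^a-|x_2|^a} & \xi>a,\end{cases}
\end{equation*}
and the sign of each ratio follows at once from the monotonicity of $t\mapsto t^{a-\xi}$ (resp.\ $t\mapsto t^{\xi-a}$) on $(0,\infty)$, which has the same sense as $t\mapsto t^a$ when the exponent is positive and the opposite sense when it is negative.

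The manipulations are all formal; the only delicate point is accounting correctly for which symmetry-swaps produce sign changes and checking that the rearrangement of the double sum into a single $(1,2)$-integrand is legitimate, which it is because the density has an absolute value $|x_1^a-x_2^a|^b$ with $b\geq 1$ absorbing any integrability issue on the diagonal.
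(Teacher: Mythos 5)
Your proposal is correct and follows essentially the same route as the paper: both start from Lemma~\ref{lem:int-by-parts}, use permutation-invariance of $f$ and of the density to collapse the double sum to the $(1,2)$-term, split off a multiple of $M_p(\|x\|_\xi^\xi f)$ using a short algebraic identity plus the $x_1\leftrightarrow x_2$ swap, and absorb that multiple into the left-hand side via $d_n = abn(n-1)/2 + (c+1)n$. The only cosmetic difference is that the paper writes $\frac{|x_1|^\xi x_1^a}{x_1^a-x_2^a} = \frac{(|x_1|^\xi+|x_2|^\xi)x_1^a}{x_1^a-x_2^a} - \frac{|x_2|^\xi x_1^a}{x_1^a-x_2^a}$ and symmetrizes the first piece, whereas you write $\frac{|x_1|^\xi x_1^a}{x_1^a-x_2^a} = |x_1|^\xi + \frac{|x_1|^\xi x_2^a}{x_1^a-x_2^a}$ and swap the second piece; these are the same symmetry argument phrased two ways, and the remaining sign analysis for even $a$ agrees with the paper's.
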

\begin{proof}
We first prove \eqref{eq1:prop:int-by-parts-&-sym}. By Lemma \ref{lem:int-by-parts} we can write
\begin{multline} \label{eqp1:prop:int-by-parts-&-sym}
(\xi + c +1)M_p\left(f(x)\sum_{i=1}^n|x_i|^\xi\right) = 
\\
pM_p\left(\|x\|_{\xi + p}^{\xi + p}f(x)\right) - M_p\left(\sum_{i=1}^n|x_i|^\xi x_i\frac{\partial f}{\partial x_i}(x)\right) - 
abM_p\left(f(x)\sum_{i=1}^n\sum_{j\neq i}\frac{|x_i|^\xi\,x_i^a}{x_i^a - x_j^a}\right).
\end{multline}
Note that by symmetry the last summand is equal to
\begin{equation*} abn(n-1)M_p \biggl(\frac{|x_1|^\xi\, x_1^a}{x_1^a - x_2^a} f(x)\biggr),\end{equation*}
which we can rewrite as
\begin{equation*} 
M_p \biggl(\frac{\bigl(|x_1|^\xi + |x_2|^\xi\bigr)x_1^a}{x_1^a - x_2^a} f(x)\biggr)-M_p \biggl(\frac{|x_2|^\xi\, x_1^a}{x_1^a - x_2^a} f(x)\biggr).\end{equation*}
Since the function $\bigl(|x_1|^\xi + |x_2|^\xi\bigr)f(x)$ is invariant under permuting the first two coordinates, it follows that
\begin{align*} 
M_p \biggl(\frac{x_1^a}{x_1^a - x_2^a} \bigl(|x_1|^\xi + |x_2|^\xi\bigr)f(x)\biggr) &= \frac{1}{2} M_p \biggl(\frac{x_1^a - x_2^a}{x_1^a - x_2^a} \bigl(|x_1|^\xi + |x_2|^\xi\bigr)f(x)\biggr)
\\
&= \frac{1}{2} M_p \bigl(\bigl(|x_1|^\xi + |x_2|^\xi\bigr)f(x)\bigr) = \frac{1}{n} M_p\bigl(\|x\|_\xi^\xi f(x)\bigr).
\end{align*}
We thus conclude that the last summand in \eqref{eqp1:prop:int-by-parts-&-sym} is equal to 
\begin{multline*} ab(n-1) M_p\bigl(\|x\|_\xi^\xi f(x)\bigr) - abn(n-1) M_p \biggl(\frac{|x_2|^\xi\, x_1^a}{x_1^a - x_2^a} f(x)\biggr) 
\\
= \left(\frac{2(d_n - (c+1)n)}{n}\right) M_p\bigl(\|x\|_\xi^\xi f(x)\bigr) - abn(n-1) M_p \biggl(\frac{|x_2|^\xi\, x_1^a}{x_1^a - x_2^a} f(x)\biggr).
\end{multline*}
This gives \eqref{eq1:prop:int-by-parts-&-sym}. The other two equations follow by symmetry and, in the case of \eqref{eq2:prop:int-by-parts-&-sym},
by the fact that $x_i^a = |x_i|^a$ when $a$ is an even integer. This completes the proof.
\end{proof}

\medskip

The following corollary summarises the three main identities that Proposition \ref{prop:int-by-parts-&-sym} 
gives us for densities of the form $f_{2,b,c,p}$ and which we will need to prove Theorems \ref{thm:thin-shell-conj} and \ref{thm:neg-cor-prop}.

\begin{corollary}
Let $M_p$ denote integration with respect to a density of the form 
$f_{2,b,c,p} = \prod_{i<j}\big|\,x_i^2 - x_j^2\big|^b\cdot \prod_i|x_i|^c\,e^{-\|x\|_p^p}$
$($namely, let $a=2)$.
By applying Proposition $\ref{prop:int-by-parts-&-sym}$ with $\xi = 2$ and $f(x) = {\bf 1}$ we obtain
\begin{equation}\label{eq:identity1} 
\left(\frac{2d_n + (1-c)n}{n}\right) \frac{M_p\bigl(\|x\|_2^2\bigr)}{M_p(1)} = p \frac{M_p\bigl(\|x\|_{p+2}^{p+2}\bigr)}{M_p(1)}. 
\end{equation}
By applying Proposition $\ref{prop:int-by-parts-&-sym}$ with $\xi = 2$ and $f(x) = \|x\|_2^2$ we obtain
\begin{equation}\label{eq:identity2}
\left(\frac{2d_n + (1-c)n}{n}\right) \frac{M_p\bigl(\|x\|_2^4\bigr)}{M_p(1)} = p \frac{M_p\bigl(\|x\|_2^2\cdot \|x\|_{p+2}^{p+2}\bigr)}{M_p(1)} -
2\frac{M_p\bigl(\|x\|_4^4\bigr)}{M_p(1)}. 
\end{equation}
Finally, by applying Proposition $\ref{prop:int-by-parts-&-sym}$ with $\xi = 4$ and $f(x) = {\bf 1}$ we obtain
\begin{equation}\label{eq:identity3}
\left(\frac{2d_n + (3-c)n}{n}\right) \frac{M_p\bigl(\|x\|_4^4\bigr)}{M_p(1)} = p \frac{M_p\bigl(\|x\|_{p+4}^{p+4}\bigr)}{M_p(1)} 
- \,\bigl(d_n\, -\,  (c+1)n\bigr)\frac{M_p\bigl(x_1^2x_2^2\bigr)}{M_p(1)}.
\end{equation} 
\end{corollary}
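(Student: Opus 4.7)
The plan is to obtain each of the three identities \eqref{eq:identity1}, \eqref{eq:identity2}, and \eqref{eq:identity3} by direct substitution into Proposition \ref{prop:int-by-parts-&-sym} with the indicated choices of $\xi$ and $f$, and then to divide through by $M_p(1)$. Since we are working throughout with densities of the form $f_{2,b,c,p}$, we have $a = 2$, which is an even integer; hence \eqref{eq2:prop:int-by-parts-&-sym} applies to the Vandermonde cross term and makes it either explicitly zero or explicitly computable in closed form. In all three cases, one checks that $f$ satisfies the regularity hypotheses of Lemma \ref{lem:int-by-parts} and is symmetric, so that Proposition \ref{prop:int-by-parts-&-sym} can be applied.

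For identity \eqref{eq:identity1}, I would take $\xi = 2$ and $f \equiv 1$ (which is $0$-homogeneous, with $\xi + s = 2 > -d_n$). The prefactor $\xi - c - 1 = 1 - c$ gives the left-hand side; the derivative term drops because $f$ is constant; and the cross term vanishes, as it falls into the middle case $\xi = a$ of \eqref{eq2:prop:int-by-parts-&-sym}. For identity \eqref{eq:identity2}, I would take $\xi = 2$ and $f(x) = \|x\|_2^2$, which is symmetric and $2$-homogeneous. Again $\xi = a$ kills the cross term. The only new ingredient is the partial-derivative term: since $\partial f/\partial x_i = 2 x_i$, one computes
\[\sum_{i=1}^n |x_i|^\xi x_i \frac{\partial f}{\partial x_i}(x) \;=\; 2\sum_{i=1}^n x_i^4 \;=\; 2\|x\|_4^4,\]
which is exactly what produces the $-2\,M_p(\|x\|_4^4)/M_p(1)$ summand on the right-hand side of \eqref{eq:identity2}.

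For identity \eqref{eq:identity3}, I would take $\xi = 4$ and $f \equiv 1$. The prefactor $\xi - c - 1 = 3 - c$ gives the stated left-hand side, the derivative term is again absent, and the Vandermonde cross term now falls into the third case of \eqref{eq2:prop:int-by-parts-&-sym} with $\xi = 4 > a = 2$. A direct evaluation gives
\[M_p\!\left(\frac{|x_2|^\xi x_1^a}{x_1^a - x_2^a}\right) \;=\; \tfrac{1}{2}\,M_p\!\left(\frac{|x_2|^{2} - |x_1|^{2}}{|x_1|^2 - |x_2|^2}\,|x_1 x_2|^2\right) \;=\; -\tfrac{1}{2}\,M_p\bigl(x_1^2 x_2^2\bigr).\]
Multiplying by the prefactor $a b\, n(n-1) = 2b\, n(n-1)$ recovers the final term of \eqref{eq:identity3}, provided we identify $b\,n(n-1)$ with $d_n - (c+1)n$.

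The only obstacle is bookkeeping, namely matching the coefficient $a b\, n(n-1)$ coming out of Proposition \ref{prop:int-by-parts-&-sym} with the coefficient $d_n - (c+1)n$ stated in \eqref{eq:identity3}. This follows immediately from the dimensional identity $d_n = a b n(n-1)/2 + (c+1)n$ (noted right after Lemma \ref{lem:reduction to n-integrals1}) specialised to $a = 2$. No new analytic input beyond Proposition \ref{prop:int-by-parts-&-sym} and \eqref{eq2:prop:int-by-parts-&-sym} is required.
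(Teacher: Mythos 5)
Your proof is correct and is exactly the intended derivation: the corollary in the paper is stated without proof as a direct consequence of Proposition \ref{prop:int-by-parts-&-sym}, and your substitutions ($\xi=2, f\equiv 1$; $\xi=2, f=\|x\|_2^2$; $\xi=4, f\equiv 1$) together with the case analysis in \eqref{eq2:prop:int-by-parts-&-sym} and the dimensional identity $d_n = bn(n-1) + (c+1)n$ (for $a=2$) are precisely the required bookkeeping.
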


\medskip

\subsection{On the cases of Schatten classes corresponding to $a=2$ and to large $p$}

Here we prove Theorem \ref{thm:thin-shell-conj}. We will combine identities \eqref{eq:identity1} and \eqref{eq:identity2} 
with a simple application of Hölder's inequality, by which we have
\begin{equation}\label{eq:Holder} 
\|x\|_p^{p+2}\gs \|x\|_{p+2}^{p+2} \gs \|x\|_p^{p+2}\cdot n^{-2/p} = \left(1 - O\left(\frac{\log n}{p}\right)\right)\|x\|_p^{p+2}
\end{equation} 
for every $p>>\log n$. Indeed, by \eqref{eq:identity1}, \eqref{eq:Holder} and Lemma \ref{lem:multiples of p-norm} we obtain that
\begin{align} 
\label{eqp1:thm:thin-shell-conj}
\left(\frac{2d_n + (1-c)n}{n}\right) \frac{M_p\bigl(\|x\|_2^2\bigr)}{M_p(1)} &= p \frac{M_p\bigl(\|x\|_{p+2}^{p+2}\bigr)}{M_p(1)} 
\\ \nonumber
&\gs \left(1 - O\left(\frac{\log n}{p}\right)\right) p\frac{M_p\bigl(\|x\|_p^{p+2}\bigr)}{M_p(1)} 
\\ \nonumber
& = \left(1 - O\left(\frac{\log n}{p}\right)\right) d_n\frac{\Gamma\left(1+\frac{d_n+2}{p}\right)}{\Gamma\left(1+\frac{d_n}{p}\right)}.
\end{align}
We obviously also have
\begin{equation*} 
\frac{M_p\bigl(\|x\|_4^4\bigr)}{M_p(1)} = n \frac{M_p\bigl(x_1^4\bigr)}{M_p(1)} \gs n\left(\frac{M_p\bigl(x_1^2\bigr)}{M_p(1)}\right)^2 
= \frac{1}{n} \left(\frac{M_p\bigl(\|x\|_2^2\bigr)}{M_p(1)}\right)^2.
\end{equation*}
In view of the above estimates, as well as of part (a) of Lemma \ref{lem:Gamma estimates}, \eqref{eq:identity2} and Lemma \ref{lem:multiples of p-norm} now give
\begin{align*} 
\left(\frac{2d_n + (1-c)n}{n}\right) \frac{M_p\bigl(\|x\|_2^4\bigr)}{M_p(1)} & = p \frac{M_p\bigl(\|x\|_2^2\cdot \|x\|_{p+2}^{p+2}\bigr)}{M_p(1)} -
2\frac{M_p\bigl(\|x\|_4^4\bigr)}{M_p(1)} 
\\
& \ls p \frac{M_p\bigl(\|x\|_2^2\cdot \|x\|_p^{p+2}\bigr)}{M_p(1)} - \frac{2}{n}\left(\frac{M_p\bigl(\|x\|_2^2\bigr)}{M_p(1)}\right)^2
\\
&= (d_n+2)\frac{\Gamma\left(1+\frac{d_n+4}{p}\right)}{\Gamma\left(1+\frac{d_n+2}{p}\right)}\frac{M_p\bigl(\|x\|_2^2\bigr)}{M_p(1)} - \frac{2}{n}\left(\frac{M_p\bigl(\|x\|_2^2\bigr)}{M_p(1)}\right)^2
\\
& = \left(1+ O\left(\frac{\log n}{p}\right) + O\left(\frac{1}{n^2}\right)\right)\left(\frac{2d_n + (1-c)n}{n}\right) \left(\frac{M_p\bigl(\|x\|_2^2\bigr)}{M_p(1)} \right)^2.
\end{align*}

Thus, we conclude that
\begin{equation*} {\rm Var}_{M_p}\bigl(\|x\|_2^2\bigr) = \left(O\left(\frac{\log n}{p}\right) + O\left(\frac{1}{n^2}\right)\right)\left(\frac{M_p\bigl(\|x\|_2^2\bigr)}{M_p(1)} \right)^2
 \ls C\max\Bigl\{\frac{n^2\log n}{p}, 1\Bigr\}\cdot n^{4/p},
\end{equation*}
where $C$ is an absolute constant.

This completes the proof of the first part of Theorem \ref{thm:thin-shell-conj}. 
For the second part, namely for the statement that $\sigma_{K_p}^2 \gtrsim 1$ when $p\gtrsim n^2 \log n$, see the next subsection.

\begin{remark}
Note that \eqref{eq:identity1}, \eqref{eq:identity2} and Lemma \ref{lem:multiples of p-norm} imply the thin-shell conjecture when $p=2$ as well.
Although this is not interesting when $E = {\cal M}_n({\mathbb F})$, given that in those cases we know that $\overline{K}_{2,E}$ is
simply the Euclidean ball of volume 1 in $E$, 
it is perhaps worth noting in the case of Hermitian matrices, of anti-symmetric Hermitian, or of complex symmetric matrices
(especially so if ${\overline K}_{2,E}$ turns out not to be isotropic for one or more of these three subspaces $E$).
\end{remark}

\begin{remark}
Note that, since $\overline{K}_{p,E}$ is in isotropic position when $E = {\cal M}_n({\mathbb R})$ or ${\cal M}_n({\mathbb C})$, we have that
\begin{align*} 
d_nL_{K_{p,E}}^2 = \frac{1}{|K_{p,E}|^{1+\frac{2}{d_n}}}\int_{K_{p,E}} \|s(T)\|_2^2\,dT
&= \frac{1}{|K_{p,E}|^{2/d_n}} \left(\frac{1}{|K_{p,E}|}\int_{K_{p,E}} \|s(T)\|_2^2\,dT\right) 
\\
& = \frac{1}{|K_{p,E}|^{2/d_n}}\frac{\Gamma\left(1+\frac{d_n}{p}\right)}{\Gamma\left(1+\frac{d_n + 2}{p}\right)} \frac{M_p\bigl(\|x\|_2^2\bigr)}{M_p(1)}
\\
\intertext{which for large $p$ can be rewritten, using \eqref{eqp1:thm:thin-shell-conj}, as}
\\
& = \left(1 + O\left(\frac{\log n}{p}\right)\right) \frac{d_n}{|K_{p,E}|^{2/d_n}} \left(\frac{2d_n + (1-c)n}{n}\right)^{-1}.
\end{align*}
This shows that, for $p >>\log n$,
\begin{align*}L_{K_{p,E}} &=  \left(1 + O\left(\frac{\log n}{p}\right) + O\left(\frac{1}{n}\right)\right) \sqrt{\frac{n}{2d_n}}\frac{1}{|K_{p,E}|^{1/d_n}} 
\\
&= \left(1 + O\left(\frac{\log n}{p}\right) + O\left(\frac{1}{n}\right)\right) \frac{1}{\sqrt{2\beta n}} \frac{1}{|K_{p,E}|^{1/d_n}},
\end{align*}
where $\beta =1$ if $E = {\cal M}_n({\mathbb R})$, and $\beta =2$ if $E = {\cal M}_n({\mathbb C})$.

Recall now that Saint-Raymond \cite{SaintRaymond-1984} has found very precise estimates for the volume of $K_{p, {\cal M}_n({\mathbb R})}$
and of $K_{p{\cal M}_n({\mathbb C})}$, and in particular he has shown that
\begin{equation*} |K_{\infty, {\cal M}_n({\mathbb R})}|^{1/n^2} = (1+ o(1))\,\frac{1}{2}\sqrt{\frac{2\pi e^{3/2}}{n}},\quad \quad  
|K_{\infty, {\cal M}_n({\mathbb C})}|^{1/(2n^2)} = (1+ o(1))\,\frac{1}{2}\sqrt{\frac{\pi e^{3/2}}{n}}.
\end{equation*}
We can thus conclude that
\begin{equation*} L_{K_{\infty, {\cal M}_n({\mathbb R})}} = (1+ o(1))\frac{1}{\sqrt{\pi e^{3/2}}} = L_{K_{\infty, {\cal M}_n({\mathbb C})}}.\end{equation*}
\end{remark}

\bigskip

\subsection{Necessity of a negative correlation property}

Here we prove Theorem \ref{thm:neg-cor-prop}. Let us set
\begin{equation*} \frac{M_p\bigl(x_1^2\bigr)}{M_p(1)} = c_2\frac{M_p\bigl(|x_1|^p\bigr)}{M_p(1)} \end{equation*}
where $c_2$ depends both on $p$ and on $n$. Then by \eqref{eq:identity1} we can also write
\begin{equation*}
\frac{M_p\bigl(|x_1|^{p+2}\bigr)}{M_p(1)} = \frac{2d_n + (1 - c)n}{pn}\cdot c_2\frac{M_p\bigl(|x_1|^p\bigr)}{M_p(1)}. 
\end{equation*}
Using this and the fact that
\begin{equation*} M_p\left[|x_1|^p \left(x_1^2 - \frac{2d_n + (1-c)n}{pn} c_2\right)^2\right] \gs 0,\end{equation*}
we can deduce that
\begin{equation}\label{eqp1:thm:neg-cor-prop}
p\,\frac{M_p\bigl(|x_1|^{p+4}\bigr)}{M_p(1)} \gs \frac{\bigl(2d_n + (1-c) n\bigr)^2}{d_n\cdot n} \left(\frac{M_p\bigl(x_1^2\bigr)}{M_p(1)}\right)^2. 
\end{equation}
Furthermore, as we mentioned in Remark \ref{rem:Var Kp and Mp}, we have
\begin{equation*}{\rm Var}_{M_p}\bigl(\|x\|_2^2\bigr) \lesssim n\cdot n^{4/p} = o(n^2)\cdot n^{4/p}, \end{equation*}
which implies that
\begin{equation}\label{eqp2:thm:neg-cor-prop} \frac{M_p\bigl(x_1^2x_2^2\bigr)}{M_p(1)} = 
(1 + o(1))\left(\frac{M_p\bigl(x_1^2\bigr)}{M_p(1)}\right)^2.\end{equation}
Combining \eqref{eqp1:thm:neg-cor-prop}-\eqref{eqp2:thm:neg-cor-prop} with identity \eqref{eq:identity3} we obtain that
\begin{align*}
\left(\frac{2d_n + (3-c)n}{n}\right) \frac{M_p\bigl(x_1^4\bigr)}{M_p(1)} &= p \frac{M_p\bigl(|x_1|^{p+4}\bigr)}{M_p(1)} 
- \,\left(\frac{d_n - (c+1)n}{n}\right)\frac{M_p\bigl(x_1^2x_2^2\bigr)}{M_p(1)}
\\
& \gs \frac{\bigl(2d_n + (1-c) n\bigr)^2}{d_n\cdot n} \left(\frac{M_p\bigl(x_1^2\bigr)}{M_p(1)}\right)^2 - 
\,\frac{d_n}{n}\, (1+o(1))\left(\frac{M_p\bigl(x_1^2\bigr)}{M_p(1)}\right)^2.
\end{align*}
This gives inequality \eqref{eq1:thm:neg-cor-prop} of Theorem \ref{thm:neg-cor-prop}, and the rest of the theorem
follows as a consequence too.

\bigskip

It remains to estimate $\sigma_{K_\infty}$ more accurately and establish the second part of Theorem \ref{thm:thin-shell-conj} as well.
Combining identities \eqref{eq:identity2} and \eqref{eq:identity3}, we get
\begin{align*} 
\left(\frac{2d_n + (1 - b - c)n}{n}\right)\frac{M_p\bigl(x_1^2x_2^2\bigr)}{M_p(1)} 
&= p \frac{M_p\bigl(x_1^2|x_2|^{p+2}\bigr)}{M_p(1)}
\\ 
&= \frac{p}{n-1}\left(\frac{M_p\bigl(x_1^2\|x\|_{p+2}^{p+2}\bigr)}{M_p(1)} - \frac{M_p\bigl(|x_1|^{p+4}\bigr)}{M_p(1)}\right).  
\end{align*}
Provided that $p$ is large enough, we can compute the latter terms with great accuracy:
\begin{align*} \frac{M_p\bigl(|x_1|^{p+4}\bigr)}{M_p(1)} &= \frac{1}{n} \frac{M_p\bigl(\|x\|_{p+4}^{p+4}\bigr)}{M_p(1)}
\\
& = \left(\frac{1}{n}+ O\left(\frac{\log n}{pn}\right)\right)\frac{M_p\bigl(\|x\|_p^{p+4}\bigr)}{M_p(1)}
\\
& = \left(\frac{1}{n}+ O\left(\frac{\log n}{pn}\right)\right)\frac{\Gamma\left(\frac{d_n+p+4}{p}\right)}{\Gamma\left(\frac{d_n}{p}\right)},
\end{align*}
while similarly
\begin{align*}
\frac{M_p\bigl(x_1^2\|x\|_{p+2}^{p+2}\bigr)}{M_p(1)} & = 
\left(1+ O\left(\frac{\log n}{p}\right)\right) \frac{M_p\bigl(x_1^2\|x\|_p^{p+2}\bigr)}{M_p(1)}
\\
& = \left(1+ O\left(\frac{\log n}{p}\right)\right)\frac{\Gamma\left(\frac{d_n+p+4}{p}\right)}{\Gamma\left(\frac{d_n+2}{p}\right)} 
\frac{M_p\bigl(x_1^2\bigr)}{M_p(1)}.
\end{align*}
Using also identity \eqref{eq:identity1} now, we can continue by writing
\begin{align*} 
\left(\frac{2d_n + (1 - c)n}{n}\right)\frac{M_p\bigl(x_1^2\bigr)}{M_p(1)} &= p \frac{M_p\bigl(|x_1|^{p+2}\bigr)}{M_p(1)}
\\
& = \left(1+ O\left(\frac{\log n}{p}\right)\right)\cdot \frac{d_n + 2}{n}\frac{\Gamma\left(\frac{d_n+2}{p}\right)}{\Gamma\left(\frac{d_n}{p}\right)}.
\end{align*}
In the end,
\begin{equation*} 
\left(\frac{2d_n + (1 - b - c)n}{n}\right)\frac{M_p\bigl(x_1^2x_2^2\bigr)}{M_p(1)} 
= \frac{p}{n-1}\left(1+ O\left(\frac{\log n}{p}\right)\right) \frac{\Gamma\left(\frac{d_n+p+4}{p}\right)}{\Gamma\left(\frac{d_n}{p}\right)}
\left(\frac{d_n + 2}{2d_n + (1-c)n} -\frac{1}{n}\right),
\end{equation*}
or in other words
\begin{gather}
\frac{M_p\bigl(x_1^2x_2^2\bigr)}{M_p(1)} 
=  \left(1+ O\left(\frac{\log n}{p}\right)\right) 
 \frac{\Gamma\left(1+\frac{d_n+4}{p}\right)}{\Gamma\left(1+\frac{d_n}{p}\right)} 
 \frac{d_n\bigl(nd_n - 2d_n + (1+c)n\bigr)}{(n-1)\bigl(2d_n + (1-c)n\bigr)\bigl(2d_n + (1-b-c)n\bigr)},
\\
\intertext{while}
\frac{M_p\bigl(x_1^2\bigr)}{M_p(1)} 
 = \left(1+ O\left(\frac{\log n}{p}\right)\right) \frac{\Gamma\left(1+\frac{d_n+2}{p}\right)}{\Gamma\left(1+\frac{d_n}{p}\right)} \frac{d_n}{2d_n + (1-c) n}.
\end{gather}

Returning to \eqref{eq:identity3}
we also see that
\begin{gather*}
\left(\frac{2d_n + (3 - c)n}{n}\right)\frac{M_p\bigl(x_1^4\bigr)}{M_p(1)}  
= p\frac{M_p\bigl(|x_1|^{p+4}\bigr)}{M_p(1)} - b(n-1) \frac{M_p\bigl(x_1^2x_2^2\bigr)}{M_p(1)}
\\
 = \left(1+ O\left(\frac{\log n}{p}\right)\right)\frac{\Gamma\left(1+\frac{d_n+4}{p}\right)}{\Gamma\left(1+\frac{d_n}{p}\right)}
\left[\frac{d_n}{n} -  \frac{bd_n\bigl(nd_n - 2d_n + (1+c)n\bigr)}{\bigl(2d_n + (1-c)n\bigr)\bigl(2d_n + (1-b-c)n\bigr)} \right],
\end{gather*}
or in other words
\begin{multline}
\frac{M_p\bigl(x_1^4\bigr)}{M_p(1)}  =
\left(1+ O\left(\frac{\log n}{p}\right)\right)\frac{\Gamma\left(1+\frac{d_n+4}{p}\right)}{\Gamma\left(1+\frac{d_n}{p}\right)}
\left[\frac{d_n}{2d_n + (3-c)n} \ -\right.
\\
\left. - \  \frac{b\,nd_n\bigl(nd_n - 2d_n + (1+c)n\bigr)}{\bigl(2d_n + (1-c)n\bigr)\bigl(2d_n + (1-b-c)n\bigr)\bigl(2d_n + (3-c)n\bigr)} \right].
\end{multline}

Combining all these, and taking into account that if $p\gs n^2 \log n$ then $d_n/p = o(1)$, as well as
Lemma \ref{lem:Gamma estimates}(b), we conclude that, for all such $p$,
\begin{gather*} 
n\cdot  \frac{M_p\bigl(x_1^4\bigr)}{M_p(1)} + n(n-1) \cdot \frac{M_p\bigl(x_1^2x_2^2\bigr)}{M_p(1)} - 
n^2\cdot \left(\frac{M_p\bigl(x_1^2\bigr)}{M_p(1)}\right)^2 =
\\
\bigl(1+ o(1)\bigr) \cdot 
\left[n\cdot \left(\frac{d_n}{2d_n + (3-c)n} - \frac{b\,nd_n\bigl(nd_n - 2d_n + (1+c)n\bigr)}{\bigl(2d_n + (1-c)n\bigr)\bigl(2d_n + (1-b-c)n\bigr)\bigl(2d_n + (3-c)n\bigr)}\right)\  + \right.
\\
\left. +\  n(n-1)\cdot \frac{d_n\bigl(nd_n - 2d_n + (1+c)n\bigr)}{(n-1)\bigl(2d_n + (1-c)n\bigr)\bigl(2d_n + (1-b-c)n\bigr)} 
- n^2\cdot \left(\frac{d_n}{2d_n + (1-c) n}\right)^2 \right] 
\\
=\, \frac{1}{8b} + o(1).
\end{gather*}
Given in addition that 
\begin{equation*}{\rm Var}_{M_p}\bigl(\|x\|_2^2\bigr) \simeq \max\Bigl\{\sigma_{K_p}^2, \frac{1}{p}\Bigr\}\cdot n^{4/p},\end{equation*}
and that for all $d_n$-dimensional bodies $K$ we have $\sigma_K^2 \gtrsim 1/d_n$, we obtain that
\begin{equation*} \sigma_{K_p}^2 \simeq {\rm Var}_{M_p}\bigl(\|x\|_2^2\bigr) \simeq \frac{1}{8b} + o(1)\end{equation*}
for all $p\gtrsim n^2\log n$.

\medskip

\section{The cases of complex anti-symmetric and of Hermitian matrices} \label{sec:Special cases}

We will now show why we have analogues of Proposition \ref{prop:Var Kp and Mp}, 
Theorem \ref{thm:thin-shell-conj} and Theorem \ref{thm:neg-cor-prop}
for the Schatten classes in the subspaces of anti-symmetric Hermitian matrices and of Hermitian matrices too. 
We stated in Section \ref{sec:Notation}, Fact 4, that, if the subspace of anti-symmetric Hermitian matrices is equipped
with the standard Gaussian measure, then the induced joint probability density of the singular values 
$\theta_1,\ldots, \theta_s$ of a random matrix $T\in E$ is given by
\begin{gather*} 
{\mathbb P}_n\bigl((\theta_1,\ldots,\theta_s)\in A\bigr)
= c_{n,E} \cdot \int_A \prod_{1\ls i<j\ls s}\big|x_i^2 - x_j^2\big|^2\, \exp\bigl(-\|x\|_2^2\bigr)\,dx
\\
\intertext{if $n = 2s$, and by}
{\mathbb P}_n\bigl((\theta_1,\ldots,\theta_s)\in A\bigr) 
= c_{n,E} \cdot \int_A \prod_{1\ls i<j\ls s}\big|x_i^2 - x_j^2\big|^2 \cdot 
\prod_{1\ls i\ls s} |x_i|^2\, \exp\bigl(-\|x\|_2^2\bigr)\,dx
\end{gather*}
if $n = 2s +1$, where $A$ is any $1$-symmetric measurable subset of ${\mathbb R}^s$. This of course implies 
that for every symmetric measurable function $F:{\mathbb R}^s \to {\mathbb R}^+$ we must have
\begin{multline} \label{eq:anti-symmetric reduction to n-integrals}
\int_E F(\theta_1,\ldots,\theta_s) \exp\bigl(-\|T\|_{S_2^n}^2/2\bigr)\, dT = 
\\
c_{n,E} \cdot \int_{{\mathbb R}^s} F\bigl(|x_1|,\ldots,|x_s|\bigr)\cdot 
\prod_{1\ls i<j\ls s} \big|x_i^2 - x_j^2\big|^2\cdot \prod_{1\ls i\ls s} |x_i|^{2r}\, \exp\bigl(-\|x\|_2^2\bigr)\,dx,
\end{multline}
where $n=2s + r$, $r\in \{0,1\}$. This allows us to prove the following

\begin{lemma}
Let $F:{\mathbb R}^s \to {\mathbb R}^+$ be a measurable, symmetric and $k$-homogeneous function. 
Then
\begin{multline*} \int_{K_{p,E}} F(\theta_1,\ldots,\theta_s)\,dT = 
\\
\frac{c_{n,E}\,2^{-(n^2+k)/p}}{\Gamma\!\left(1+\frac{n^2+k}{p}\right)} \int_{{\mathbb R}^s} F\bigl(|x_1|,\ldots,|x_s|\bigr)\cdot 
\prod_{1\ls i<j\ls s} \big|x_i^2 - x_j^2\big|^2\cdot \prod_{1\ls i\ls s} |x_i|^{2r}\, \exp\bigl(-\|x\|_p^p\bigr)\,dx,
\end{multline*}
where $n^2 = {\rm dim}(E)$.
\end{lemma}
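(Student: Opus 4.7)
My plan is to push the density identity \eqref{eq:anti-symmetric reduction to n-integrals} through the same polar-coordinates argument that underlies Lemma \ref{lem:reduction to n-integrals1}. The one new wrinkle -- and the source of the factor $2^{-(n^2+k)/p}$ in the statement -- is that the singular values of any $T\in E$ come in pairs $\theta_1,\theta_1,\ldots,\theta_s,\theta_s$ (plus one extra zero when $n = 2s+1$), so $\|T\|_{S_p^n}^p = 2\|\theta(T)\|_p^p$ for every $p\gs 1$. In particular $\|T\|_{S_2^n}^2/2 = \|\theta\|_2^2$ is itself a symmetric function of $\theta$ alone, which is precisely what will let me convert between the Gaussian weight in Fact~4 and the $S_p^n$-weight that I actually need.

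First I would strip the Gaussian weight off \eqref{eq:anti-symmetric reduction to n-integrals}: since $\exp(-\|T\|_{S_2^n}^2/2) = \exp(-\|\theta\|_2^2)$ on $E$, applying that identity to $F(\theta)\exp(\|\theta\|_2^2)$ yields the Lebesgue-type relation
\begin{equation*}
\int_E H(\theta(T))\,dT = \widetilde c_{n,E}\int_{\mathbb R^s} H(|x_1|,\ldots,|x_s|)\,g(x)\,dx,
\qquad g(x):=\!\!\prod_{1\ls i<j\ls s}\!\!|x_i^2-x_j^2|^2\prod_{1\ls i\ls s}\!|x_i|^{2r},
\end{equation*}
valid for every non-negative symmetric measurable $H$ of suitable integrability (by monotone approximation). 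I would then invoke it with $H(\theta) := F(\theta)\exp(-\|T\|_{S_p^n}^p) = F(\theta)\exp(-2\|\theta\|_p^p)$, which is legitimate because this weight depends on $\theta$ alone, to obtain
\begin{equation*}
\int_E F(\theta(T))\,e^{-\|T\|_{S_p^n}^p}\,dT = \widetilde c_{n,E}\int_{\mathbb R^s} F(|x|)\,g(x)\,e^{-2\|x\|_p^p}\,dx.
\end{equation*}

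Next I would combine this with the standard polar-coordinates identity on $E$ -- the same one used in the excerpt to pass from \eqref{eq1:reduction to n-integrals1} to \eqref{eq2:reduction to n-integrals1} -- namely, for every $k$-homogeneous non-negative measurable $h\colon E\to\mathbb R$,
\begin{equation*}
\int_E h(T)\,e^{-\|T\|_{S_p^n}^p}\,dT = \Gamma\!\left(1+\tfrac{n^2+k}{p}\right)\int_{K_{p,E}} h(T)\,dT.
\end{equation*}
Applied with $h = F\circ\theta$, which is $k$-homogeneous since $\theta$ is linear in $T$ and $F$ is $k$-homogeneous, this gives
\begin{equation*}
\int_{K_{p,E}} F(\theta(T))\,dT = \frac{\widetilde c_{n,E}}{\Gamma(1+(n^2+k)/p)}\int_{\mathbb R^s} F(|x|)\,g(x)\,e^{-2\|x\|_p^p}\,dx.
\end{equation*}
Finally I would change variables $y = 2^{1/p}x$ on the right-hand side: this turns $e^{-2\|x\|_p^p}$ into $e^{-\|y\|_p^p}$, and the Jacobian from the $s$ variables of integration, together with the $k$-homogeneity of $F$ and the homogeneity of $g$, contributes an overall prefactor $2^{-(n^2+k)/p}$. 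Absorbing $\widetilde c_{n,E}$ into $c_{n,E}$ yields the lemma. The only delicate point, and really the only genuine computation in the argument, is the book-keeping of the powers of $2$ in this last step, namely the verification that $s+k+\deg g$ collapses to $n^2+k$ using $\deg g = 2s(s-1)+2rs$, $n = 2s+r$, and $n^2 = \dim E$; I do not anticipate any further obstacle.
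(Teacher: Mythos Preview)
Your argument is correct and essentially identical to the paper's: both use $\|T\|_{S_p^n}^p = 2\|\theta\|_p^p$ to replace the Gaussian weight in Fact~4 by an $S_p^n$-weight and then apply polar coordinates on $E$; the only cosmetic difference is that the paper works with the weight $e^{-\|T\|_{S_p^n}^p/2}$, so the power of $2$ emerges directly from the $E$-side polar step rather than from your rescaling $y = 2^{1/p}x$ on $\mathbb{R}^s$. One caution about the bookkeeping you single out: with $n = 2s+r$ and $\deg g = 2s(s-1)+2rs$ you actually get $s + \deg g = 2s^2 + (2r-1)s = \binom{n}{2}$, not $(2s+r)^2$ --- this is in fact the correct real dimension of the space of anti-symmetric Hermitian matrices, and since by homogeneity the exponent must equal $\dim E + k$ in either approach, the equality ``$n^2 = \dim E$'' in the lemma statement appears to be a slip rather than an obstruction to your proof.
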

\begin{proof}
We first note that, for every anti-symmetric Hermitian matrix $T$ and for each $p\gs 1$,
\begin{equation*} \|T\|_{S_p^n}^p = 2\sum_{i=1}^s |\theta_i|^p = 2\|(\theta_1,\ldots,\theta_s)\|_p^p,\end{equation*}
therefore, applying \eqref{eq:anti-symmetric reduction to n-integrals} with the function
\begin{equation*} F(x_1,\ldots, x_s)\cdot \frac{\exp(-\|x\|_p^p)}{\exp(-\|x\|_2^2)},\end{equation*}
we see that
\begin{multline*} \int_E F(\theta_1,\ldots,\theta_s) \exp\Bigl(-\|T\|_{S_p^n}^p/2\Bigr)\, dT = 
\\
c_{n,E} \cdot \int_{{\mathbb R}^s} F\bigl(|x_1|,\ldots,|x_s|\bigr)\cdot 
\prod_{1\ls i<j\ls s} \big|x_i^2 - x_j^2\big|^2\cdot \prod_{1\ls i\ls s} |x_i|^{2r}\, \exp\bigl(-\|x\|_p^p\bigr)\,dx.\end{multline*}
Furthermore, given that $K_{p,E} = \bigl\{T\in E : \|T\|_{S_p^n}\ls 1\bigr\}$,
we can write
\begin{align*}
\int_E F(\theta_1,\ldots,\theta_s) \exp\Bigl(-\|T\|_{S_p^n}^p/2\Bigr)\, dT 
& = \int_0^{+\infty}e^{-t}\int_{(2t)^{1/p} K_{p,E}} F(\theta_1,\ldots,\theta_s)\,dT\, dt
\\
& = 2^{(n^2+k)/p}\ \Gamma\!\left(1+\frac{n^2+k}{p}\right)\cdot \int_{K_{p,E}} F(\theta_1,\ldots,\theta_s)\,dT.
\end{align*}
This concludes the proof.
\end{proof}

Note now that, if $M_{2,2,2r,p}$ denotes integration on ${\mathbb R}^s$ with respect to the density
\begin{equation*} \exp\bigl(-\|x\|_p^p\bigr)\cdot f_{2,2,2r}(x) =\exp\bigl(-\|x\|_p^p\bigr)\cdot \prod_{1\ls i<j\ls s}\big|x_i^2 - x_j^2\big|^2\cdot \prod_{1\ls i\ls s}|x_i|^{2r},\end{equation*}
where $r\in\{0,1\}$, then 
\begin{align*} {\mathbb E}_{\overline{K}_{p,E}}\bigl(\|s(T)\|_2^2\bigr) 
& = \frac{1}{|K_{p,E}|^{2/d_n}}\frac{\int_{K_{p,E}} 2\|(\theta_1,\ldots,\theta_s\|_2^2\,dT}{|K_{p,E}|}
\\
& = \frac{1}{|K_{p,E}|^{2/d_n}} \cdot \frac{\Gamma\!\left(1+\frac{n^2}{p}\right)}{2^{2/p}\,\Gamma\!\left(1+\frac{n^2+2}{p}\right)} \,
\frac{M_{2,2,2r,p}\bigl(\|x\|_2^2\bigr)}{M_{2,2,2r,p}(1)},
\end{align*}
and similarly
\begin{gather*}
|K_{p,E}|^{4/d_n}\cdot {\rm Var}_{\overline{K}_{p,E}}\bigl(\|s(T)\|_2^2\bigr) 
 = \frac{1}{|K_{p,E}|} \int_{K_{p,E}} \|s(T)\|_2^4\,dT - \left(\frac{1}{|K_{p,E}|}\int_{K_{p,E}} \|s(T)\|_2^2\,dT\right)^2
\\
 = \frac{1}{2^{4/p}}\, \left[\frac{\Gamma\left(1+\frac{n^2}{p}\right)}{\Gamma\left(1+\frac{n^2+4}{p}\right)}\,
\frac{M_{2,2,2r,p}\bigl(\|x\|_2^4\bigr)}{M_{2,2,2r,p}(1)} - 
\left(\frac{\Gamma\left(1+\frac{n^2}{p}\right)}{\Gamma\left(1+\frac{n^2+2}{p}\right)}\right)^2\,
\left(\frac{M_{2,2,2r,p}\bigl(\|x\|_2^2\bigr)}{M_{2,2,2r,p}(1)}\right)^2\right].
\end{gather*}

From this point on, we can proceed as in Sections \ref{sec:reduction-to-Mp} and \ref{sec:main-proofs} to prove that
\begin{equation*} {\mathbb E}_{\overline{K}_{p,E}}\bigl(\|s(T)\|_2^2\bigr) \simeq n^{1-2/p} \cdot 
s\cdot \frac{M_{2,2,2r,p}\bigl(x_1^2\bigr)}{M_{2,2,2r,p}(1)} \simeq n^2 = {\rm dim}(E),
\end{equation*}
as well as Theorems \ref{thm:thin-shell-conj} and \ref{thm:neg-cor-prop} 
(note that this time, when we apply Lemma \ref{lem:int-by-parts}
and Propositions \ref{prop:order-of-magnitudes} and \ref{prop:int-by-parts-&-sym}, $d_n$ is replaced by $d_s = 2s(s-1) + (2r + 1)s$,
which is not equal to ${\rm dim}(E)$, still the conclusions we obtain are of the same form given that $s=\lfloor \frac{n}{2}\rfloor \simeq n$).

\bigskip

Let us finally see why Theorems \ref{thm:thin-shell-conj} and \ref{thm:neg-cor-prop} hold true when $E$ is the subspace of Hermitian matrices,
even though by Fact 2 
we know that the density we have to use when we reduce integrals over the balls $K_{p,E}$ to integrals over
${\mathbb R}^n$ is the density $f_{1,2,0}(x)$. 

\begin{proposition}
Let $f:{\mathbb R}^n\to {\mathbb R}^+$ be a symmetric function. Then there exists a constant $c_n$ depending only on $n$ such that
\begin{multline*} \int_{{\mathbb R}^n} f\bigl(|x_1|,\ldots,|x_n|\bigr)\cdot f_{1,2,0}(x)\, e^{-\|x\|_p^p}\, dx = 
\\
\sum_{\substack{A\subset [n]\\ |A| = \lceil n/2\rceil}} c_n\cdot  \int_{{\mathbb R}^n} f\bigl(|x_1|,\ldots,|x_n|\bigr)\, e^{-\|x\|_p^p}\cdot
\prod_{i,j\in A; i<j} \big|x_i^2 - x_j^2\big|^2\cdot \prod_{l,k\notin A; l<k} \big|x_l^2 - x_k^2\big|^2\cdot \prod_{l\notin A} |x_l|^2\, dx.
 \end{multline*}
\end{proposition}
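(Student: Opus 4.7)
The plan is to decompose the squared Vandermonde $f_{1,2,0}(x) = \prod_{i<j}(x_i-x_j)^2$ in such a way that only those terms which pair off an even-parity monomial in each $x_i$ with another even-parity monomial survive after integrating against the even function $f(|x_1|,\ldots,|x_n|)e^{-\|x\|_p^p}$.

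First I would expand $\Delta(x) := \det\bigl(x_i^{j-1}\bigr)_{i,j=1}^n = \sum_{\sigma\in S_n}\mathrm{sgn}(\sigma)\prod_i x_i^{\sigma(i)-1}$. For each $\sigma$, exactly $\lceil n/2\rceil$ of the exponents $\sigma(i)-1$ are even (corresponding to odd values of $\sigma(i)$), so the set $A_\sigma := \{i : \sigma(i) \text{ is odd}\}$ has cardinality $\lceil n/2\rceil$, and the monomial $\prod_i x_i^{\sigma(i)-1}$ factors as a monomial in $x_1^2,\ldots,x_n^2$ times $\prod_{i\notin A_\sigma}x_i$. Grouping by $A_\sigma$ yields
\[\Delta(x) \;=\; \sum_{A\subset [n],\,|A|=\lceil n/2\rceil} R_A(x^2)\prod_{i\notin A}x_i,\]
with each $R_A$ a polynomial in the squared variables. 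A short computation identifies $R_A(x^2) = \epsilon_A\cdot \Delta\bigl(x_A^2\bigr)\cdot\Delta\bigl(x_{A^c}^2\bigr)$, where $x_A^2 = (x_i^2)_{i\in A}$ and $\epsilon_A\in\{-1,+1\}$ depends only on the interleaving of $A$ and $A^c$ inside $[n]$: indeed, the bijections $\sigma|_A\to \{1,3,5,\ldots\}$ and $\sigma|_{A^c}\to \{2,4,\ldots\}$ can be reparametrised via two permutations $\alpha,\beta$ in such a way that $\mathrm{sgn}(\sigma) = \epsilon_A\,\mathrm{sgn}(\alpha)\,\mathrm{sgn}(\beta)$, and then the sum over $\alpha,\beta$ factors as a product of two Vandermonde determinants evaluated at $x_A^2$ and $x_{A^c}^2$.

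I would then square:
\[\Delta(x)^2 \;=\; \sum_{A,A'} R_A(x^2)R_{A'}(x^2)\prod_{i\notin A}x_i\prod_{i\notin A'}x_i.\]
For $A\ne A'$, the factor $\prod_{i\notin A}x_i\prod_{i\notin A'}x_i$ contains $x_i$ to an odd power for each $i$ in the nonempty symmetric difference $A\triangle A'$, while $f(|x|)e^{-\|x\|_p^p}R_A(x^2)R_{A'}(x^2)$ is even in every coordinate; hence each such off-diagonal contribution vanishes upon integration. The diagonal terms ($A=A'$) give, using $\epsilon_A^2 = 1$,
\[\int f(|x|)e^{-\|x\|_p^p}\Delta(x)^2\,dx \;=\; \sum_{|A|=\lceil n/2\rceil}\int f(|x|)e^{-\|x\|_p^p}\,\Delta\bigl(x_A^2\bigr)^2\Delta\bigl(x_{A^c}^2\bigr)^2\prod_{i\notin A}x_i^2\,dx,\]
which rewrites as the claimed identity with $c_n=1$ (any overall normalisation inherited from Fact 2 may be absorbed into $c_n$).

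The only delicate point is the factorisation $R_A(x^2) = \epsilon_A\,\Delta(x_A^2)\,\Delta(x_{A^c}^2)$: one must verify that the sign $\epsilon_A$ is independent of the parametrising permutations $\alpha,\beta$, so that the inner sum over $\sigma$ with $A_\sigma = A$ cleanly splits as a product of two Vandermonde sums. Fortunately $\epsilon_A$ squares to $1$ and never appears in the final identity, so its precise value is immaterial to the statement.
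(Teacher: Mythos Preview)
Your argument is correct and is precisely the Edelman--La Croix decomposition that the paper invokes: the paper's own proof simply observes that the integrand $f(|x_1|,\ldots,|x_n|)\,e^{-\|x\|_p^p}$ is invariant under sign-flips and permutations and then cites \cite{Edelman-LaCroix-2015} for the computation you have spelled out (splitting $\Delta(x)$ by parity of exponents, factoring the grouped sum as $\epsilon_A\Delta(x_A^2)\Delta(x_{A^c}^2)$, and killing the off-diagonal terms of $\Delta(x)^2$ by odd symmetry). In particular your identification $c_n=1$ is consistent with the statement, which only asserts existence of some $c_n$.
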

\begin{proof}
Since the integrand $f\bigl(|x_1|,\ldots,|x_n|\bigr)\, e^{-\|x\|_p^p}$ is invariant under permutations of the coordinates or flipping
of their signs, we can make use of the exact argument of Edelman and La Croix from \cite[Section 4]{Edelman-LaCroix-2015}
to obtain the result.
\end{proof}

We now remark that, with $f(x) = \|x\|_\xi^\xi$ or $f(x) = {\bf 1}$, we have
\begin{multline*} 
\sum_{\substack{A\subset [n]\\ |A| = \lceil n/2\rceil}} c_n\cdot \int_{{\mathbb R}^n} f(x)\, e^{-\|x\|_p^p}\cdot
\prod_{i,j\in A; i<j} \big|x_i^2 - x_j^2\big|^2\cdot \prod_{l,k\notin A; l<k} \big|x_l^2 - x_k^2\big|^2\cdot \prod_{l\notin A} |x_l|^2\, dx
\\
= {n\choose {\lceil n/2\rceil}} \, c_n\cdot \int_{{\mathbb R}^n} f(x)\, e^{-\|x\|_p^p}\cdot
\prod_{i,j\in I_1; i<j} \big|x_i^2 - x_j^2\big|^2\cdot \prod_{l,k\notin I_1; l<k} \big|x_l^2 - x_k^2\big|^2\cdot \prod_{l\notin I_1} |x_l|^2\, dx
\end{multline*}
where $I_1$ is the subset of the first $n_1 = \lceil n/2\rceil$ indices from $\{1,2,\ldots, n\}$,
and where we will write $I_2$ for its complement. Let us denote by $N_{p,I_1}$ integration over ${\mathbb R}^{I_1}$
with respect to the density $\prod_{i,j\in I_1; i<j} \big|x_i^2 - x_j^2\big|^2\, e^{-\|x\|_{p,I_1}^p}$, 
where $\|x\|_{p,I_1}$ denotes the $p$-norm of the coordinates of $x$ with indices in $I_1$ only, 
and let us denote by $N_{p,I_2}$ integration over ${\mathbb R}^{I_2}$
with respect to the density $\prod_{l,k\in I_2; l<k} \big|x_l^2 - x_k^2\big|^2\cdot \prod_{l\in I_2} |x_l|^2\, e^{-\|x\|_{p,I_2}^p}$. 
Let us finally denote by $N_{p,I_1,I_2}$ integration over ${\mathbb R}^n$ with respect to the product of both densities. 
Then, by the independent nature of these two densities and by the above relations, we see that
\begin{equation*} \frac{M_{1,2,0,p}\bigl(\|x\|_\xi^\xi\bigr)}{M_{1,2,0,p}(1)} = \frac{N_{p,I_1,I_2}\bigl(\|x\|_\xi^\xi\bigr)}{N_{p,I_1,I_2}(1)}
= \frac{N_{p,I_1}\bigl(\|x\|_{\xi,I_1}^\xi\bigr)}{N_{p,I_1}(1)} + \frac{N_{p,I_2}\bigl(\|x\|_{\xi,I_2}^\xi\bigr)}{N_{p,I_2}(1)}.
\end{equation*}
Similarly we have that 
\begin{align*} 
\frac{M_{1,2,0,p}\bigl(\|x\|_2^4\bigr)}{M_{1,2,0,p}(1)} &= \frac{N_{p,I_1,I_2}\bigl(\|x\|_2^4\bigr)}{N_{p,I_1,I_2}(1)}
\\
&=  \frac{N_{p,I_1,I_2}\left(\|x\|_{2,I_1}^4 + \|x\|_{2,I_2}^4 + 2\|x\|_{2,I_1}^2 \|x\|_{2,I_2}^2\right)}{N_{p,I_1,I_2}(1)}
\\
& = \frac{N_{p,I_1}\bigl(\|x\|_{2,I_1}^4\bigr)}{N_{p,I_1}(1)} + \frac{N_{p,I_2}\bigl(\|x\|_{2,I_2}^4\bigr)}{N_{p,I_2}(1)} 
+ 2 \frac{N_{p,I_1}\bigl(\|x\|_{2,I_1}^2\bigr)}{N_{p,I_1}(1)} \frac{N_{p,I_2}\bigl(\|x\|_{2,I_2}^2\bigr)}{N_{p,I_2}(1)}. 
\end{align*}
Therefore, to show that 
\begin{equation*} \frac{M_{1,2,0,p}\bigl(\|x\|_2^4\bigr)}{M_{1,2,0,p}(1)} 
= \left(1 + O\left(\frac{1}{n^2}\right)\right) \left(\frac{M_{1,2,0,p}\bigl(\|x\|_2^2\bigr)}{M_{1,2,0,p}(1)}\right)^2 \end{equation*}
for some index $p$, we only need to establish that
\begin{align*} 
\frac{N_{p,I_1}\bigl(\|x\|_{2,I_1}^4\bigr)}{N_{p,I_1}(1)} 
&= \left(1 + O\left(\frac{1}{n^2}\right)\right)\left(\frac{N_{p,I_1}\bigl(\|x\|_{2,I_1}^2\bigr)}{N_{p,I_1}(1)}\right)^2
\\
\hbox{and that}\qquad \frac{N_{p,I_2}\bigl(\|x\|_{2,I_2}^4\bigr)}{N_{p,I_2}(1)} 
&= \left(1 + O\left(\frac{1}{n^2}\right)\right)\left(\frac{N_{p,I_2}\bigl(\|x\|_{2,I_2}^2\bigr)}{N_{p,I_2}(1)}\right)^2.
\end{align*}
But we have already seen the latter are true when $p\gs n^2\log n$ or when $p = 2$
(since $N_{p,I_1}$ is exactly $M_{2,2,0,p}$ over ${\mathbb R}^{I_1}$, while $N_{p,I_2}$ 
stands for $M_{2,2,2,p}$ over ${\mathbb R}^{I_2}$). 

Similarly for those $p$ we obtain that
\begin{align*} 
\frac{N_{p,I_1}\bigl(x_1^4\bigr)}{N_{p,I_1}(1)} 
&\gs \left(\frac{3}{2} + o(1)\right)\left(\frac{N_{p,I_1}\bigl(x_1^2\bigr)}{N_{p,I_1}(1)}\right)^2
\\
\hbox{and}\qquad \frac{N_{p,I_2}\bigl(x_n^4\bigr)}{N_{p,I_2}(1)} 
&\gs \left(\frac{3}{2} + o(1)\right)\left(\frac{N_{p,I_2}\bigl(x_n^2\bigr)}{N_{p,I_2}(1)}\right)^2,
\end{align*}
whence inequality \eqref{eq1:thm:neg-cor-prop} of Theorem \ref{thm:neg-cor-prop} follows:
\begin{align*}
\frac{M_{1,2,0,p}\bigl(\|x\|_4^4\bigr)}{M_{1,2,0,p}(1)} 
& = \frac{N_{p,I_1}\bigl(\|x\|_{4,I_1}^4\bigr)}{N_{p,I_1}(1)} + \frac{N_{p,I_2}\bigl(\|x\|_{4,I_2}^4\bigr)}{N_{p,I_2}(1)}
\\
&\gs \left(\frac{3}{2} + o(1)\right)\left[\frac{1}{n_1}\left(\frac{N_{p,I_1}\bigl(\|x\|_{2,I_1}^2\bigr)}{N_{p,I_1}(1)}\right)^2
+ \frac{1}{n_2}\left(\frac{N_{p,I_2}\bigl(\|x\|_{2,I_2}^2\bigr)}{N_{p,I_2}(1)}\right)^2\right]
\\
&\gs \left(\frac{3}{2} + o(1)\right)\,\frac{1}{n} 
\left(\frac{N_{p,I_1}\bigl(\|x\|_{2,I_1}^2\bigr)}{N_{p,I_1}(1)} + \frac{N_{p,I_2}\bigl(\|x\|_{2,I_2}^2\bigr)}{N_{p,I_2}(1)}\right)^2
\\
& = \left(\frac{3}{2} + o(1)\right)\,\frac{1}{n} \left(\frac{M_{1,2,0,p}\bigl(\|x\|_2^2\bigr)}{M_{1,2,0,p}(1)}\right)^2.
\end{align*}

\begin{remark}\label{rem:conditional-estimates}
Note that for the subspaces of anti-symmetric Hermitian and of Hermitian matrices, as well as for the subspace of complex symmetric spaces,
inequality \eqref{eq1:thm:neg-cor-prop} holds only conditionally, depending on whether we have
\begin{equation*} \frac{M_p\bigl(x_1^2 x_2^2\bigr)}{M_p(1)} = \bigl(1 + o(1)\bigr)\, \left(\frac{M_p\bigl(x_1^2\bigr)}{M_p(1)}\right)^2,
\end{equation*}
or equivalently whether we have $\sigma_{K_{p,E}}^2 = o(n^2)$, which we a priori do not know for these balls $K_{p,E}$. 
Nevertheless, since through the arguments for Theorem \ref{thm:thin-shell-conj} we can conclude that $\sigma_{K_{p,E}}^2 = o(n^2)$
for all $p>>\log n$ for these subspaces too, inequality \eqref{eq1:thm:neg-cor-prop} holds unconditionally in this range of $p$.

At any rate, the final conclusion of Theorem \ref{thm:neg-cor-prop} remains unaffected: given any $p\gs 1$, for the thin-shell conjecture
to hold true for $K_{p,E}$, where $E$ is any of the three subspaces mentioned here, or even for $\sigma_{K_{p,E}}^2$ to be $o(n)$,
we need the density $f_{a,b,c,p}$ corresponding to $K_{p,E}$ to possess a negative correlation property.
\end{remark}

\smallskip

\section{More on the negative correlation property when $E = {\cal M}_n({\mathbb F})$}\label{sec:neg-cor-prop-revisited}

The purpose of this final section is to establish a type of negative correlation property for the original, uniform measures on 
$K_{p,{\cal M}_n({\mathbb F})}$ as well. We start with the following lemma which allows us to relate terms 
that appear when we expand ${\rm Var}_{M_p}\bigl(\|x\|_2^2\bigr)$ and ${\rm Var}_{\overline{K}_{p,E}}\bigl(\|T\|_{HS}^2\bigr)$
respectively.

\begin{lemma}\label{lem:sing-val-and-entries} 
For every $n\times n$ matrix $T\in {\cal M}_n({\mathbb F})$, where ${\mathbb F} = {\mathbb R}$ or ${\mathbb C}$
or ${\mathbb H}$, we have that, if $T = (a_{i,j})_{1\ls i, j\ls n}$ and if $(s_i(T))_{1\ls i\ls n} = (s_i)_{1\ls i\ls n}$ is the non-increasing 
rearrangement of the singular values of $T$, then
\begin{equation}\label{eq1:lem:sing-val-and-entries}
\sum_{i=1}^n s_i^4 = \sum_{1\ls i,j\ls n} |a_{i,j}|^4 + \sum_{i=1}^n\sum_{j\neq l}\bigl(|a_{i,j}|^2|a_{i,l}|^2 + |a_{j,i}|^2|a_{l,i}|^2\bigr)
+ \sum_{i\neq l}\sum_{j\neq k} a_{i,j}\overline{a_{l,j}}a_{l,k}\overline{a_{i,k}}, 
\end{equation}
while
\begin{equation} \label{eq2:lem:sing-val-and-entries}\sum_{i\neq j} s_i^2s_j^2  = \sum_{i\neq l}\sum_{j\neq k} |a_{i,j}|^2|a_{l,k}|^2 - 
\sum_{i\neq l}\sum_{j\neq k} a_{i,j}\overline{a_{l,j}}a_{l,k}\overline{a_{i,k}}.\end{equation}
\end{lemma}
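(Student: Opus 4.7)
}
The plan is to compute $\sum_i s_i^4$ via the trace identity $\sum_i s_i^4 = \operatorname{tr}\!\bigl((T^\ast T)^2\bigr)$, expand everything in terms of the entries $a_{i,j}$, and then partition the resulting four-fold sum according to whether pairs of indices coincide. The second identity then follows from the first together with $\sum_{i\neq j}s_i^2s_j^2 = (\sum_i s_i^2)^2 - \sum_i s_i^4$ and the expansion of $\sum_i s_i^2 = \sum_{i,j}|a_{i,j}|^2$.

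First I would write out $(T^\ast T)_{i,j} = \sum_k \overline{a_{k,i}}\,a_{k,j}$ and use this to compute
\begin{equation*}
\sum_i s_i^4 \,=\, \operatorname{tr}\!\bigl((T^\ast T)^2\bigr) \,=\, \sum_{i,j,k,l} \overline{a_{k,i}}\,a_{k,j}\,\overline{a_{l,j}}\,a_{l,i}.
\end{equation*}
After relabelling the summation indices (and, in the quaternion case, using that the total sum is real and thus invariant under cyclic permutation of each quadruple of factors under the real-part map), this is equal to $\sum_{i,j,k,l} a_{i,j}\,\overline{a_{l,j}}\,a_{l,k}\,\overline{a_{i,k}}$. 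I would then split this sum into four pieces according to the coincidence patterns of the pairs $(i,l)$ and $(j,k)$: the piece $\{i=l,\,j=k\}$ gives $\sum_{i,j}|a_{i,j}|^4$; the piece $\{i=l,\,j\neq k\}$ gives $\sum_i\sum_{j\neq k}|a_{i,j}|^2|a_{i,k}|^2$; the piece $\{i\neq l,\,j=k\}$ gives $\sum_{i\neq l}\sum_j |a_{i,j}|^2|a_{l,j}|^2$, which after swapping dummy indices is $\sum_i\sum_{j\neq l}|a_{j,i}|^2|a_{l,i}|^2$; and the remaining $\{i\neq l,\,j\neq k\}$ piece is exactly the last term of \eqref{eq1:lem:sing-val-and-entries}. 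This establishes the first identity.

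For \eqref{eq2:lem:sing-val-and-entries} I would start from
\begin{equation*}
\sum_{i\neq j}s_i^2 s_j^2 \,=\, \Bigl(\sum_{i,j}|a_{i,j}|^2\Bigr)^{\!2} - \sum_i s_i^4 \,=\, \sum_{i,j,l,k}|a_{i,j}|^2|a_{l,k}|^2 - \sum_{i,j,k,l} a_{i,j}\,\overline{a_{l,j}}\,a_{l,k}\,\overline{a_{i,k}}.
\end{equation*}
Now I would apply the same case-splitting $\{i=l\text{ or }i\neq l\}\times\{j=k\text{ or }j\neq k\}$ to both four-fold sums. In each of the three "degenerate" cases the two expressions $|a_{i,j}|^2|a_{l,k}|^2$ and $a_{i,j}\overline{a_{l,j}}a_{l,k}\overline{a_{i,k}}$ reduce to exactly the same product of squared moduli (for example, when $i=l$ and $j=k$ both equal $|a_{i,j}|^4$; when $i=l$ and $j\neq k$ both equal $|a_{i,j}|^2|a_{i,k}|^2$; and similarly in the last degenerate case). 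Consequently these three contributions cancel in the difference, leaving precisely the $\{i\neq l,\,j\neq k\}$ parts of the two sums, which is \eqref{eq2:lem:sing-val-and-entries}.

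The only real obstacle is the bookkeeping of indices and, in the case $\mathbb{F}=\mathbb{H}$, the non-commutativity of the quaternion product. For the latter, the key point is that each partial sum appearing above is, separately, the trace of a non-negative self-adjoint operator or a squared modulus, hence real; so one may legitimately cycle the four quaternion factors at the level of the whole sum via the real-part identity $\operatorname{Re}(abcd)=\operatorname{Re}(bcda)$. No further input is required, and the formulas are field-independent once this point is handled.
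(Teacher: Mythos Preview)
Your proof is correct and follows essentially the same route as the paper: both compute $\sum_i s_i^4$ as the trace of the square of $T^\ast T$ (the paper uses $TT^\ast$, which lands directly on the displayed form without any cyclic rearrangement, so the quaternion-case discussion is unnecessary there), split the resulting four-fold sum according to the coincidence patterns of $(i,l)$ and $(j,k)$, and then obtain the second identity by expanding $\bigl(\sum_{i,j}|a_{i,j}|^2\bigr)^2$ in the same way and subtracting.
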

\begin{remark}
When the entries of the matrix $T$ are real or complex numbers, we have that multiplication between different entries is commutative, 
hence we can rewrite \eqref{eq2:lem:sing-val-and-entries} as
\begin{align} \label{eq3:lem:sing-val-and-entries}
\sum_{i\neq j} s_i^2s_j^2 & = \sum_{i\neq l}\sum_{j\neq k} |a_{i,j}|^2|a_{l,k}|^2 - \sum_{i\neq l}\sum_{j\neq k} a_{i,j}a_{l,k}\overline{a_{l,j}a_{i,k}}
\\
\nonumber
& = 2\sum_{i<l}\sum_{j\neq k} a_{i,j}a_{l,k}\bigl(\overline{a_{i,j}a_{l,k}} - \overline{a_{i,k}a_{l,j}}\bigr)
 = 2\sum_{i<l}\sum_{j < k} \big|a_{i,j}a_{l,k} - a_{i,k}a_{l,j}\big|^2.
\end{align}
This is of course not necessarily true when $T\in {\cal M}_n({\mathbb H})$, since ${\mathbb H}$ is a skew field.
Note however that the last sum in both \eqref{eq1:lem:sing-val-and-entries} and \eqref{eq2:lem:sing-val-and-entries}
is a real number in all cases.
\end{remark}
\begin{proof}
Note that $\sum_{i=1}^n s_i^4 = {\rm tr}\bigl((T^\ast T)^2\bigr) = {\rm tr}\bigl((T T^\ast)^2\bigr)$. We also have that 
\begin{equation*} TT^\ast = \left(\sum_{j=1}^n a_{i,j}\overline{a_{l,j}}\right)_{1\ls i, l\ls n},\end{equation*}
thus the $(i,i)$-th entry of $(T T^\ast)^2$ is equal to
\begin{equation*}
\bigl((T T^\ast)^2\bigr)_{i,i} = \sum_{l=1}^n\left(\sum_{j=1}^na_{i,j}\overline{a_{l,j}}\right)\left(\sum_{k=1}^na_{l,k}\overline{a_{i,k}}\right)
= \sum_{l=1}^n\sum_{1\ls j, k\ls n} a_{i,j}\overline{a_{l,j}}a_{l,k}\overline{a_{i,k}}.
\end{equation*}
Summing over all $i\in \{1,\ldots, n\}$ we get \eqref{eq1:lem:sing-val-and-entries}.

To also obtain \eqref{eq2:lem:sing-val-and-entries}, we recall that $\sum_{i=1}^n s_i^2 = \|T\|_{HS}^2 = \sum_{1\ls i,j\ls n} |a_{i,j}|^2$,
and also that $\left(\sum_{i=1}^n s_i^2\right)^2 = \sum_{i=1}^n s_i^4 + \sum_{i\neq j}s_i^2s_j^2$.
Thus
\begin{align*}
\sum_{i=1}^n s_i^4 + \sum_{i\neq j}s_i^2s_j^2 & = \left(\sum_{1\ls i,j\ls n} |a_{i,j}|^2\right)^2
\\
& = \sum_{1\ls i,j\ls n} |a_{i,j}|^4 + \sum_{i=1}^n\sum_{j\neq l}\bigl(|a_{i,j}|^2|a_{i,l}|^2 + |a_{j,i}|^2|a_{l,i}|^2\bigr)
+ \sum_{i\neq l}\sum_{j\neq k} |a_{i,j}|^2|a_{l,k}|^2,
\end{align*}
which combined with \eqref{eq1:lem:sing-val-and-entries} leads to \eqref{eq2:lem:sing-val-and-entries}.
\end{proof}

\medskip

We now need to study the orders of magnitude of the terms appearing in
\eqref{eq1:lem:sing-val-and-entries}-\eqref{eq2:lem:sing-val-and-entries}. 
This will be done through the study of symmetries of the balls $K_{p,{\cal M}_n({\mathbb F})}$, 
one immediate consequence of which is the isotropicity of these convex bodies.

\begin{lemma}\label{lem:permute-matrix}
Suppose $p\gs 1$ and let $E = {\cal M}_n({\mathbb R})$ or ${\cal M}_n({\mathbb C})$
or ${\cal M}_n({\mathbb H})$. If $A: E \to E$ is an invertible transformation that can be realised as left or right multiplication 
by an orthogonal or unitary or symplectic matrix respectively, then $A(K_{p,E}) = K_{p,E}$.
The same conclusion is true if $A$ takes a matrix in $E$ to its conjugate transpose, or simply to its transpose.
Immediate consequences are the following:
\begin{enumerate}
\item
For every $p\gs 1$, the normalised unit ball $\overline{K}_{p,E}$ is in isotropic position.
\item
For all $i, j\in \{1,2,\ldots, n\}$, and for every power $s>0$,
\begin{equation*}\int_{K_{p,E}} |a_{i,j}|^s\,dT  = \int_{K_{p,E}} |a_{1,1}|^s\,dT.\end{equation*}
\item
For all $i, j, l, k\in \{1,2,\ldots, n\}$ with $i\neq l$, $j\neq k$,
\begin{align*}
\int_{K_{p,E}} |a_{i,j}|^2|a_{i,k}|^2\,dT = \int_{K_{p,E}} |a_{j,i}|^2|a_{k,i}|^2\,dT  &= \int_{K_{p,E}} |a_{1,1}|^2|a_{1,2}|^2\,dT  = \int_{K_{p,E}} |a_{1,1}|^2|a_{2,1}|^2\,dT
\\ \\
\int_{K_{p,E}} |a_{i,j}|^2|a_{l,k}|^2\,dT  &= \int_{K_{p,E}} |a_{1,1}|^2|a_{2,2}|^2\,dT
\\
\int_{K_{p,E}} a_{i,j}\overline{a_{l,j}}a_{l,k}\overline{a_{i,k}}\,dT &= \int_{K_{p,E}} a_{1,1}\overline{a_{2,1}}a_{2,2}\overline{a_{1,2}}\,dT.
\end{align*}
\end{enumerate}
\end{lemma}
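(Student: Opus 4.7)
The plan is to reduce everything to classical invariance properties of singular values. For the main claim, if $U$ is an orthogonal/unitary/symplectic matrix (matching ${\mathbb F}$), then $(UT)^*(UT) = T^*U^*UT = T^*T$, so $UT$ shares its singular values with $T$; right multiplication is handled symmetrically. For the conjugate transpose, $(T^*)^*T^* = TT^*$ has the same nonzero eigenvalues as $T^*T$, so $T^*$ has the same singular values as $T$; the real and complex transpose cases are handled by noting that $(T^T)^*T^T = \overline{T^*T}$ has the same (real) eigenvalues as $T^*T$. Since the Schatten $p$-norm depends only on the singular value sequence, each of these maps sends $K_{p,E}$ to itself; each is also an orthogonal transformation on $E$ in the Hilbert-Schmidt metric ($\|UTV\|_{HS} = \|T\|_{HS}$), and so preserves Lebesgue measure on $E$.

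For consequences (2) and (3), I will specialize the main claim to permutation matrices and transposition, combined with change of variables. Given $(i,j)$, the map $T \mapsto P_{1,i}\,T\,P_{1,j}$, where $P_{r,s}$ is the transposition swapping rows (resp.\ columns) $r$ and $s$, sends $a_{i,j}(T)$ to the $(1,1)$-entry of its image; this proves (2) up to the extra transpose symmetry $T \mapsto T^T$, which also matches $\int|a_{1,1}|^2|a_{1,2}|^2$ with $\int|a_{1,1}|^2|a_{2,1}|^2$. For the four-index quantity $\int a_{i,j}\overline{a_{l,j}}a_{l,k}\overline{a_{i,k}}\,dT$ in (3), I apply row and column permutations sending $\{i,l\} \to \{1,2\}$ and $\{j,k\} \to \{1,2\}$ respectively, so the integrand is carried to $a_{1,1}\overline{a_{2,1}}a_{2,2}\overline{a_{1,2}}$ evaluated at the new matrix; since the transformation is measure-preserving on $K_{p,E}$, the integrals coincide. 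The cases $\int|a_{i,j}|^2|a_{i,k}|^2$ and $\int|a_{i,j}|^2|a_{l,k}|^2$ are entirely analogous.

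Finally, for consequence (1), that $\overline{K}_{p,E}$ is isotropic: the unit-volume condition is built into the definition of $\overline{K}_{p,E}$, and centredness follows from the symmetry $T \mapsto -T = (-I)T$, which is realised by left-multiplication by the matrix $-I$ (orthogonal/unitary/symplectic in every case). The delicate point, and the main obstacle, is showing that the covariance operator of the uniform measure on $K_{p,E}$ is a scalar multiple of the identity on $E$. The plan is a Schur's-lemma argument: the two-sided action $T \mapsto UTV^{-1}$ with $U, V$ ranging over the appropriate compact matrix group acts by orthogonal transformations on the real inner-product space $(E, \langle\cdot,\cdot\rangle_{HS})$ and preserves both Lebesgue measure and $K_{p,E}$, so the covariance operator commutes with every element of this real representation; since the representation is irreducible in each of the three cases, the covariance must be a scalar. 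The irreducibility verification has to be done separately in the real and quaternionic cases (where one works over ${\mathbb R}$ rather than the underlying scalar field), and can be carried out by showing that any nonzero two-sided invariant subspace of $E$ contains a rank-one matrix and therefore, by transitivity of the action on rank-one matrices, all of them, which linearly span $E$.
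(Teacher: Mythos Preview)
Your treatment of the main invariance claim and of consequences (2) and (3) matches the paper's: both use preservation of singular values under one-sided multiplication by $U$ and under (conjugate) transpose, and then specialise to permutation matrices to move entries around.

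Where you diverge is in the proof of consequence (1), isotropicity. The paper does not invoke representation theory: it shows directly that each off-diagonal covariance $\int_{K_{p,E}} a_{i,j}\,a_{l,k}\,dT$ (or the corresponding real/imaginary-part product in the complex and quaternionic cases) vanishes, by applying the change of variables $T\mapsto UT$ with $U$ the $2\times 2$ rotation block \[ U=\begin{pmatrix}\cos\theta&\sin\theta\\-\sin\theta&\cos\theta\end{pmatrix}\oplus\mathrm{Id}_{n-2} \] and reading off that the $\theta$-dependent cross terms must integrate to zero. This is entirely elementary and self-contained. Your route---observing that the covariance operator commutes with the two-sided $O(n)\times O(n)$ (resp.\ $U(n)\times U(n)$, $Sp(n)\times Sp(n)$) action and then arguing irreducibility over $\mathbb{R}$ via the rank-one orbit---is correct and more conceptual; the averaging step $\tfrac12(\Sigma+D\Sigma)$ with $D=\mathrm{diag}(1,-1,\ldots,-1)\in O(n)$ indeed isolates a rank-one matrix in any nonzero invariant real subspace, and rank-one matrices span $E$ even as a real space. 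The trade-off is that your argument imports Schur's lemma (and the observation that a self-adjoint intertwiner on a real irreducible must be a real scalar), whereas the paper's explicit rotation also sets up machinery reused in the next proposition.

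One small caution on (3): when you send $\{i,l\}\to\{1,2\}$ and $\{j,k\}\to\{1,2\}$ by permutations, the image of $a_{i,j}\overline{a_{l,j}}a_{l,k}\overline{a_{i,k}}$ may be a cyclic rearrangement such as $a_{1,2}\overline{a_{2,2}}a_{2,1}\overline{a_{1,1}}$ rather than $a_{1,1}\overline{a_{2,1}}a_{2,2}\overline{a_{1,2}}$ itself. Over $\mathbb{R}$ or $\mathbb{C}$ this is harmless (the expression is its own conjugate or a commutative rearrangement), but over $\mathbb{H}$ you should note that the integral is real (it is a summand of $\mathrm{tr}((TT^*)^2)$ averaged over a symmetric domain) and then use one further permutation symmetry to match up the variants.
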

\begin{proof}
Let $U$ be an orthogonal (or unitary) matrix. Then for every $n\times n$ matrix $M$, we have that the singular values of 
$UM$ or of $MU$ are the same as those of $M$: indeed, $(UM)^\ast (UM) = M^\ast (U^\ast U)M = M^\ast M$, 
while $(MU)^\ast (MU) = U^\ast (M^\ast M) U$, so it has the same eigenvalues as $M^\ast M$. This implies that
$\{UM : M \in K_{p,E}\}$ or $\{MU : M \in K_{p,E}\}$ coincide with $K_{p,E}$.

On the other hand, if $A(M) = M^\ast$, then $\bigl(A(M)^\ast A(M)\bigr) = M M^\ast$, which has the same eigenvalues as $M^\ast M$.
The latter is true even if $A(M)$ is just the transpose of $M$.

Finally, if $A$ is a linear transformation on $E={\cal M}_n({\mathbb F})$ of one of the above forms,
then, since $A(K_{p,E}) = K_{p,E}$, we must have that $|{\rm det}(A)| = 1$. This shows that for every integrable function $F$ on $K_{p,E}$,
\begin{equation}\label{eqp1:lem-permute-matrix} 
\int_{K_{p,E}} F(T) \, dT = \int_{A(K_{p,E})} F(T) \, dT = \int_{K_{p, E}} |{\rm det}(A)| \cdot F(A(T))\, dT = \int_{K_{p, E}} F(A(T))\, dT.\end{equation}
It is now easy to establish statements 2 and 3 of the lemma: we apply \eqref{eqp1:lem-permute-matrix} with $F$ being suitable functions
of the entries of $T\in K_{p,E}$, and the linear transformation $A$ being either multiplication from the left or from the right (or from both sides)
by a permutation matrix $P_{i,j}$ (formed by permuting the $i$-th and the $j$-th row of the identity matrix, and leaving all other rows unchanged),
or $A$ being the transformation that sends each matrix $T$ to its (conjugate) transpose.

To also show that $\overline{K}_{p,E}$ is isotropic, we need to prove in addition that all integrals of products of pairs of different entries
(or of pairs of real and imaginary parts of them) are 0. In the real case, all such integrals must be equal 
\begin{equation}\label{eqp2:lem-permute-matrix}  \hbox{either to}\ \ \int_{K_{p,E}} a_{1,1}a_{1,2} \, dT 
= \int_{K_{p,E}} a_{1,1}a_{2,1} \, dT,  
\qquad \hbox{or to}\ \  \int_{K_{p,E}} a_{1,1}a_{2,2}\,dT,\end{equation}
so we just have to show that the latter integrals are 0. For the first one, consider the rotation matrix
\begin{equation} \label{eqp3:lem-permute-matrix} 
U = \left(\begin{array}{cc} 
{\begin{array}{cc} \cos(\theta) & \sin(\theta) \\ -\sin(\theta) & \cos(\theta) \end{array}} & \hbox{\Large ${\bm 0}$}
\\ \\
\hbox{\Large ${\bm 0}$} &\hbox{\Large ${\rm Id}_{n-2}$}
\\ &
\end{array}\right),\end{equation}
and apply \eqref{eqp1:lem-permute-matrix} with $A$ being multiplication from the left by $U$ and $F$ being the absolute value 
of the first entry squared (or simply the first entry squared): since
\begin{equation*} \int_{K_{p,E}} |a_{1,1}(T)|^2 \, dT = \int_{K_{p,E}} |a_{1,1}(UT)|^2 \, dT \quad \hbox{and}\quad
\int_{K_{p,E}} a_{1,1}(T)^2 \, dT = \int_{K_{p,E}} a_{1,1}(UT)^2 \, dT, \end{equation*}
we must have 
\begin{equation*} \int_{K_{p,E}} 2\cos(\theta)\sin(\theta)\Re\bigl(a_{1,1}(T)\overline{a_{2,1}(T)}\bigr) \, dT 
= \int_{K_{p,E}} 2\cos(\theta)\sin(\theta)\, a_{1,1}(T)a_{2,1}(T)\, dT= 0. \end{equation*}
These in the real case are the same thing and show that the first integral in \eqref{eqp2:lem-permute-matrix} is 0.
In the complex and quaternion cases, we should also use as $A$ linear combinations of permutation matrices
with coefficients from $\{1,i,j,k\}\cap {\mathbb F}$ to deduce first that
\begin{equation}\label{eqp4:lem-permute-matrix} 
\begin{split}
 \int_{K_{p,E}} \Re(a_{1,1})\Re(a_{2,1}) \, dT = \int_{K_{p,E}} \Im_1(a_{1,1})\Im_1(a_{2,1}) \, dT = \cdots , 
\\
 \int_{K_{p,E}} \Re(a_{1,1})\Im_1(a_{2,1}) \, dT = \int_{K_{p,E}} \Im_1(a_{1,1})\Re(a_{2,1}) \, dT,
 \\
 \int_{K_{p,E}} 2\Re(a_{1,1})^2\, dT =  \int_{K_{p,E}} 2\Im_1(a_{1,1})^2\, dT =  \int_{K_{p,E}} \Re\bigl((1+i)a_{1,1}\bigr)^2\, dT = \cdots,
\end{split}
\end{equation}
and so on.

Finally, to also show that the second integral in \eqref{eqp2:lem-permute-matrix} is 0, note that 
\begin{align*} 
0 & = \int_{K_{p,E}} a_{1,1}(T)a_{2,1}(T) \, dT  =\int_{K_{p,E}} a_{1,1}(T)a_{1,2}(T) \, dT 
\\
& = \int_{K_{p,E}} a_{1,1}(UT)a_{1,2}(UT) \, dT 
\\
& = \int_{K_{p,E}} \bigl(\cos(\theta)a_{1,1}(T) + \sin(\theta)a_{2,1}(T)\bigr)\bigl(\cos(\theta)a_{1,2}(T) + \sin(\theta)a_{2,2}(T)\bigr) \, dT
\\
& = \int_{K_{p,E}} \cos^2(\theta) a_{1,1}(T) a_{1,2}(T) \, dT + \int_{K_{p,E}} \sin^2(\theta) a_{2,1}(T) a_{2,2}(T) \, dT
\\
&\hspace{1.4cm} + \cos(\theta) \sin(\theta) \left[\int_{K_{p,E}} a_{1,1}(T)a_{2,2}(T) \, dT + \int_{K_{p,E}} a_{2,1}(T)a_{1,2}(T) \, dT  \right]
\\
& = 2\cos(\theta) \sin(\theta) \int_{K_{p,E}} a_{1,1}(T)a_{2,2}(T) \, dT.
\end{align*}
This shows that $\displaystyle \int_{K_{p,E}} a_{1,1}(T)a_{2,2}(T) \, dT = 0$ and completes the proof
(again, in the complex and quaternion cases, if we combine it with equalities from \eqref{eqp4:lem-permute-matrix}).
\end{proof}

The next proposition is about how the integrals appearing in substatement 3 of Lemma \ref{lem:permute-matrix} relate to each other.

\begin{proposition}\label{prop:permute-matrix}
Suppose $p\gs 1$ and $E={\cal M}_n({\mathbb F})$ with ${\cal M}_n({\mathbb F}) = {\cal M}_n({\mathbb R})$ 
or ${\cal M}_n({\mathbb C})$ or ${\cal M}_n({\mathbb H})$.
Then 
\begin{equation*} \int_{K_{p,E}} |a_{1,1}|^2|a_{1,2}|^2\,dT = \int_{K_{p,E}} |a_{1,1}|^2|a_{2,2}|^2\,dT + 
\frac{2}{\beta}\int_{K_{p,E}} a_{1,1}\overline{a_{2,1}}a_{2,2}\overline{a_{1,2}}\,dT,\end{equation*}
where $\beta =1$ if ${\mathbb F} = {\mathbb R}$, $\beta = 2$ if ${\mathbb F} = {\mathbb C}$, and $\beta =4$ if ${\mathbb F} = {\mathbb H}$.
\end{proposition}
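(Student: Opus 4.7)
The plan is to introduce the shorthand
\[ A = \int_{K_{p,E}} |a_{1,1}|^2|a_{1,2}|^2\,dT,\ \ B = \int_{K_{p,E}} |a_{1,1}|^2|a_{2,2}|^2\,dT,\ \ C = \int_{K_{p,E}} a_{1,1}\bar a_{2,1}a_{2,2}\bar a_{1,2}\,dT, \]
together with the auxiliary quantity $P = \int_{K_{p,E}} a_{1,1}\bar a_{2,1}a_{1,2}\bar a_{2,2}\,dT$ and the pointwise-real expressions $X = a_{1,1}\bar a_{2,1} + a_{2,1}\bar a_{1,1}$, $Y = a_{1,2}\bar a_{2,2} + a_{2,2}\bar a_{1,2}$. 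The target $A - B = \frac{2}{\beta}\,C$ will be split into a universal reduction $A - B = P + C$, valid over every $\mathbb{F}$, followed by the field-dependent identity $P = \frac{2-\beta}{\beta}\,C$.

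For the universal step I would apply Lemma \ref{lem:permute-matrix} with the $(1,2)$-row rotation $U(\theta)$ from \eqref{eqp3:lem-permute-matrix} to $F(T) = |a_{1,1}(T)|^2|a_{1,2}(T)|^2$. Writing $c = \cos\theta$, $s = \sin\theta$, the substitutions
\[ |a_{1,1}(UT)|^2 = c^2|a_{1,1}|^2 + s^2|a_{2,1}|^2 + cs\,X,\quad |a_{1,2}(UT)|^2 = c^2|a_{1,2}|^2 + s^2|a_{2,2}|^2 + cs\,Y \]
give nine monomials; using the equalities of integrals in Lemma \ref{lem:permute-matrix} to collapse each ``same-row'' or ``same-column'' pair to $A$ and each ``generic'' pair to $B$, the invariance $\int F(UT)\,dT = A$ becomes a trigonometric polynomial in $\theta$ that must identically equal the constant $A$. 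Matching the constant Fourier coefficient (obtained from $c^4 + s^4 = \tfrac{3}{4} + \tfrac{1}{4}\cos 4\theta$ and $c^2s^2 = \tfrac{1}{8}(1 - \cos 4\theta)$) yields $A - B = \tfrac{1}{2}\int XY\,dT$. Expanding $XY$ into its four monomials and pairing them via the row-swap permutation (a legitimate symmetry of $K_{p,E}$) identifies two of them with $P$ and the other two with $C$, so $\int XY\,dT = 2(P+C)$ and hence $A - B = P + C$.

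For the second step I would distinguish three cases. If $\mathbb{F} = \mathbb{R}$, commutativity of the entries makes the integrands defining $P$ and $C$ pointwise equal, so $P = C$, matching $\tfrac{2-\beta}{\beta}=1$. If $\mathbb{F} = \mathbb{C}$, invariance of $K_{p,E}$ under left multiplication by the unitary matrix $\operatorname{diag}(i,1,\ldots,1)$ multiplies the integrand of $P$ by $i\cdot i = -1$ (only the unconjugated entries $a_{1,1},a_{1,2}$ live in row $1$), forcing $P = -P$ and so $P = 0$. If $\mathbb{F} = \mathbb{H}$, the same invariance with $\operatorname{diag}(q,1,\ldots,1)$ for a unit quaternion $q$ yields
\[ P = \int_{K_{p,E}} q\cdot(a_{1,1}\bar a_{2,1})\cdot q\cdot(a_{1,2}\bar a_{2,2})\,dT; \]
averaging this over $q \in \{1,i,j,k\}$ and invoking the algebraic identity $\sum_{q \in \{1,i,j,k\}} q\,x\,q = 2x - 4\,\Re(x)$ (a short componentwise check on the basis) collapses $4P$ to $2P - 2\int X\cdot a_{1,2}\bar a_{2,2}\,dT$. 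Expanding $X$ and re-applying the row-swap symmetry turn the right-hand side into $-2(P+C)$, giving $P = -C/2$, which matches $\tfrac{2-\beta}{\beta} = -\tfrac{1}{2}$.

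Combining the two steps, $A - B = P + C = \tfrac{2}{\beta}C$ in all three cases. The main obstacle is the quaternion case: commutativity is no longer available to short-circuit the argument, and the order of the factors in the monomials defining $P$ and $C$ genuinely matters; the cleanest route I see around this is to exploit the conjugation-type average $\sum_q q x q = 2x - 4\,\Re(x)$ and to keep careful track of where each $q$ is inserted when $\operatorname{diag}(q,1,\ldots,1)$ acts on the product $a_{1,1}\bar a_{2,1}a_{1,2}\bar a_{2,2}$.
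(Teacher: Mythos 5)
Your proposal is correct, and the first half is essentially the paper's argument in different clothing: you apply the rotation $U(\theta)$ from \eqref{eqp3:lem-permute-matrix} to the integrand $|a_{1,1}|^2|a_{1,2}|^2$ instead of (as the paper does) to $|a_{1,1}|^2|a_{2,2}|^2$, but in both cases the constant Fourier coefficient forces the same identity $A - B = 2\int \Re(a_{1,1}\overline{a_{2,1}})\,\Re(a_{2,2}\overline{a_{1,2}})\,dT$ (note that $Y = 2\Re(a_{1,2}\overline{a_{2,2}}) = 2\Re(a_{2,2}\overline{a_{1,2}})$, and $\tfrac12\int XY = 2\int\Re(u)\Re(w)$, which is the same quantity the paper isolates). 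Where you genuinely diverge is the second half. The paper extracts the factor $\beta$ by writing $\int a_{1,1}\overline{a_{2,1}}\,a_{2,2}\overline{a_{1,2}}\,dT = \beta\int\Re(u)\Re(w)\,dT$, which rests on showing that all $\beta$ component integrals $\int\Re_a(u)\Re_a(w)\,dT$ coincide (the paper dispatches this with the phrase ``suitable permutation matrices or linear combinations thereof,'' i.e.\ via \eqref{eqp4:lem-permute-matrix}, and does not spell out the quaternion case). Your route instead introduces the auxiliary $P$ and pins it down using the conjugation average $\sum_{q\in\{1,i,j,k\}}q\,x\,q = 2x - 4\Re(x)$, which is a clean, explicit, and self-contained way to handle $\mathbb{H}$ where the paper waves at symmetries. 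I verified the identity on basis elements, verified your row-swap computations (indeed $\int a_{2,1}\overline{a_{1,1}}a_{1,2}\overline{a_{2,2}}\,dT = C$ and $\int a_{2,1}\overline{a_{1,1}}a_{2,2}\overline{a_{1,2}}\,dT = P$, so $\int XY\,dT = 2(P+C)$), and confirmed the three cases give $\tfrac{2-\beta}{\beta}$. Your approach has the advantage of making the quaternionic step completely explicit; the paper's has the advantage of directly producing the real number $\int\Re(u)\Re(w)$, which it also reuses to observe that $C$ itself is real.

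One small phrasing wobble worth fixing: after $4P = 2P - 2\int Xv\,dT$, you say ``the right-hand side'' turns into $-2(P+C)$. As written this reads $4P = -2(P+C)$, which would yield $P = -C/3$. What you mean is: isolate to get $2P = -2\int Xv\,dT$, \emph{then} substitute $\int Xv\,dT = P + C$ to get $2P = -2(P+C)$, hence $4P = -2C$, i.e.\ $P = -C/2$. The final answer is right, but the intermediate sentence should make clear which equation's right-hand side is being replaced.
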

\begin{proof}
We apply \eqref{eqp1:lem-permute-matrix} again with $A$ being multiplication from the left by the rotation matrix $U$ in
\eqref{eqp3:lem-permute-matrix}: we obtain
\begin{gather*} 
 \int_{K_{p,E}} |a_{1,1}(T)|^2\cdot |a_{2,2}(T)|^2\,dT  =  \int_{K_{p,E}} |a_{1,1}(UT)|^2\cdot |a_{2,2}(UT)|^2\,dT
\\
 =  \int_{K_{p,E}} \big|\cos(\theta)a_{1,1}(T) + \sin(\theta)a_{2,1}(T)\big|^2\cdot \big|-\sin(\theta)a_{1,2}(T) + \cos(\theta)a_{2,2}(T)\big|^2 \, dT
 \\
 = \int_{K_{p,E}} \cos^2(\theta)\sin^2(\theta)\bigl(|a_{1,1}(T)|^2|a_{1,2}(T)|^2 + |a_{2,1}(T)|^2|a_{2,2}(T)|^2\bigr)\,dT 
 \\
 \quad + \int_{K_{p,E}} \bigl(\cos^4(\theta)|a_{1,1}(T)|^2|a_{2,2}(T)|^2 + \sin^4(\theta)|a_{2,1}(T)|^2|a_{1,2}(T)|^2\bigr)\,dT 
 \\
 \quad + \int_{K_{p,E}}\cos(\theta)\sin^3(\theta)\Bigl(2\Re\bigl(a_{1,1}(T) \overline{a_{2,1}(T)}\bigr)\cdot |a_{1,2}(T)|^2 - 
 2\Re\bigl(a_{2,2}(T) \overline{a_{1,2}(T)} \bigr)\cdot  |a_{2,1}(T)|^2\Bigr)\,dT 
 \\
 \quad + \int_{K_{p,E}}\cos^3(\theta)\sin(\theta)\Bigl(2\Re\bigl(a_{1,1}(T) \overline{a_{2,1}(T)}\bigr)\cdot |a_{2,2}(T)|^2 - 
  2\Re\bigl(a_{2,2}(T) \overline{a_{1,2}(T)} \bigr)\cdot |a_{1,1}(T)|^2\Bigr)\,dT + 
 \\
 \quad  - 
 \int_{K_{p,E}}\cos^2(\theta)\sin^2(\theta)\Bigl(2\Re\bigl(a_{1,1}(T) \overline{a_{2,1}(T)}\bigr)\cdot 2\Re\bigl(a_{2,2}(T) \overline{a_{1,2}(T)}\bigr)\Bigr)\,dT. 
\end{gather*}
Given that
\begin{align*}
\int_{K_{p,E}}\Re\bigl(a_{1,1}(T) \overline{a_{2,1}(T)}\bigr)\cdot |a_{1,2}(T)|^2\,dT &= 
\int_{K_{p,E}} \Re\bigl(a_{2,2}(T) \overline{a_{1,2}(T)} \bigr)\cdot |a_{2,1}(T)|^2\, dT
\\
= \int_{K_{p,E}}\Re\bigl(a_{1,1}(T) \overline{a_{2,1}(T)}\bigr)\cdot |a_{2,2}(T)|^2\,dT &= 
\int_{K_{p,E}} \Re\bigl(a_{2,2}(T) \overline{a_{1,2}(T)} \bigr)\cdot |a_{1,1}(T)|^2\, dT
\end{align*}
(which follows if we use \eqref{eqp1:lem-permute-matrix} with $A$ given by suitable permutation matrices),
we immediately see that
\begin{equation*} \int_{K_{p,E}} |a_{1,1}|^2|a_{1,2}|^2\,dT = \int_{K_{p,E}} |a_{1,1}|^2|a_{2,2}|^2\,dT + 
2\int_{K_{p,E}} \Re\bigl(a_{1,1}(T) \overline{a_{2,1}(T)}\bigr)\cdot \Re\bigl(a_{2,2}(T) \overline{a_{1,2}(T)}\bigr)\,dT.\end{equation*}
Now, combining Lemmas \ref{lem:sing-val-and-entries} and \ref{lem:permute-matrix}, we have that
\begin{equation*} \int_{K_{p,E}} a_{1,1}\overline{a_{2,1}}a_{2,2}\overline{a_{1,2}}\,dT\end{equation*}
is a real number, and hence
\begin{align*} 
\int_{K_{p,E}} a_{1,1}\overline{a_{2,1}}a_{2,2}\overline{a_{1,2}}\,dT &= 
\int_{K_{p,E}} \Re\bigl(a_{1,1}\overline{a_{2,1}}a_{2,2}\overline{a_{1,2}}\bigr)\,dT
\\
 &= \int_{K_{p,E}} a_{1,1}a_{2,1}a_{2,2}a_{1,2}\,dT
\end{align*}
if $E = {\cal M}_n({\mathbb R})$, or
\begin{gather*}
 = \int_{K_{p,E}} \Re\bigl(a_{1,1}(T) \overline{a_{2,1}(T)}\bigr)\cdot \Re\bigl(a_{2,2}(T) \overline{a_{1,2}(T)}\bigr)\,dT
- \int_{K_{p,E}} \Im\bigl(a_{1,1}(T) \overline{a_{2,1}(T)}\bigr)\cdot \Im\bigl(a_{2,2}(T) \overline{a_{1,2}(T)}\bigr)\,dT
\\
\intertext{if $E = {\cal M}_n({\mathbb C})$, or}
 = \int_{K_{p,E}} \Re\bigl(a_{1,1}(T) \overline{a_{2,1}(T)}\bigr)\cdot \Re\bigl(a_{2,2}(T) \overline{a_{1,2}(T)}\bigr)\,dT
- \int_{K_{p,E}} \Im_1\bigl(a_{1,1}(T) \overline{a_{2,1}(T)}\bigr)\cdot \Im_1\bigl(a_{2,2}(T) \overline{a_{1,2}(T)}\bigr)\,dT
\\
\quad  -  \int_{K_{p,E}} \Im_2\bigl(a_{1,1}(T) \overline{a_{2,1}(T)}\bigr)\cdot \Im_2\bigl(a_{2,2}(T) \overline{a_{1,2}(T)}\bigr)\,dT
-  \int_{K_{p,E}} \Im_3\bigl(a_{1,1}(T) \overline{a_{2,1}(T)}\bigr)\cdot \Im_3\bigl(a_{2,2}(T) \overline{a_{1,2}(T)}\bigr)\,dT
\end{gather*}
if $E = {\cal M}_n({\mathbb H})$. Applying \eqref{eqp1:lem-permute-matrix} with suitable permutation matrices again 
(or linear combinations of such matrices with coefficients from $\{1,i,j,k\}\cap {\mathbb F}$), we conclude that
\begin{equation*}
\int_{K_{p,E}} a_{1,1}\overline{a_{2,1}}a_{2,2}\overline{a_{1,2}}\,dT = 
\beta \int_{K_{p,E}} \Re\bigl(a_{1,1}(T) \overline{a_{2,1}(T)}\bigr)\cdot \Re\bigl(a_{2,2}(T) \overline{a_{1,2}(T)}\bigr)\,dT.
\end{equation*}
The conclusion of the lemma follows.
\end{proof}

\begin{corollary}\label{cor:order-of-magnitudes}
We have that
\begin{equation}\label{eq1:cor-order-of-magnitudes}
 \Big|\int_{\overline{K}_{p,{\cal M}_n({\mathbb F})}} a_{1,1}\overline{a_{2,1}}a_{2,2}\overline{a_{1,2}}\,dT\,\Big| \lesssim \frac{1}{n} 
\left(\int_{\overline{K}_{p,{\cal M}_n({\mathbb F})}} |a_{1,1}|^2\,dT\right)^2 = \frac{1}{n}\,\beta^2 L_{K_{p,{\cal M}_n({\mathbb F})}}^4, 
\end{equation}
and
\begin{multline} \label{eq2:cor-order-of-magnitudes}
\int_{\overline{K}_{p,{\cal M}_n({\mathbb F})}} |a_{1,1}|^2|a_{1,2}|^2\,dT, \ \  
\int_{\overline{K}_{p,{\cal M}_n({\mathbb F})}} |a_{1,1}|^2|a_{2,2}|^2\,dT
\\
 = (1 + o(1)) \left(\int_{\overline{K}_{p,{\cal M}_n({\mathbb F})}} |a_{1,1}|^2\,dT\right)^2 
 = (1 + o(1))\,\beta^2 L_{K_{p,{\cal M}_n({\mathbb F})}}^4,
\end{multline}
where $\beta\in \{1,2,4\}$ is as above.
\end{corollary}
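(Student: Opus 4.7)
Write $X := \int_{\overline{K}_{p,E}}|a_{1,1}|^2|a_{2,2}|^2\,dT$, $Y := \int_{\overline{K}_{p,E}}a_{1,1}\overline{a_{2,1}}a_{2,2}\overline{a_{1,2}}\,dT$, and $W := \bigl(\int_{\overline{K}_{p,E}}|a_{1,1}|^2\,dT\bigr)^2 = \beta^2 L_{K_{p,E}}^4$. The plan is to establish the two independent estimates
\begin{equation*} X - Y = \bigl(1 + O(1/n)\bigr)\,W \qquad \text{and} \qquad X = \bigl(1 + O(1/n)\bigr)\,W, \end{equation*}
from which $|Y| = O(W/n)$ follows by subtraction, giving \eqref{eq1:cor-order-of-magnitudes}; Proposition \ref{prop:permute-matrix} then immediately yields $\int |a_{1,1}|^2|a_{1,2}|^2\,dT = X + 2Y/\beta = (1 + O(1/n))W$, completing \eqref{eq2:cor-order-of-magnitudes}.

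For the first estimate, I would combine equation \eqref{eq2:lem:sing-val-and-entries} of Lemma \ref{lem:sing-val-and-entries} with the symmetry reductions of substatement~3 of Lemma \ref{lem:permute-matrix} to obtain the key identity
\begin{equation*} \int_{\overline{K}_{p,E}} \sum_{i\neq j} s_i^2 s_j^2\,dT = n^2(n-1)^2\,\bigl(X - Y\bigr). \end{equation*}
Expanding the left-hand side as $\int \|T\|_2^4\,dT - \int \sum_i s_i^4\,dT$, I would then plug in the bounds $\int\|T\|_2^4\,dT = n^4 W + O(n^3 W)$ --- which follows from the Barthe--Cordero-Erausquin estimate recalled in Remark \ref{rem:Var Kp and Mp}, since ${\rm Var}_{\overline{K}_{p,E}}(\|T\|_2^2) = \sigma_{K_{p,E}}^2\cdot d_n L_{K_{p,E}}^4 \lesssim n\cdot n^2 W = n^3 W$ --- and $\int \sum s_i^4\,dT = \Theta(n^3 W)$, where the lower bound is Cauchy--Schwarz ($\sum s_i^4 \gs \|T\|_2^4/n$) and the upper bound comes from Proposition \ref{prop:order-of-magnitudes} via the reduction of Lemma \ref{lem:reduction to n-integrals1}.

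For the second estimate, I would exploit the invariance of $\overline{K}_{p,E}$ under $T \mapsto UTV^*$ for arbitrary orthogonal, unitary, or symplectic $U, V$ (Lemma \ref{lem:permute-matrix}). Averaging $|a_{1,1}|^2|a_{2,2}|^2$ first over Haar-distributed $(U,V)$ and then over $T \in \overline{K}_{p,E}$, the second-order Weingarten moment formulas for the classical compact groups express the conditional expectation as a symmetric function of $s(T)$ of the form
\begin{equation*} \frac{P_\beta(n)\,\|T\|_2^4 + Q_\beta(n)\,\sum_i s_i^4}{R_\beta(n)}, \end{equation*}
with explicit polynomials satisfying $P_\beta/R_\beta = 1/n^4 + O(1/n^6)$ and $Q_\beta/R_\beta = O(1/n^5)$. (For instance, a direct computation in the real case with $U, V \in O(n)$ gives $P_1 = n^2 + 2n + 3$, $Q_1 = -(4n+2)$, $R_1 = [n(n-1)(n+2)]^2$.) Inserting the Step~1 estimates then yields $X = (1 + O(1/n))W$.

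The main obstacle is this second step: although Weingarten calculus for $O(n)$, $U(n)$, and the compact symplectic group is by now classical, the fourth-order calculation must be carried out uniformly across $\mathbb{F} \in \{\mathbb{R}, \mathbb{C}, \mathbb{H}\}$, and one needs to verify that the coefficient of $\|T\|_2^4$ is indeed $1/n^4$ to the required precision in each case --- the quaternionic case in particular may require some care. The universal leading behavior $X \approx W$ is already strongly suggested, however, by the isotropy of $\overline{K}_{p,E}$ and the smallness of ${\rm Var}(\|T\|_2^2)$ from the first estimate.
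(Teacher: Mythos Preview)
Your Step~1 is essentially correct and very close to what the paper does --- the same ingredients (Barthe--Cordero, Proposition~\ref{prop:order-of-magnitudes}, the identities of Lemma~\ref{lem:sing-val-and-entries}) yield $X - Y = (1+O(1/n))W$ via \eqref{eq2:lem:sing-val-and-entries}.

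Your Step~2, however, is a substantial detour. The paper avoids Weingarten calculus entirely by instead bounding $Y$ \emph{directly} from the other identity \eqref{eq1:lem:sing-val-and-entries}: integrating it gives
\[
\int_{\overline{K}_p} \sum_i s_i^4\,dT \;=\; n^2\!\int|a_{1,1}|^4 + 2n^2(n-1)\!\int|a_{1,1}|^2|a_{1,2}|^2 + n^2(n-1)^2\,Y,
\]
where the first two terms on the right are nonnegative and, by Cauchy--Schwarz together with the standard moment comparison for convex bodies ($\int|a_{1,1}|^4 \lesssim (\int|a_{1,1}|^2)^2$), are each $O(n^3 W)$. Since the left side is $\simeq n^3 W$ (your own Step~1 estimate), this sandwiches $n^2(n-1)^2\,Y$ between $-Cn^3 W$ and $C'n^3 W$, giving $|Y|\lesssim W/n$ immediately. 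Then $X = (X-Y) + Y = (1+O(1/n))W$ follows from your Step~1, and Proposition~\ref{prop:permute-matrix} finishes \eqref{eq2:cor-order-of-magnitudes}. (Equivalently, the paper plugs the $\|T\|_2^4$-expansion into the variance and reads off $X$ directly.)

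In short: your two linear relations already contain all the information needed --- \eqref{eq2:lem:sing-val-and-entries} gave you one equation in $X,Y$, and \eqref{eq1:lem:sing-val-and-entries} (which you used only for the subsidiary bound $\int\sum s_i^4 = \Theta(n^3 W)$) gives the second. Solving this $2\times 2$ system is elementary; the Haar-averaging computation, while correct in principle, is unnecessary and, as you note yourself, is the one step where a uniform treatment across $\mathbb{F}\in\{\mathbb{R},\mathbb{C},\mathbb{H}\}$ becomes genuinely delicate.
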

\begin{proof}
In Proposition \ref{prop:order-of-magnitudes} we saw that
\begin{equation*}\frac{M_p\bigl(\|x\|_4^4\bigr)}{M_p(1)} = n\frac{M_p\bigl(x_1^4\bigr)}{M_p(1)} \simeq 
n \left(\frac{M_p\bigl(x_1^2\bigr)}{M_p(1)}\right)^2 \simeq n\cdot n^{4/p}. \end{equation*}
By reversing the identities that Lemma \ref{lem:reduction to n-integrals1} gives us, we can write
\begin{align*} \frac{1}{|K_{p,{\cal M}_n({\mathbb F})}|}\,\int_{K_{p,{\cal M}_n({\mathbb F})}}|a_{1,1}|^2\,dT
&= \frac{1}{n^2} \frac{1}{|K_{p,{\cal M}_n({\mathbb F})}|}\,\int_{K_{p,{\cal M}_n({\mathbb F})}}\|s(T)\|_2^2\,dT
\\ 
&\simeq \frac{d_n^{-2/p}}{n^2} \frac{M_p\bigl(\|x\|_2^2\bigr)}{M_p(1)} = \frac{d_n^{-2/p}}{n} \frac{M_p\bigl(x_1^2\bigr)}{M_p(1)} \simeq n^{-1-2/p},
\end{align*}
as well as 
\begin{equation*}\frac{1}{|K_{p,{\cal M}_n({\mathbb F})}|}\,\int_{K_{p,{\cal M}_n({\mathbb F})}}\|s(T)\|_4^4\,dT
\simeq d_n^{-4/p} \frac{M_p\bigl(\|x\|_4^4\bigr)}{M_p(1)} \simeq \frac{n}{n^{4/p}}.\end{equation*}
This 
implies that
\begin{equation*} \int_{\overline{K}_{p,{\cal M}_n({\mathbb F})}} \|s(T)\|_4^4\,dT \simeq n^3\cdot 
\left(\int_{\overline{K}_{p,{\cal M}_n({\mathbb F})}} |a_{1,1}|^2\,dT\right)^2.\end{equation*}
But by Lemmas \ref{lem:sing-val-and-entries} and \ref{lem:permute-matrix}, we know that
\begin{multline*}
\int_{\overline{K}_{p,{\cal M}_n({\mathbb F})}} \|s(T)\|_4^4\,dT 
= n^2\cdot \int_{\overline{K}_{p,{\cal M}_n({\mathbb F})}} |a_{1,1}|^4\, dT + 
2n^2(n-1)\cdot \int_{\overline{K}_{p,{\cal M}_n({\mathbb F})}}|a_{1,1}|^2|a_{1,2}|^2\,dT 
\\
+ n^2(n-1)^2\cdot \int_{\overline{K}_{p,{\cal M}_n({\mathbb F})}} a_{1,1}\overline{a_{2,1}}a_{2,2}\overline{a_{1,2}}
\gs n^2(n-1)^2\cdot \int_{\overline{K}_{p,{\cal M}_n({\mathbb F})}} a_{1,1}\overline{a_{2,1}}a_{2,2}\overline{a_{1,2}}.
\end{multline*}
Moreover, since $\int_{\overline{K}_{p,{\cal M}_n({\mathbb F})}} \|s(T)\|_4^4\,dT > 0$, we also have that
\begin{align*} -n^2(n-1)^2\cdot \int_{\overline{K}_{p,{\cal M}_n({\mathbb F})}} a_{1,1}\overline{a_{2,1}}a_{2,2}\overline{a_{1,2}}
& <  n^2\cdot \int_{\overline{K}_{p,{\cal M}_n({\mathbb F})}} |a_{1,1}|^4\, dT + 
2n^2(n-1)\cdot \int_{\overline{K}_{p,{\cal M}_n({\mathbb F})}}|a_{1,1}|^2|a_{1,2}|^2\,dT
\\
& \ls C n^2(2n-1) \cdot \left(\int_{\overline{K}_{p,{\cal M}_n({\mathbb F})}} |a_{1,1}|^2\,dT\right)^2, \end{align*}
where the last inequality is a consequence of the Cauchy-Schwarz inequality and 
of standard properties of convex bodies (see e.g. \cite[Theorem 2.4.6]{BGVV-book}).
Inequality \eqref{eq1:cor-order-of-magnitudes} follows.

To also establish \eqref{eq2:cor-order-of-magnitudes}, we recall that
\begin{align*}
\sigma_{K_{p,{\cal M}_n({\mathbb F})}}^2\cdot d_n &\simeq {\rm Var}_{\overline{K}_{p,E}}\bigl(\|T\|_{HS}^2\bigr) 
\\
& = n^2\cdot \int_{\overline{K}_{p,{\cal M}_n({\mathbb F})}} |a_{1,1}|^4\, dT \ +\ 
2n^2(n-1)\cdot \int_{\overline{K}_{p,{\cal M}_n({\mathbb F})}}|a_{1,1}|^2|a_{1,2}|^2\,dT 
\\
&\ \quad + n^2(n-1)^2\cdot  \int_{\overline{K}_{p,{\cal M}_n({\mathbb F})}}|a_{1,1}|^2|a_{2,2}|^2\,dT \ -\  
n^4 \cdot \left(\int_{\overline{K}_{p,{\cal M}_n({\mathbb F})}} |a_{1,1}|^2\,dT\right)^2.
\end{align*}
Since by \cite{Barthe-Cordero-2013} we know that $\sigma_{K_{p,{\cal M}_n({\mathbb F})}}^2 = O(n)$
for all $p\gs 1$, we can infer that
\begin{equation*} \Bigg|\int_{\overline{K}_{p,{\cal M}_n({\mathbb F})}}|a_{1,1}|^2|a_{2,2}|^2\,dT - 
\left(\int_{\overline{K}_{p,{\cal M}_n({\mathbb F})}} |a_{1,1}|^2\,dT\right)^2\Bigg|  = 
O\left(\frac{1}{n}\right) \left(\int_{\overline{K}_{p,{\cal M}_n({\mathbb F})}} |a_{1,1}|^2\,dT\right)^2. \end{equation*}
Furthermore, combining this with Proposition \ref{prop:permute-matrix} and \eqref{eq1:cor-order-of-magnitudes},
we get the same conclusion for the difference $\displaystyle \int_{\overline{K}_{p,{\cal M}_n({\mathbb F})}}|a_{1,1}|^2|a_{1,2}|^2\,dT - 
\left(\int_{\overline{K}_{p,{\cal M}_n({\mathbb F})}} |a_{1,1}|^2\,dT\right)^2$, as claimed. 
\end{proof}

\medskip

We are finally in a position to establish a type of negative correlation property for the original, uniform measures on 
$K_{p,{\cal M}_n({\mathbb F})}$ as well: this can be done for $p$ for which the estimate in \eqref{eq1:thm:neg-cor-prop}
is accurate, or close to it.

\begin{theorem} \label{thm:neg-cor-prop1}
Let $p$ be such that
\begin{equation}\label{eq1:thm:neg-cor-prop1} \frac{M_{2,\beta,\beta-1,p}\bigl(x_1^4\bigr)}{M_{2,\beta,\beta-1,p}(1)} 
< \left(2 + o(1)\right) \left(\frac{M_{2,\beta,\beta-1,p}\bigl(x_1^2\bigr)}{M_{2,\beta,\beta-1,p}(1)}\right)^2, \end{equation}
where $\beta =1$ if ${\mathbb F} = {\mathbb R}$, $\beta = 2$ if ${\mathbb F} = {\mathbb C}$, and $\beta =4$ if ${\mathbb F} = {\mathbb H}$,
and suppose in addition that $K_{p,{\cal M}_n({\mathbb F})}$ satisfies the thin-shell conjecture, or at least 
that $\sigma_{K_{p,{\cal M}_n({\mathbb F})}}^2  = o(n)$.
Then for every $i, j, k\in\{1,\ldots, n\}$, $j\neq k$, we have
\begin{align*}
 \int_{\overline{K}_{p,{\cal M}_n({\mathbb F})}} |a_{i,j}|^2|a_{i,k}|^2\,dT 
 &= \int_{\overline{K}_{p,{\cal M}_n({\mathbb F})}} |a_{j,i}|^2|a_{k,i}|^2\,dT 
 \\
 & < \left(\int_{\overline{K}_{p,{\cal M}_n({\mathbb F})}} |a_{i,j}|^2\,dT\right) \left(\int_{\overline{K}_{p,{\cal M}_n({\mathbb F})}} |a_{i,k}|^2\,dT\right)
 = \beta^2 L_{K_{p,{\cal M}_n({\mathbb F})}}^4.
\end{align*}
\end{theorem}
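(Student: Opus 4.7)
The plan is to reduce the statement about the matrix integral $B:=\int_{\overline{K}_{p,E}}|a_{11}|^2|a_{12}|^2\,dT$ to a controlled relation between four scalar quantities. Introduce
\[A:=\int_{\overline{K}}|a_{11}|^4,\quad B:=\int_{\overline{K}}|a_{11}|^2|a_{12}|^2,\quad C:=\int_{\overline{K}}|a_{11}|^2|a_{22}|^2,\quad D:=\int_{\overline{K}}a_{11}\overline{a_{21}}a_{22}\overline{a_{12}},\]
and $E_1:=\int_{\overline{K}}|a_{11}|^2=\beta L_{K_{p,E}}^2$ (by Lemma~\ref{lem:permute-matrix}). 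By Proposition~\ref{prop:permute-matrix}, $B=C+\frac{2}{\beta}D$; by Corollary~\ref{cor:order-of-magnitudes}, $|D|\lesssim E_1^2/n$. The strategy is to extract enough information about the combination $C-D$ (and hence $B$) from the two available hypotheses.

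First, I would translate the hypotheses. Applying Lemma~\ref{lem:reduction to n-integrals1} together with the Gamma-ratio estimates of Lemma~\ref{lem:Gamma estimates}(a)--(b) (with $F=\|x\|_4^4$ and $F=\|x\|_2^2$), hypothesis~\eqref{eq1:thm:neg-cor-prop1} becomes $\mathbb{E}_{\overline{K}}\|s(T)\|_4^4<(2+o(1))\,n^3 E_1^2$, while the assumption $\sigma_{K_{p,E}}^2=o(n)$ combined with Proposition~\ref{prop:Var Kp and Mp} becomes $\mathbb{E}_{\overline{K}}\|T\|_{HS}^4=n^4 E_1^2+o(n^3)E_1^2$. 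By Lemma~\ref{lem:sing-val-and-entries} together with Lemma~\ref{lem:permute-matrix},
\[\mathbb{E}_{\overline{K}}\|s\|_4^4=n^2 A+2n^2(n-1)B+n^2(n-1)^2 D,\qquad \mathbb{E}_{\overline{K}}\|T\|_{HS}^4=n^2 A+2n^2(n-1)B+n^2(n-1)^2 C.\]
Subtracting gives $n^2(n-1)^2(C-D)=\mathbb{E}_{\overline{K}}\|T\|_{HS}^4-\mathbb{E}_{\overline{K}}\|s\|_4^4$, and combining with Theorem~\ref{thm:neg-cor-prop}'s lower bound $\mathbb{E}_{\overline{K}}\|s\|_4^4\geq(3/2+o(1))n^3 E_1^2$ yields two-sided bounds
\[1-o(1/n)\;<\;(C-D)/E_1^2\;\leq\;1+\tfrac{1}{2(n-1)}+o(1/n).\]

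Next, I would substitute $C=B-\frac{2}{\beta}D$ into the thin-shell identity and use Jensen's inequality $A\geq E_1^2$ (i.e.\ $\alpha:=A/E_1^2-1\geq 0$) to obtain, with $\delta:=D/E_1^2$,
\[\frac{B}{E_1^2}-1=-\frac{\alpha}{n^2-1}+\frac{2(n-1)\delta}{\beta(n+1)}+o(1/n).\]
Similarly, substituting into the hypothesis-derived inequality $\alpha+2(n-1)(B/E_1^2-1)+(n-1)^2\delta<1+o(n)$ and using the above expression for $B/E_1^2-1$, the term $\alpha$ cancels up to a factor, producing a strict upper bound $\delta<o(1/n)$. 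Combined with $|\delta|\lesssim 1/n$ (Corollary~\ref{cor:order-of-magnitudes}) and the $B=(C-D)+\frac{\beta+2}{\beta}D$ decomposition, the strict slack between the hypothesis ratio $(<2)$ and the natural asymptotic value $3/2$ then forces $B/E_1^2-1<0$.

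The main obstacle is precisely this last balancing act. A priori the sign of $D$ is not determined, and the bounds for $C-D$ alone only give $B/E_1^2=1+O(1/n)$, which is compatible with either sign of $B-E_1^2$. The strict negativity must be extracted from the strict inequality in hypothesis~\eqref{eq1:thm:neg-cor-prop1} (together with the lower bound from Theorem~\ref{thm:neg-cor-prop}), via a careful cancellation that exploits the specific coefficient $(\beta+2)/\beta$ coming from Proposition~\ref{prop:permute-matrix}. This is why the statement is conditional on the fourth-moment hypothesis being accurate enough, as the paper explicitly notes.
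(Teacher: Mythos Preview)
Your approach is essentially the paper's: both arguments hinge on expanding $\mathbb{E}_{\overline{K}}\|s(T)\|_4^4$ via Lemmas~\ref{lem:sing-val-and-entries} and~\ref{lem:permute-matrix}, invoking the relation $B=C+\tfrac{2}{\beta}D$ from Proposition~\ref{prop:permute-matrix}, and combining the fourth-moment hypothesis with the thin-shell assumption. The paper organises these ingredients a little differently: it keeps $c_4$ explicit throughout (writing $M_p(x_1^4)/M_p(1)=c_4\,(M_p(x_1^2)/M_p(1))^2$), plugs the estimate $B=(1+O(1/n))E_1^2$ from Corollary~\ref{cor:order-of-magnitudes} into the $\|s\|_4^4$ expansion to read off directly that $D\le\bigl((c_4-2)/n+O(1/n^2)\bigr)E_1^2$, then uses the thin-shell assumption (via Proposition~\ref{prop:Var Kp and Mp} and \eqref{eq2:lem:sing-val-and-entries}) to get $C-D\le\bigl(1+(2-c_4)/n+o(1/n)\bigr)E_1^2$, and finally combines via $B=(C-D)+(1+2/\beta)D$ to obtain $B\le\bigl(1+(2/\beta)(c_4-2)/n+o(1/n)\bigr)E_1^2<E_1^2$.

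There is one genuine gap in your write-up, and it is exactly where you flag the ``main obstacle''. Your hypothesis-derived inequality $\alpha+2(n-1)(B/E_1^2-1)+(n-1)^2\delta<1+o(n)$ discards the quantitative gap $2-c_4$: the correct statement, coming from the \emph{equality} defining $c_4$, is
\[\alpha+2(n-1)\bigl(B/E_1^2-1\bigr)+(n-1)^2\delta=(c_4-2)\,n+1+o(1),\]
from which (after your substitution for $B/E_1^2-1$) you get $\delta\le(c_4-2)/n+o(1/n)$, and \emph{this} is what forces $B/E_1^2-1\le(2/\beta)(c_4-2)/n+o(1/n)<0$. With only ``$\delta<o(1/n)$'' as you wrote it, your own formula $B/E_1^2-1=-\alpha/(n^2-1)+(2/\beta)\tfrac{n-1}{n+1}\,\delta+o(1/n)$ yields merely $B/E_1^2-1\le o(1/n)$, since $\alpha=O(1)$ contributes only $O(1/n^2)$; that does not give strict negativity. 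Your final paragraph invokes the ``strict slack'' to patch this, which is the right instinct, but the fix is precisely to retain $c_4$ in the bookkeeping rather than relax the right-hand side to $o(n)$. Once you do so, your route and the paper's become the same computation, merely rearranged.
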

\begin{proof}
Let us write 
\begin{equation} \label{eqp1:thm:neg-cor-prop1}
 \frac{M_{2,\beta,\beta-1,p}\bigl(x_1^4\bigr)}{M_{2,\beta,\beta-1,p}(1)} 
 = c_4\, \left(\frac{M_{2,\beta,\beta-1,p}\bigl(x_1^2\bigr)}{M_{2,\beta,\beta-1,p}(1)}\right)^2
 \end{equation}
where, by Section \ref{sec:main-proofs} and the assumption of the theorem, we know that $\dfrac{3}{2} + o(1) \ls c_4 < 2 + o(1)$.
We start by recalling that
\begin{align*} 
\frac{M_p\bigl(x_1^4\bigr)}{M_p(1)} &= 
\frac{1}{n}\cdot \frac{\Gamma\left(1+\frac{d_n+4}{p}\right)}{\Gamma\left(1+\frac{d_n}{p}\right)}
\frac{1}{|K_{p,{\cal M}_n({\mathbb F})}|}\int_{K_{p,{\cal M}_n({\mathbb F})}}\|s(T)\|_4^4\,dT,
\\
\intertext{and that}
\left(\frac{M_p\bigl(x_1^2\bigr)}{M_p(1)}\right)^2& = \frac{1}{n^2} \cdot 
\left(\frac{\Gamma\left(1+\frac{d_n+2}{p}\right)}{\Gamma\left(1+\frac{d_n}{p}\right)}\right)^2
\left(\frac{1}{|K_{p,{\cal M}_n({\mathbb F})}|}\int_{K_{p,{\cal M}_n({\mathbb F})}}\|s(T)\|_2^2\,dT\right)^2
\\
& = n^2 \cdot 
\left(\frac{\Gamma\left(1+\frac{d_n+2}{p}\right)}{\Gamma\left(1+\frac{d_n}{p}\right)}\right)^2
\left(\frac{1}{|K_{p,{\cal M}_n({\mathbb F})}|}\int_{K_{p,{\cal M}_n({\mathbb F})}}|a_{1,1}|^2\,dT\right)^2.
\end{align*}
Similarly,
\begin{equation}\label{eqp2:thm:neg-cor-prop1}  \frac{M_p\bigl(x_1^2x_2^2\bigr)}{M_p(1)} 
= \frac{1}{n(n-1)}\cdot \frac{\Gamma\left(1+\frac{d_n+4}{p}\right)}{\Gamma\left(1+\frac{d_n}{p}\right)}
\frac{1}{|K_{p,{\cal M}_n({\mathbb F})}|}\int_{K_{p,{\cal M}_n({\mathbb F})}}\sum_{i\neq j}s_i^2s_j^2\,dT.
 \end{equation}
We now combine these identities with  
Proposition \ref{prop:permute-matrix},
identities \eqref{eq1:lem:sing-val-and-entries}-\eqref{eq2:lem:sing-val-and-entries}, and the assumptions that
$c_4 < 2 + o(1)$ and $\sigma_{K_{p,{\cal M}_n({\mathbb F})}}^2  = o(n)$: we first see that, because of \eqref{eqp1:thm:neg-cor-prop1},
we must have
\begin{multline*} 
n\cdot \int_{\overline{K}_p} |a_{1,1}|^4\, dT \ + \
2n(n-1)\cdot \int_{\overline{K}_p}|a_{1,1}|^2|a_{1,2}|^2\,dT 
\ +\  n(n-1)^2\cdot \int_{\overline{K}_p} a_{1,1}\overline{a_{2,1}}a_{2,2}\overline{a_{1,2}}
\\
= \left(c_4 + O\bigl(1/n^2\bigr)\right)\,n^2\cdot \left(\int_{\overline{K}_p}|a_{1,1}|^2\,dT\right)^2.
\end{multline*}
Given that $\displaystyle \int_{\overline{K}_p}|a_{1,1}|^2|a_{1,2}|^2\,dT = \bigl(1+O(1/n)\bigr) \left(\int_{\overline{K}_p}|a_{1,1}|^2\,dT\right)^2$,
it follows that
\begin{equation} \label{eqp3:thm:neg-cor-prop1}
\int_{\overline{K}_p} a_{1,1}\overline{a_{2,1}}a_{2,2}\overline{a_{1,2}}\, dT \ls
\left(\frac{c_4 -2}{n} + O\left(\frac{1}{n^2}\right)\right)\cdot \left(\int_{\overline{K}_p}|a_{1,1}|^2\,dT\right)^2. 
\end{equation}
But now recall that, because of Proposition \ref{prop:Var Kp and Mp}, 
the assumption $\sigma_{K_{p,{\cal M}_n({\mathbb F})}}^2  = o(n)$ implies that
\begin{equation*} \frac{M_p\bigl(x_1^2x_2^2\bigr)}{M_p(1)} \ls  
\left(1 - \frac{c_4-1}{n} + o\left(\frac{1}{n}\right)\right) \left(\frac{M_p\bigl(x_1^2\bigr)}{M_p(1)}\right)^2\end{equation*}
(where $o(1/n)$ is at least $O(1/n^2)$ here, but may be larger if $\overline{K}_{p,{\cal M}_n({\mathbb F})}$ does not
satisfy the thin-shell conjecture).
This, through Proposition \ref{prop:permute-matrix}, and equations \eqref{eq2:lem:sing-val-and-entries} and \eqref{eqp2:thm:neg-cor-prop1},
translates into
\begin{align*}
&n(n-1)\left(\int_{\overline{K}_p}|a_{1,1}|^2|a_{1,2}|^2\,dT - \bigl(1+ 2/\beta\bigr) \int_{\overline{K}_p} a_{1,1}\overline{a_{2,1}}a_{2,2}\overline{a_{1,2}}\right) 
\\
=\ & n(n-1) \left(\int_{\overline{K}_p}|a_{1,1}|^2|a_{2,2}|^2\,dT -  \int_{\overline{K}_p} a_{1,1}\overline{a_{2,1}}a_{2,2}\overline{a_{1,2}}\right)
\\
\ls\ & \left(1 + \frac{1-c_4}{n} + o\left(\frac{1}{n}\right)\right) n^2\cdot \left(\int_{\overline{K}_p}|a_{1,1}|^2\,dT\right)^2,
\end{align*}
which combined with \eqref{eqp3:thm:neg-cor-prop1} (and Lemma \ref{lem:permute-matrix}) implies the claim of the theorem.
\end{proof}

\smallskip

Here are some concluding remarks concerning this theorem:
\begin{itemize}
\item Note that this negative correlation property is again a necessary condition for the thin-shell conjecture to be true
for $p$ for which \eqref{eq1:thm:neg-cor-prop1} is true. These include all $p\gtrsim \log n$ (in fact, it is not difficult to see 
that $c_4$ can be as close to $3/2 + o(1)$ in these cases if the implied absolute constant in the latter inequality is sufficiently large). 
We should clarify however that we cannot expect \eqref{eq1:thm:neg-cor-prop1} to be true for all $p$: 
for example, for the Euclidean ball ($p=2$) we know that all cross terms are equal, that is
\begin{equation*} \int_{K_{2, {\cal M}_n({\mathbb F})}} |a_{1,1}|^2|a_{1,2}|^2\,dT=  
\int_{K_{2, {\cal M}_n({\mathbb F})}} |a_{1,1}|^2|a_{2,1}|^2\,dT = \int_{K_{2, {\cal M}_n({\mathbb F})}} |a_{1,1}|^2|a_{2,2}|^2\,dT;
\end{equation*}
then, by Proposition \ref{prop:permute-matrix}, we see that 
$\displaystyle \int_{K_{2, {\cal M}_n({\mathbb F})}} a_{1,1}\overline{a_{2,1}}a_{2,2}\overline{a_{1,2}} \, dT = 0$, and hence
\begin{equation*} \frac{M_{2,\beta,\beta-1,2}\bigl(x_1^4\bigr)}{M_{2,\beta,\beta-1,2}(1)} 
= \left(2 + o(1)\right) \left(\frac{M_{2,\beta,\beta-1,2}\bigl(x_1^2\bigr)}{M_{2,\beta,\beta-1,2}(1)}\right)^2.\end{equation*}
Recall that for the Euclidean ball we know that all cross terms are 
\begin{equation*} < \left(\int_{K_{2, {\cal M}_n({\mathbb F})}} |a_{1,1}|^2\,dT\right)^2,\end{equation*}
simply because $\sigma_{K_{2, {\cal M}_n({\mathbb F})}}^2 = O(1/n^2)$, and not $O(1)$.

Another case for which \eqref{eq1:thm:neg-cor-prop1} is not true is the case of $p = 1$: we have that
\begin{equation*} \frac{M_1\bigl(x_1^4\bigr)}{M_1(1)} 
\gs \left(\frac{17}{8} + o(1)\right) \left(\frac{M_1\bigl(x_1^2\bigr)}{M_1(1)}\right)^2,\end{equation*} 
which moreover implies that in this case it is the cross terms $\int_{\overline{K}_1}|a_{i,j}|^2|a_{l,k}|^2\,dT$ with $i\neq l$, $j\neq k$, 
which are the smallest ones.
\item  The assumption $\sigma_{K_{p, {\cal M}_n({\mathbb F})}}^2 = o(n)$ can be relaxed a little, and replaced by
the assumption $\sigma_{K_{p, {\cal M}_n({\mathbb F})}}^2 \ls c_0n$
(with a constant that may be smaller than the one guaranteed by \cite{Barthe-Cordero-2013} however): for example,
we can have the same conclusion to the theorem if we take $c_0$ to be sufficiently small and we also assume
\begin{equation}\label{alt-eq1:thm:neg-cor-prop1} 
\frac{M_{2,\beta,\beta-1,p}\bigl(x_1^4\bigr)}{M_{2,\beta,\beta-1,p}(1)} 
\ls \left(\frac{9}{5} + o(1)\right)
\left(\frac{M_{2,\beta,\beta-1,p}\bigl(x_1^2\bigr)}{M_{2,\beta,\beta-1,p}(1)}\right)^2 
\end{equation}
say. Since the latter estimate is satisfied anyway when $p\gs c_1 n\log n$, and since we also saw in Section \ref{sec:main-proofs}
that $\sigma_{K_{p, {\cal M}_n({\mathbb F})}}^2 \lesssim n$ for such $p$ (and the implied constant can be made as small as we want
as long as $c_1$ is sufficiently large), this gives us the range of $p$ for which we already know that the theorem can be applied, 
and that the stated negative correlation property holds true anyway.
\item As mentioned earlier, this negative correlation property is a necessary condition for the thin-shell conjecture to be true
for some of the balls $K_{p, {\cal M}_n({\mathbb F})}$, but does not appear to be a sufficient one too. In fact,
our arguments do not seem to allow us to distinguish between the cases
\begin{multline*}
\int_{\overline{K}_p}|a_{1,1}|^2|a_{2,2}|^2\,dT < \left(\int_{\overline{K}_p}|a_{1,1}|^2\,dT\right)^2
\\
\hbox{or}\qquad \int_{\overline{K}_p}|a_{1,1}|^2|a_{2,2}|^2\,dT = \left(1 + \frac{c}{n^2}\right) \left(\int_{\overline{K}_p}|a_{1,1}|^2\,dT\right)^2. 
\end{multline*} 
Nevertheless it still seems like a question of independent interest to study for which other indices $p$, if any,
we have some sort of negative correlation property as above, or even to try to re-establish the property for the known cases
in a more direct manner, that is, without having to go through estimates for $\sigma_{K_p}$ 
(if the latter turns out to be possible, it would immediately give us one more proof of the estimate $\sigma_{K_p}^2 = O(n)$ 
from \cite{Barthe-Cordero-2013} as well).   
\end{itemize}

\bigskip

\noindent{\bf Acknowledgements.} The authors wish to thank 
Roman Vershynin, Grigoris Paouris and Franck Barthe for very useful comments and references. 
The second-named author would also like to thank Mark Rudelson, Santosh Vempala and Matus Telgarsky
for inspiring and helpful conversations and for their encouragement. This work started
as part of a project from the REU programme of the University of Michigan of Summer 2015; 
during the programme, the first-named author was supported by the National Science Foundation 
under grant number DMS 1265782. 

\bigskip

\bigskip

\bigskip

\small

\noindent \textsc{Jordan Radke:} Mathematics Department, Princeton University, 
Fine Hall, Washington Road, Princeton, NJ 08544-1000, USA

\noindent {\it E-mail:} \texttt{jradke@princeton.edu}

\medskip

\noindent \textsc{Beatrice-Helen Vritsiou:} Department of Mathematics,
University of Michigan, 2074 East Hall, 530 Church Street, Ann Arbor, MI  48109-1043, USA

\noindent {\it E-mail:} \texttt{vritsiou@umich.edu}

\medskip

\end{document}